\theoremstyle{plain}
\newtheorem{theorem}{Theorem}
\newtheorem{corollary}{Corollary}
\newtheorem{lemma}{Lemma}
\newtheorem{proposition}{Proposition}
\theoremstyle{definition}
\theoremstyle{example}
\newtheorem{remark}{Remark}
\theoremstyle{remark}
\numberwithin{equation}{section}
\begin{document}

\title[A generalization of the Brauer algebra]
      {A generalization of the Brauer algebra}
\author{William Y.C. Chen and Christian M. Reidys$^{\,\star}$}
\address{Center for Combinatorics, LPMC-TJKLC 
           \\
         Nankai University  \\
         Tianjin 300071\\
         P.R.~China\\
         Phone: *86-22-2350-6800\\
         Fax:   *86-22-2350-9272}
\email{reidys@nankai.edu.cn}
\thanks{}
\keywords{}
\date{February, 2009}
\begin{abstract}
We study two variations of the Brauer algebra $\mathbb{B}_n(x)$. 
The first is the algebra $\mathbb{A}_n(x)$, which generalizes the
Brauer algebra by considering loops. The second is the algebra 
$\mathbb{L}_n(x)$, the $\mathbb{A}_n(x)$-subalgebra generated by 
diagrams without horizontal arcs. 
$\mathbb{A}_n(x)$ and $\mathbb{L}_n(x)$ exhibit for $x\neq 0$ an
hereditary-chain indexed by all integers. Following the ideas of
Martin~\cite{Martin} in the context of the partition algebra, and
Doran {\it et al.} \cite{Doran} for the Brauer algebra, we study
semisimplicity of $\mathbb{A}_n(x)$ using restriction and induction 
in $\mathbb{A}_n(x)$ and $\mathbb{L}_n(x)$. 
Our main result is that $\mathbb{A}_n(x)$ is semisimple if 
$x\not\in\mathbb{Z}$ and that $\mathbb{L}_n(x)$ is semisimple if 
$x\neq 0$.
\end{abstract}
\maketitle
{{\small
}}

\section{Introduction}\label{S:Introduction}

In this paper, we study the semisimplicity of the two diagram algebras 
$\mathbb{A}_{n}(x)$ and $\mathbb{L}_n(x)$. $\mathbb{A}_n(x)$ generalizes 
the Brauer algebra, $\mathbb{B}_n(x)$, by containing diagrams in which 
vertices can be incident to loops (or equivalently, isolated vertices). 
$\mathbb{L}_n(x)$ is the $\mathbb{A}_n(x)$-subalgebra generated by all 
diagrams without any horizontal arcs.
The motivation for considering these algebras is twofold: on the one hand 
in the context of Schur-Weyl duality: $\mathbb{A}_n(x)$ 
is the centralizer algebra of the group of stochastic, orthogonal matrices 
and $\mathbb{L}_n(x)$ is the centralizer algebra of the group of stochastic, 
invertible matrices.
On the other hand, $\mathbb{A}_n(x)$ is as the algebra of partial matchings 
of importance for RNA pseudoknot structures, i.e.~helical configurations of 
RNA primary sequences with cross-serial nucleotide interactions 
\cite{Reidys:07pseu}.

The Brauer (centralizer) algebras \cite{Br} over the field $F=K(x)$,
denoted by $\mathbb{B}_{n}(x)$, are finite dimensional $F$-algebras 
indexed by a positive integer $n$ and $x$, which is either algebraic 
or transcendent over $K$. $\mathbb{B}_{n}(x)$ is the centralizer
algebra for the orthogonal or symplectic group on the
$n$th tensor powers of the natural representation. 
$\mathbb{B}_{n}(x)$ has been studied by various authors mainly using
combinatorial methods, see \cite{BE,Bw,EK,HW1,HW2,HW3} and
\cite{Sundaram}. Hanlon and Wales conjectured that $\mathbb{B}_{n}(x)$ 
is semisimple for all
$x\not\in\mathbb{Z}$ \cite{HW1}. Their conjecture was proved by
Wenzl~{\cite{Wen}} and Rui~\cite{Rui1} gave necessary and sufficient
conditions for the Brauer algebras to be semisimple. 

The analysis presented here is based on the concepts of Martin~\cite{Martin} 
developed in the context of the partition algebra, $\mathbb{P}_n$. 
Martin's key idea was to relate the 
existence of certain embeddings to semisimplicity. 
Subsequently, Doran {\it et al.}~{\cite{Doran}} used this framework in order
to offer an alternative to Wenzel's proof of semisimplicity. 
Wenzl's inductive construction hinges on an interpretation of a key ideal 
in $\mathbb{B}_{n}(x)$ as the tensor product $\mathbb{B}_{n-1}(x)
\otimes_{\mathbb{B}_{n-2}(x)}\mathbb{B}_{n}(x)$ \cite{Jones} and the 
nondegeneracy of a Markov-trace arising naturally in the construction of 
the latter. The nondegeneracy of this trace form is a result of Weyl's 
character formulas and is in this sense somewhat ``unsatisfactory''. The 
work of Martin~\cite{Martin} and Doran {\it et al.}~{\cite{Doran}} puts 
semisimplicity in the context of quasi-hereditarity and allows to avoid
the use of Markov-traces.

Let $\mathscr{A}_n$ be the set of partial $1$-factors over $2n$ vertices,
i.e.~graphs over $2n$ vertices in which each vertex has either degree one 
zero. We refer to $\mathscr{A}_n$-elements as diagrams and represent them 
by arranging the $2n$ vertices in two rows, each containing $n$ vertices, 
with the rows arranged one on top of the other. Furthermore, we equip each 
isolated vertex with a loop. The $n$ 
top-vertices are labeled by $[n]=\{1,\dots,n\}$ in increasing order and 
the $n$ bottom-vertices are labeled by
$[n']=\{1',\dots,n'\}$. 
Let $\mathscr{L}_n\subset\mathscr{A}_n$
be the subset of all $\mathscr{A}_n$-diagrams without any horizontal
arcs. We denote the subset of $\mathscr{L}_n$-diagrams having only vertical 
arcs by $\mathscr{S}_{n}$.
When drawing diagrams, we oftentimes omit
vertex labels. For instance,
\begin{center}
\begin{picture}(390,0)
\put(0,0){\circle*{5}} \put(16,0){\circle*{5}}
\put(32,0){\circle*{5}} \put(48,0){\circle*{5}}
\put(64,0){\circle*{5}}\put(80,0){\circle*{5}}
\put(150,0){\circle*{5}} \put(166,0){\circle*{5}}
\put(182,0){\circle*{5}} \put(198,0){\circle*{5}}
\put(214,0){\circle*{5}}\put(230,0){\circle*{5}}
\put(300,0){\circle*{5}}\put(316,0){\circle*{5}}
\put(332,0){\circle*{5}} \put(348,0){\circle*{5}}
\put(364,0){\circle*{5}}\put(380,0){\circle*{5}}
\put(0,20){\circle*{5}} \put(16,20){\circle*{5}}
\put(32,20){\circle*{5}} \put(48,20){\circle*{5}}
\put(64,20){\circle*{5}}\put(80,20){\circle*{5}}
\put(150,20){\circle*{5}} \put(166,20){\circle*{5}}
\put(182,20){\circle*{5}} \put(198,20){\circle*{5}}
\put(214,20){\circle*{5}} \put(230,20){\circle*{5}}
\put(300,20){\circle*{5}}\put(316,20){\circle*{5}}
\put(332,20){\circle*{5}} \put(348,20){\circle*{5}}
\put(364,20){\circle*{5}}\put(380,20){\circle*{5}}
\qbezier[400](0,20)(16,40)(32,20) \qbezier[400](0,0)(16,20)(32,0)
\qbezier[400](48,0)(64,20)(80,0)
\qbezier[40](16,0)(8,4)(16,8)\qbezier[40](16,0)(24,4)(16,8)
\qbezier[40](16,20)(8,24)(16,28)\qbezier[40](16,28)(24,24)(16,20)
\qbezier[40](64,20)(56,24)(64,28)\qbezier[40](64,20)(72,24)(64,28)
\qbezier[40](48,20)(40,24)(48,28)\qbezier[40](48,20)(56,25)(48,28)
\qbezier(80,20)(72,10)(64,0)\qbezier(182,20)(174,24)(182,28)
\qbezier(182,20)(190,24)(182,28) \qbezier(230,20)(222,24)(230,28)
\qbezier(230,20)(238,24)(230,28)\qbezier(166,0)(158,4)(166,8)
\qbezier(166,0)(174,4)(166,8)\qbezier(198,0)(190,4)(198,8)
\qbezier(198,0)(206,4)(198,8)\qbezier(150,20)(166,10)(182,0)
\qbezier(166,20)(158,10)(150,0)\qbezier(198,20)(206,10)(214,0)
\qbezier(214,20)(222,10)(230,0)\qbezier(364,20)(332,10)(300,0)
\qbezier(300,20)(316,10)(332,0)\qbezier(316,20)(332,10)(348,0)
\qbezier(348,20)(364,10)(380,0)\qbezier(332,20)(324,10)(316,0)
\qbezier(380,20)(372,10)(364,0)
\end{picture}
\end{center}
are particular $\mathscr{A}_n$-, $\mathscr{L}_n$- and
$\mathscr{S}_n$-diagrams. By abuse of notation, we write
$S_n$ instead of $\mathscr{S}_n$, identifying $S_n$ with its embedding
into $\mathscr{A}_n$. As for their cardinalities we immediately compute
\begin{equation}
\vert \mathscr{A}_n\vert =\sum_{j=0}^n\binom{2n}{2j}
\prod_{i=0}^{(n-j)-1}(2(n-j)-1-2i)
\quad\text{\rm and}\quad
\vert \mathscr{L}_n\vert =\sum_{j=0}^n\binom{n}{j}\binom{n}{j}\,(n-j)!,
\end{equation}
where the factor $\prod_{i=0}^{(n-j)-1}(2(n-j)-1-2i)$ equals the
dimension of the Brauer-algebra $\mathbb{B}_{n-j}(x)$. Arcs joining two
different vertices, contained both in the top or bottom row are
called horizontal arcs. Arcs joining top- and bottom-vertices are
called vertical arcs. The induced subgraph of the top and bottom row
of a diagram $\mathfrak{a}$ is denoted by $\text{\rm
top}(\mathfrak{a})$ and $\text{\rm bot}(\mathfrak{a})$. Let
$\mathfrak{e}_{i}$ be the diagram having straight verticals except
of the horizontal arcs connecting $i,i+1$ and $i',(i+1)'$,
$\mathfrak{u}_{i}$ having straight verticals and loops at $i$ and
$i'$ and $\mathfrak{g}_{i}$ having straight verticals except of the
vertical arcs $(i,(i+1)')$ and $(i+1,i')$. Pictorially,
\begin{center}
\begin{picture}(20,50)
\put(-37,30){\small{$1$}}\put(-27,30){\small{$\cdots$}}
\put(-10,30){\small{$i-1$}}\put(19,30){\small{$i$}}\put(30,30){\small{$i+1$}}
\put(58,30){\small{$\cdots$}}\put(76,30){\small{$n$}}
\put(-36,0){\circle*{5}} \put(-2,0){\circle*{5}}
\put(20,0){\circle*{5}} \put(42,0){\circle*{5}}
 \put(79,0){\circle*{5}}
\put(-36,20){\circle*{5}} \put(-2,20){\circle*{5}}
\put(20,20){\circle*{5}} \put(42,20){\circle*{5}}
\put(79,20){\circle*{5}}
\put(-37,-12){\small{$1'$}}\put(-27,-12){\small{$\cdots$}}
\put(-14,-12){\small{$(i-1)'$}}\put(19,-12){\small{$i'$}}
\put(28,-12){\small{$(i+1)'$}}
\put(58,-12){\small{$\cdots$}}\put(75,-12){\small{$n'$}}
\put(-65,8){$\mathfrak{u}_{i}$=} \put(-27,8){$\cdots$}
\put(58,8){\small{$\cdots$}}
\qbezier[40](-36,20)(-36,10)(-36,0) \qbezier[50](42,20)(42,10)(42,0)
\qbezier[50](79,0)(79,10)(79,20)\qbezier[40](-2,20)(-2,10)(-2,0)
\qbezier[40](20,0)(12,5)(20,8)\qbezier[40](20,8)(28,5)(20,0)
\qbezier[40](20,20)(12,25)(20,28)\qbezier[40](20,28)(28,25)(20,20)

\put(116,30){\small{$1$}}\put(134,30){\small{$\cdots$}}
\put(152,30){\small{$i$}}\put(166,30){\small{$i+1$}}
\put(188,30){\small{$\cdots$}}\put(206,30){\small{$n$}}
\put(118,0){\circle*{5}} 
\put(154,0){\circle*{5}} \put(172,0){\circle*{5}}
\put(208,0){\circle*{5}}
\put(118,20){\circle*{5}} 
\put(154,20){\circle*{5}} \put(172,20){\circle*{5}}
\put(208,20){\circle*{5}}
\put(116,-12){\small{$1'$}}\put(134,-12){\small{$\cdots$}}
\put(152,-12){\small{$i'$}}\put(164,-12){\small{$(i+1)'$}}
\put(188,-12){\small{$\cdots$}}\put(206,-12){\small{$n'$}}
\put(92,8){$\mathfrak{g}_{i}$=}\put(134,8){$\cdots$}
\put(188,8){\small{$\cdots$}}
\qbezier[40](118,20)(118,10)(118,0)
\qbezier[50](154,0)(163,10)(172,20)
\qbezier[50](172,0)(163,10)(154,20)\qbezier[50](207,0)(207,10)(207,20)

\put(-177,30){\small{$1$}}\put(-159,30){\small{$\cdots$}}
\put(-141,30){\small{$i$}}\put(-128,30){\small{$i+1$}}
\put(-105,30){\small{$\cdots$}}\put(-87,30){\small{$n$}}
\put(-175,0){\circle*{5}} 
\put(-139,0){\circle*{5}} \put(-121,0){\circle*{5}}
\put(-85,0){\circle*{5}}
\put(-175,20){\circle*{5}} 
\put(-139,20){\circle*{5}} \put(-121,20){\circle*{5}}
\put(-85,20){\circle*{5}}
\put(-178,-12){\small{$1'$}}\put(-161,-12){\small{$\cdots$}}
\put(-143,-12){\small{$i'$}}\put(-134,-12){\small{$(i+1)'$}}
\put(-107,-12){\small{$\cdots$}}\put(-89,-12){\small{$n'$}}
\put(-200,8){$\mathfrak{e}_{i}$=}\put(-159,8){$\cdots$}
\put(-105,8){\small{$\cdots$}}
\qbezier[40](-175,20)(-175,10)(-175,0)\qbezier[50](-139,0)(-130,10)(-121,0)
\qbezier[50](-139,20)(-130,12)(-121,20)\qbezier[50](-85,0)(-85,10)(-85,20)
\end{picture}
\end{center}
We now describe the multiplication of two diagrams. Let $x$ be
either a $K$-transcendent or algebraic element. We consider 
$F[\mathscr{A}_n]$, the free $F$-module generated by $\mathscr{A}_n$
and show that $F[\mathscr{A}_n]$
is a monoid whose multiplication extends that of $\mathbb{B}_n(x)$ in a
natural way. To this end let
$\mathfrak{a},\mathfrak{b}\in\mathscr{A}_n$. Let
$G(\mathfrak{a},\mathfrak{b})$ be the graph obtained by arranging
the diagram $\mathfrak{a}$ above $\mathfrak{b}$ and introducing the
verticals arcs $(i,i')$, $1\le i\le n$ where $i$ and $i'$ are
contained in $\text{\rm top}(\mathfrak{a})$ and $\text{\rm
bot}(\mathfrak{b})$-vertex, respectively. For instance,
\begin{center}
\begin{picture}(380,40)\label{E:ab}
\put(0,20){\circle*{5}} \put(16,20){\circle*{5}}
\put(32,20){\circle*{5}} \put(48,20){\circle*{5}}
\put(64,20){\circle*{5}}\put(80,20){\circle*{5}}
\put(150,20){\circle*{5}}\put(166,20){\circle*{5}}
\put(182,20){\circle*{5}}\put(198,20){\circle*{5}}
\put(214,20){\circle*{5}} \put(230,20){\circle*{5}}
\put(300,20){\circle*{5}} \put(316,20){\circle*{5}}
\put(332,20){\circle*{5}} \put(348,20){\circle*{5}}
\put(364,20){\circle*{5}}\put(380,20){\circle*{5}}
\put(300,-20){\circle*{5}} \put(316,-20){\circle*{5}}
\put(332,-20){\circle*{5}} \put(348,-20){\circle*{5}}
\put(364,-20){\circle*{5}}\put(380,-20){\circle*{5}}
\put(0,0){\circle*{5}} \put(16,0){\circle*{5}}
\put(32,0){\circle*{5}} \put(48,0){\circle*{5}}
\put(64,0){\circle*{5}}
\put(80,0){\circle*{5}}\put(150,0){\circle*{5}}
\put(166,0){\circle*{5}}\put(182,0){\circle*{5}}
\put(198,0){\circle*{5}} \put(214,0){\circle*{5}}
\put(230,0){\circle*{5}} \put(300,0){\circle*{5}}
\put(316,0){\circle*{5}} \put(332,0){\circle*{5}}
\put(348,0){\circle*{5}}
\put(364,0){\circle*{5}}\put(380,0){\circle*{5}}
 \put(300,40){\circle*{5}}
\put(316,40){\circle*{5}} \put(332,40){\circle*{5}}
\put(348,40){\circle*{5}}
\put(364,40){\circle*{5}}\put(380,40){\circle*{5}}
\put(108,20){$\times$} \put(255,20){$=$}
\qbezier[400](0,20)(16,40)(32,20)
\qbezier[40](48,20)(40,24)(48,28)\qbezier[40](48,28)(56,24)(48,20)
\qbezier[40](64,20)(56,24)(64,28)\qbezier[40](64,28)(72,24)(64,20)
\qbezier[400](0,0)(8,10)(16,20)\qbezier[40](16,0)(8,4)(16,8)
\qbezier[40](16,8)(24,4)(16,0)\qbezier[200](32,0)(56,30)(80,0)
\qbezier(80,20)(72,10)(64,0)
\qbezier[40](48,0)(40,4)(48,8)\qbezier[40](48,8)(56,4)(48,0)
\qbezier[40](150,20)(142,24)(150,28)\qbezier[40](150,28)(158,24)(150,20)
\qbezier[200](166,20)(182,40)(198,20)\qbezier[200](182,20)(206,50)(230,20)
\qbezier[200](150,0)(166,20)(182,0)\qbezier[200](198,0)(214,20)(230,0)
\qbezier[40](166,0)(158,4)(166,8)\qbezier[40](166,8)(174,4)(166,0)
\qbezier(214,20)(214,10)(214,0)\qbezier[8](300,0)(300,10)(300,20)
\qbezier[8](316,0)(316,10)(316,20)\qbezier[8](332,0)(332,10)(332,20)
\qbezier[8](348,0)(348,10)(348,20)\qbezier[8](364,0)(364,10)(364,20)
\qbezier[8](380,0)(380,10)(380,20)\qbezier(300,-20)(316,0)(332,-20)
\qbezier(316,-20)(308,-16)(316,-12)\qbezier(316,-20)(324,-16)(316,-12)
\qbezier(348,-20)(364,0)(380,-20)\qbezier(364,0)(364,-10)(364,-20)
\qbezier(300,0)(292,4)(300,8)\qbezier(300,0)(308,4)(300,8)
\qbezier(316,0)(332,20)(348,0)\qbezier(332,0)(356,20)(380,0)
\qbezier(300,40)(316,60)(332,40)\qbezier(348,40)(340,44)(348,48)
\qbezier(348,40)(356,44)(348,48)\qbezier(364,40)(356,44)(364,48)
\qbezier(364,40)(372,44)(364,48)\qbezier(380,40)(372,30)(364,20)
\qbezier(316,40)(308,30)(300,20)\qbezier(316,20)(308,24)(316,28)
\qbezier(316,20)(324,24)(316,28)\qbezier(348,20)(340,24)(348,28)
\qbezier(348,20)(356,24)(348,28)\qbezier(332,20)(356,50)(380,20)
\end{picture}
\end{center}
$G(\mathfrak{a},\mathfrak{b})$ contains two types of information:
(i) $\ell(\mathfrak{a},\mathfrak{b})$, the number of
$G(\mathfrak{a},\mathfrak{b})$ components that do not contain any
vertices of $\text{\rm top}(\mathfrak{a})$ or $\text{\rm bot}(\mathfrak{b})$
and (ii) ${G'}(\mathfrak{a},\mathfrak{b})$, the graph over the
$\text{\rm top}(\mathfrak{a})$ and $\text{\rm bot}(\mathfrak{b})
$-vertices obtained as follows: any two vertices are connected by
an arc if and only if they are connected by a
$G(\mathfrak{a},\mathfrak{b})$-path.
Accordingly, we have $\mathfrak{a}\,\cdot\, \mathfrak{b}=
x^{\ell(\mathfrak{a},\mathfrak{b})} \, G'(\mathfrak{a},\mathfrak{b}) 
$ and we shall write $\mathfrak{a} \mathfrak{b}$ instead of 
$\mathfrak{a}\cdot \mathfrak{b}$. 
$F[\mathscr{A}_n]$ becomes via ``$\,\cdot\,$'' an associative, unitary 
$F$-subalgebra of the partition algebra, which we denote by $\mathbb{A}_n(x)$. 
Furthermore, via ``$\,\cdot\,$'', $F[\mathscr{L}_n]$ becomes an associative 
$F$-subalgebra of $\mathbb{A}_n(x)$, denoted by $\mathbb{L}_n(x)$. 

By abuse of notation, we write
$\mathbb{A}_n=\mathbb{A}_n(x)$, $\mathbb{B}_n=\mathbb{B}_n(x)$ and 
$\mathbb{L}_n=\mathbb{L}_n(x)$.
Furthermore, we shall assume that $F$ is a field of characteristic
zero and the term ``semisimple'' is synonymous to ``direct sum of
full matrix algebras''. In other words, $F$ is a splitting 
field of $\mathbb{A}_n$ and $\mathbb{L}_n$.

\begin{remark}
{\rm Let $\ell_1(\mathfrak{a},\mathfrak{b})$ and $\ell_2(\mathfrak{a},
\mathfrak{b})$ denote the number of inner components that are cycles and
lines with loops at the start and endpoint. Setting
\begin{equation}\label{E:mult2}
\mathfrak{a}\,\circ\, \mathfrak{b}=
x_1^{\ell_1(\mathfrak{a},\mathfrak{b})} \,
x_2^{\ell_2(\mathfrak{a},\mathfrak{b})}\,
G'(\mathfrak{a},\mathfrak{b}),
\end{equation}
we observe that $F[\mathscr{A}_n]$ becomes via ``$\circ$'' an associative 
unitary $F$-algebra, which we denote by $\mathbb{A}_n(x_1,x_2)$. Obviously,
in case of $x_1=x_2$ the multiplications ``$\circ$'' and ``$\,\cdot\,$'' 
coincide.
}
\end{remark}
As it is the case for $\mathbb{B}_n$, there exist natural embedding
between $\mathbb{A}_{n-1}$ and $\mathbb{A}_n$ obtained by adding the
vertices $n$ and $n'$ together with the straight vertical arc,
$(n,n')$, $\epsilon_n\colon \mathbb{A}_{n-1}\longrightarrow
\mathbb{A}_n$. By restriction the latter induces an embedding of 
$\mathbb{L}_{n-1}$ into $\mathbb{L}_{n}$, which we denote again by
$\epsilon_n\colon \mathbb{L}_{n-1}\longrightarrow \mathbb{L}_n$.
Furthermore, there exists an involution on $\mathbb{A}_n$ and
$\mathbb{L}_n$ obtained by transposing the rows, denoted by
$\mathfrak{a}\mapsto \mathfrak{a}^*$. We set
$\mathscr{A}_n^{m}\subset \mathscr{A}_n^{n}=\mathscr{A}_n$ to be the
subset of diagrams having at most $m$ vertical arcs and let
$\mathbb{A}_{n}^m$ be the ideal generated by $\mathscr{A}_n^{m}$.
The ideals $\mathbb{A}_{n}^m$ for $0\le m\le n$ give a filtration of
$\mathbb{A}_n$, i.e.~we have
\begin{equation}
\mathbb{A}_n^0\subsetneq \mathbb{A}_n^1\subsetneq \dots \subsetneq
\mathbb{A}_n^{n-1}\subsetneq \mathbb{A}_n^n=\mathbb{A}_n.
\end{equation}
Furthermore, let
$\mathbb{I}_n^m=\mathbb{A}_n^{m}/\mathbb{A}_n^{m-1}$ denote the
algebra induced by $\mathbb{A}_n$, which is generated by the set all
$\mathscr{A}_n$-diagrams with exactly $m$ vertical arcs, denoted by
$\mathscr{I}_n^m$. That is, we have $[\mathfrak{a}]\cdot
[\mathfrak{b}]=[\mathfrak{a}\cdot \mathfrak{b}]$ where
$[\mathfrak{a}\cdot\mathfrak{b}]$ is zero if it contains less than
$m$ vertical arcs. Similarly, we have $[\mathfrak{a}]\circ
[\mathfrak{b}]=[\mathfrak{a}\circ \mathfrak{b}]$ in case of
``$\circ$". By abuse of notation we shall identify
$[\mathfrak{a}]$ with $\mathfrak{a}$. Note that $\mathbb{I}_{n}^{n}$
is isomorphic to the group algebra $K[S_{n}]$. Similarly,
$\mathbb{L}_n$ has the filtration
\begin{equation}
\mathbb{L}_n^0\subsetneq \mathbb{L}_n^1\subsetneq \dots \subsetneq
\mathbb{L}_n^{n-1}\subsetneq \mathbb{L}_n^n=\mathbb{L}_n
\end{equation}
and by abuse of notation we denote the quotients $\mathbb{L}_n^{m}/
\mathbb{L}_n^{m-1}$ and the set all $\mathscr{L}_n$-diagrams with
exactly $m$ vertical arcs again by $\mathbb{I}_n^m$ and $\mathscr{I}_n^m$,
respectively.

An integer partition $\lambda=(\lambda_1,\lambda_2,\dots,\lambda_{s})$, 
where $\lambda_1\geq \lambda_2\geq\dots\geq \lambda_n$ is a weakly 
decreasing sequence of positive integers. 
If $\sum_i\lambda_i=n$, we write $\lambda\vdash n$. Since
any irreducible $S_{n}$-module is indexed by a partition \cite{Sagan} $\lambda$
we write them as $S^\lambda$. The dimension of $S^{\lambda}$ is
denoted by $f^{\lambda}$ and its character by $\chi^{\lambda}$. The
integers $\lambda_i$ is called the parts of $\lambda$. The Ferrer
diagram associated with a partition $\lambda$ is a collection of
boxes, $[\lambda]$, in $\mathbb{Z}^{2}$ using matrix-style
coordinates. 
The boxes are arranged in left-justified rows with
weakly decreasing numbers of boxes in each row.
For a box $p=(i,j)$ in $[\lambda]$, $j-i$ is the content of $p$,
denoted by $c(p)$. If $\lambda$ and $\mu$ are two partitions such 
that $\lambda_{i}\geq\mu_{i}$ for all $i$, then we say $\lambda$ 
contains $\mu$ and write $\mu\subseteq\lambda$. If
$\mu\subseteq\lambda$, then the skew partition $\lambda/\mu$ is the
set $[\lambda]/[\mu]$. A special case is when $\lambda/\mu$ contains
one box only, denoted by $\lambda\sqsupset \mu$. If we identify
$\lambda$ with a Ferrer diagram, then an inner corner of $\lambda$
is a node $(i,j)\in \lambda$ whose removal leaves the Ferrers
diagram of a partition. Any partition $\mu_1$ obtained by such a
removal is denoted by $\mu_1\sqsubset\lambda$. An outer corner of
$\lambda$ is a node $(i,j)\notin \lambda$ whose addition produces
the Ferrer diagram of a partition. Any partition $\mu_2$ obtained
by such an addition is denoted by $\lambda\sqsubset\mu_2$. Let
$\text{\rm res}_{S_{n-1}}^{S_{n}}S^{\lambda}$ and $\text{\rm
ind}_{S_{n}}^{S_{n+1}}S^{\lambda}$ denote the restriction and the
induced representation of $S^{\lambda}$. Then we have \cite{Sagan}
\begin{equation}\label{E:branch1}
\text{\rm res}_{S_{n-1}}^{S_{n}}S^{\lambda}\cong\bigoplus_{\mu_1
\sqsubset\lambda}S^{\mu_1}\quad\text{\rm and}\quad
\text{\rm ind}_{S_{n}}^{S_{n+1}}S^{\lambda}\cong
\bigoplus_{\lambda\sqsubset\mu_2}S^{\mu_2}.
\end{equation}
We proceed by describing the induced representation  \cite{Serre} in a 
specific way.
For any $1\le j\le n-t$, set $\tau_j=(j,n+1-t)$ and $\tau_{n+1-t}=1$.
Then $\{\tau_r\mid 1\le j\le n+1-t\}$ is a set of representatives of
$S_{n+1-t}/S_{n-t}$ and 
\begin{equation}
\text{\rm ind}_{S_{n-t}}^{S_{n+1-t}}S^\lambda\cong K[S_{n+1-t}]
\otimes_{K[S_{n-t}]}S^\lambda\cong
\bigoplus_{j=1}^{n+1-t}(S^\lambda,j).
\end{equation}
Here, the $S_{n+1-t}$-action on $\text{\rm
ind}_{S_{n-t}}^{S_{n+1-t}} S^\lambda$ is given as follows: for given
$\sigma\in S_{n+1-t}$ and $1\le j\le n+1-t$, let $s$ be the unique
index such that $\sigma\tau_j\in \tau_sS_{n-t}$ holds, then
\begin{equation}\label{E:ind-act}
\sigma\cdot (w,j)=((\tau_s^{-1}\sigma\tau_j)w,s).
\end{equation}
In the following, let $\mathbb{X}_n$ denote either $\mathbb{A}_n$ or 
$\mathbb{L}_n$. Let $M$ be a
$\mathbb{X}_n$-left module. Then $\text{\rm res}_{n-1}(M)$ denotes
the $\mathbb{X}_{n-1}$-left module, obtained via restriction with
respect to the embedding $\epsilon_n\colon\mathbb{X}_{n-1}
\longrightarrow \mathbb{X}_n$ and
$
\text{\rm ind}_{n+1}(M)=\mathbb{X}_{n+1}\otimes_{\mathbb{X}_{n}}M
$
denotes the induced $\mathbb{X}_{n+1}$-left module.

\section{$\mathbb{X}_n$-Modules}
The semisimplicity of $\mathbb{X}_n$ is closely tied to the 
structure of $\mathbb{X}_n$-modules. Therefore we shall begin by
establishing their basic properties. The latter are a result of the 
general machinery derived from the fact that $\mathbb{A}_n$ and 
$\mathbb{L}_n$ are for $x\neq 0$ quasi-hereditary algebras. However, 
we shall prove them directly. 
Let $\mathfrak{u}_{n,t}$ denote the diagram having straight verticals
except of loops incident to $(n-t+1),\cdots,n$ and
$(n-t+1)',\dots,n'$, respectively. Pictorially,
\begin{center}
\begin{picture}(60,20)
\put(0,0){\circle*{5}} \put(32,0){\circle*{5}}
\put(48,0){\circle*{5}} \put(80,0){\circle*{5}}
\put(0,20){\circle*{5}} \put(32,20){\circle*{5}}
\put(48,20){\circle*{5}} \put(80,20){\circle*{5}}
\qbezier(0,0)(0,10)(0,20)\qbezier(32,0)(32,10)(32,20)
\qbezier(48,0)(40,4)(48,8)\qbezier(48,0)(56,4)(48,8)
\qbezier(48,20)(40,24)(48,28)\qbezier(48,20)(56,24)(48,28)
\qbezier(80,0)(72,4)(80,8)\qbezier(80,0)(88,4)(80,8)
\qbezier(80,20)(72,24)(80,28)\qbezier(80,20)(88,24)(80,28)
\put(10,10){$\cdots$} \put(60,10){$\cdots$} \put(-2,30){\small{$1$}}
\put(16,30){\small{$n-t$}}\put(40,30){\small{$n-t+1$}}\put(78,30){\small{$n$}}
\put(-40,10){$\mathfrak{u}_{n,t}=$}
\end{picture}
\end{center}
Let $x\neq 0$ and $\lambda\vdash (n-t)\le n$ be a partition, we set
\begin{equation}
\mathscr{M}_{\mathbb{X}_n}(\lambda)=\mathbb{I}_{n}^{n-t}\mathfrak{u}_{n,t}
\otimes_{S_{n-t}}S^\lambda
\quad \text{and }\quad
\mathscr{N}_{\mathbb{X}_n}(\lambda)=\{w\in\mathscr{M}_{\mathbb{X}_n}
(\lambda)\mid \mathbb{I}_{n}^{n-t}w=0\}.
\end{equation}
$\mathscr{M}_{\mathbb{X}_n}(\lambda)$ and $\mathscr{N}_{\mathbb{X}_n}
(\lambda)$ become via linear extension of the action
\begin{equation}
\mathfrak{b}\cdot (\mathfrak{a}\otimes v)=
(\mathfrak{b}\mathfrak{a})\otimes v,
\end{equation}
$\mathbb{X}_n$- and $\mathbb{I}_n^{n-t}$-left modules, respectively. 
Indeed, for any
$0\le t\le n$, $\mathbb{X}_{n}^{n-t}\lhd \mathbb{X}_n$ is a two sided
ideal, which implies that $\mathscr{N}_{\mathbb{X}_n}(\lambda)$ is a
$\mathbb{X}_n$-invariant subspace.

\begin{proposition}\label{P:cool}
Let $x\neq 0$ and $\lambda\vdash (n-t)\le n$ be a partition, then the following
assertions hold\\
{\rm (a)} $\mathscr{M}_{\mathbb{X}_n}(\lambda)/
\mathscr{N}_{\mathbb{X}_n}(\lambda)$
is irreducible as a $\mathbb{X}_n$-module and $\mathbb{I}_n^{n-t}$-module,
respectively. In particular, $\mathscr{M}_{\mathbb{X}_n}(\lambda)$ is
irreducible if and only if $\mathscr{N}_{\mathbb{X}_n}(\lambda)=0$.\\
{\rm (b)} $\mathscr{N}_{\mathbb{X}_n}(\lambda)$ is a maximal
          $\mathbb{X}_n$-submodule of $\mathscr{M}_{\mathbb{X}_n}(\lambda)$
and $\mathscr{N}_{\mathbb{X}_n}(\lambda)$ is unique.\\
{\rm (c)} For any irreducible $\mathbb{X}_n$-module, $V$, there exists a
partition $\lambda\vdash m\le n$ with the property
$V\cong \mathscr{M}_{\mathbb{X}_n}
(\lambda)/\mathscr{N}_{\mathbb{X}_n}(\lambda)$.\\
\end{proposition}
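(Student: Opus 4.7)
The plan is to follow the cellular-module paradigm of Martin~\cite{Martin} and Doran {\it et al.}~\cite{Doran}, the main structural input being a key lemma on the ``corner'' $\mathfrak{u}_{n,t}\mathbb{I}_n^{n-t}\mathfrak{u}_{n,t}$. I would first establish: $\mathfrak{u}_{n,t}^2=x^t\mathfrak{u}_{n,t}$ (so $e=\mathfrak{u}_{n,t}/x^t$ is idempotent when $x\neq 0$); $\sigma\mathfrak{u}_{n,t}=\mathfrak{u}_{n,t}\sigma$ for every $\sigma\in S_{n-t}\hookrightarrow\mathbb{X}_n$; and for every $\mathfrak{a}\in\mathscr{I}_n^{n-t}$ the sandwich $\mathfrak{u}_{n,t}\mathfrak{a}\mathfrak{u}_{n,t}$ either vanishes in $\mathbb{I}_n^{n-t}$ or equals $x^N\sigma\mathfrak{u}_{n,t}$ for a unique $\sigma\in S_{n-t}$. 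Equivalently $\mathfrak{u}_{n,t}\mathbb{I}_n^{n-t}\mathfrak{u}_{n,t}\cong x^tK[S_{n-t}]$, which reduces representation-theoretic questions on $\mathscr{M}_{\mathbb{X}_n}(\lambda)$ to those on $K[S_{n-t}]$.

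For (a) and (b), I would show that $\mathbb{I}_n^{n-t}w=\mathscr{M}_{\mathbb{X}_n}(\lambda)$ for every $w\notin\mathscr{N}_{\mathbb{X}_n}(\lambda)$. Since $\mathscr{M}_{\mathbb{X}_n}(\lambda)$ is $\mathbb{I}_n^{n-t}$-generated by $\mathfrak{u}_{n,t}\otimes S^\lambda$, it suffices to exhibit a nonzero element of that subspace inside $\mathbb{I}_n^{n-t}w$. Choose $\mathfrak{y}\in\mathbb{I}_n^{n-t}$ with $\mathfrak{y}w\neq 0$, expand $\mathfrak{y}w=\sum_j c_j(\mathfrak{d}_j\otimes v_j)$ in the spanning set $\mathbb{I}_n^{n-t}\mathfrak{u}_{n,t}\otimes S^\lambda$ (with distinct standard-bottom diagrams $\mathfrak{d}_j$ and $v_j\neq 0$), and apply $\mathfrak{u}_{n,t}\mathfrak{d}_0^{*}$ on the left. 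By the key lemma each surviving summand $\mathfrak{u}_{n,t}\mathfrak{d}_0^{*}\mathfrak{d}_j$ lies in $K[S_{n-t}]\mathfrak{u}_{n,t}$, so the result lies in $\mathfrak{u}_{n,t}\otimes S^\lambda$; the diagonal term ($j=0$) yields a nonzero multiple of $\mathfrak{u}_{n,t}\otimes v_0$ because $\mathfrak{d}_0^{*}\mathfrak{d}_0$ has matching middle rows, and a linear-independence argument for a suitable $\mathfrak{d}_0$ ensures this contribution is not cancelled by off-diagonal terms. Thus $\mathfrak{u}_{n,t}\otimes v'\in\mathbb{I}_n^{n-t}w$ for some $v'\neq 0$. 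The identity $(\sigma\mathfrak{u}_{n,t})(\mathfrak{u}_{n,t}\otimes v')=x^t(\mathfrak{u}_{n,t}\otimes\sigma v')$ combined with the $S_{n-t}$-irreducibility of $S^\lambda$ then propagates this to all of $\mathfrak{u}_{n,t}\otimes S^\lambda\subseteq\mathbb{I}_n^{n-t}w$, giving $\mathbb{I}_n^{n-t}w=\mathscr{M}_{\mathbb{X}_n}(\lambda)$. This settles both the $\mathbb{I}_n^{n-t}$- and $\mathbb{X}_n$-irreducibility of $\mathscr{M}_{\mathbb{X}_n}(\lambda)/\mathscr{N}_{\mathbb{X}_n}(\lambda)$ in (a), and the unique maximality of $\mathscr{N}_{\mathbb{X}_n}(\lambda)$ in (b): any proper submodule not contained in $\mathscr{N}_{\mathbb{X}_n}(\lambda)$ would by the same argument equal $\mathscr{M}_{\mathbb{X}_n}(\lambda)$.

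For (c), given an irreducible $\mathbb{X}_n$-module $V$, let $m$ be minimal with $\mathbb{X}_n^mV\neq 0$; then $V=\mathbb{X}_n^mV$ and $\mathbb{X}_n^{m-1}V=0$, so $V$ is an irreducible $\mathbb{I}_n^m$-module. By the key lemma with $t=n-m$, $e\mathbb{I}_n^me\cong K[S_m]$ with $e=\mathfrak{u}_{n,n-m}/x^{n-m}$, and a diagrammatic factorisation $\mathfrak{a}=\mathfrak{d}_1(\sigma\mathfrak{u}_{n,n-m})\mathfrak{d}_2^{*}$ for every $\mathfrak{a}\in\mathscr{I}_n^m$ shows that the two-sided ideal generated by $e$ in $\mathbb{I}_n^m$ is all of $\mathbb{I}_n^m$. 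Standard idempotent-algebra arguments then give a bijection between irreducible $\mathbb{I}_n^m$- and $K[S_m]$-modules, so $V\cong\mathscr{M}_{\mathbb{X}_n}(\lambda)/\mathscr{N}_{\mathbb{X}_n}(\lambda)$ for the unique $\lambda\vdash m$ with $eV\cong S^\lambda$. The main obstacle throughout is the combinatorial verification of the key lemma---especially the nonvanishing of the diagonal sandwich used in (a)---which requires careful bookkeeping of the $x$-exponents arising from inner loop-components formed against the loop-columns of $\mathfrak{u}_{n,t}$.
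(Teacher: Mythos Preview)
Your approach is essentially the same as the paper's: both establish that every $w\notin\mathscr{N}_{\mathbb{X}_n}(\lambda)$ satisfies $\mathbb{I}_n^{n-t}w=\mathscr{M}_{\mathbb{X}_n}(\lambda)$, which immediately gives (a) and (b), and both reduce (c) to the representation theory of $S_{n-t}$. The packaging differs. You frame things via the idempotent $e=x^{-t}\mathfrak{u}_{n,t}$ and the corner identification $e\mathbb{I}_n^{n-t}e\cong K[S_{n-t}]$, then invoke the standard $eAe$-correspondence for (c); the paper instead writes down the left-module decomposition $\mathbb{I}_n^{n-t}\cong\bigoplus_{\lambda\vdash n-t}n_\lambda\,\mathscr{M}_{\mathbb{X}_n}(\lambda)$ and reads off a surjection $\mathscr{M}_{\mathbb{X}_n}(\lambda)\twoheadrightarrow V$ by hand. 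Your version is conceptually cleaner and situates the result in general idempotent theory; the paper's version is more self-contained and avoids citing outside machinery.

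One remark on what you call the ``main obstacle'': the cancellation worry in (a) is entirely avoidable, and the paper does avoid it. Since $\mathscr{I}_n^{n-t}$ is a basis of $\mathbb{I}_n^{n-t}$, you may take $\mathfrak{y}=\mathfrak{b}_0$ to be a \emph{single diagram} with $\mathfrak{b}_0 w\neq 0$. Then $\mathfrak{b}_0 w$ is automatically a single pure tensor $\widetilde{\mathfrak{b}}_0\otimes w'$ with $w'\neq 0$: every surviving product $\mathfrak{b}_0\mathfrak{a}_i$ has top row $\text{top}(\mathfrak{b}_0)$ and bottom row $\text{bot}(\mathfrak{u}_{n,t})$, hence equals $x^{\ell_i}\widetilde{\mathfrak{b}}_0\sigma_i$ for a fixed $\widetilde{\mathfrak{b}}_0$ with noncrossing verticals and some $\sigma_i\in S_{n-t}$ absorbed into the $S^\lambda$-factor. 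No off-diagonal terms arise. The paper then replaces $\text{top}(\mathfrak{b}_0)$ by an arbitrary top row while keeping $\text{bot}(\mathfrak{b}_0)$ fixed (its $\mathfrak{b}^\ddagger$ construction), observes this gives the same nonzero $w'$, and finally runs through $S^\lambda$ via the $S_{n-t}$-action. So your sandwich step $\mathfrak{u}_{n,t}\mathfrak{d}_0^{*}$ and the linear-independence bookkeeping you anticipated are unnecessary.
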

\begin{proof}
We first prove (a). Since $\mathscr{N}_{\mathbb{X}_n}(\lambda)$ is a
$\mathbb{X}_n$-invariant subspace, $\mathscr{M}_{\mathbb{X}_n}(\lambda)/
\mathscr{N}_{\mathbb{X}_n}(\lambda)$  is a $\mathbb{X}_n$- and 
$\mathbb{I}_n^{n-t}$-module.\\
{\it Claim.} Any $v\in \mathscr{M}_{\mathbb{X}_n}(\lambda)\setminus
\mathscr{N}_{\mathbb{X}_n}(\lambda)$ has
the property $\mathbb{I}_n^{n-t} v=
\mathscr{M}_{\mathbb{X}_n}(\lambda)$.\\
To prove the claim we represent $v=\sum_i \mathfrak{a}_i\otimes v_i$, where
$\mathfrak{a}_i\in \mathscr{I}_n^{n-t}$ and $\text{\rm bot}(\mathfrak{a}_i)=
\text{\rm bot}(\mathfrak{u}_{n,t})$.
Let $\delta_{\mathfrak{b}\mathfrak{a}_i}=1$ if
$\mathfrak{b}\mathfrak{a}_i\neq 0$ and $\delta_{\mathfrak{b}\mathfrak{a}_i}=0$
in $\mathbb{I}_n^{n-t}$, otherwise.
For an arbitrary diagram, $\mathfrak{b}\in\mathscr{I}_n^{n-t}$, we have
\begin{equation}\label{E:qq}
\mathfrak{b}\cdot \sum_i \mathfrak{a}_i\otimes v_i=
\sum_i (\mathfrak{b}\mathfrak{a}_i)\otimes v_i=
\widetilde{\mathfrak{b}}\otimes \sum_i\delta_{\mathfrak{b}
\mathfrak{a}_i}x^{\ell(\mathfrak{a}_i,\mathfrak{b})}
\sigma_{\mathfrak{a}_i,\mathfrak{b}}v_i,
\end{equation}
where
$\ell(\mathfrak{b},\mathfrak{a}_i)$ denotes the number of inner components
in $G(\mathfrak{b},\mathfrak{a}_i)$, 
$\sigma_{\mathfrak{b},\mathfrak{a}_i}\in S_{n-t}$
is such that the diagram $\widetilde{\mathfrak{b}}\in\mathscr{I}^{n-t}_n$ 
has noncrossing verticals, has
$\text{\rm top}(\widetilde{\mathfrak{b}})
=\text{\rm top}(\mathfrak{b})$ and satisfies
\begin{equation}
x^{\ell(\mathfrak{b},\mathfrak{a}_i)}\widetilde{\mathfrak{b}}\,
\sigma_{\mathfrak{b},\mathfrak{a}_i}=\mathfrak{b}\mathfrak{a}_i.
\end{equation}
For any $v\in\mathscr{M}_{\mathbb{X}_n}(\lambda)\setminus
\mathscr{N}_{\mathbb{X}_n}(\lambda)$ we have
$\mathbb{I}_{n}^{n-t}v\neq 0$, whence there exists some
$\mathfrak{b}_0\in\mathscr{I}_n^{n-t}$
such that
\begin{equation}\label{E:b0}
\mathfrak{b}_0\cdot \sum_i \mathfrak{a}_i\otimes v_i=
\sum_i (\mathfrak{b}_0\mathfrak{a}_i)\otimes v_i=
\widetilde{\mathfrak{b}}_0\otimes \sum_i\delta_{\mathfrak{b}_0\mathfrak{a}_i}
x^{\ell(\mathfrak{b}_0,\mathfrak{a}_i)}
\sigma_{\mathfrak{b}_0,\mathfrak{a}_i}v_i\neq 0,
\end{equation}
where $\text{\rm top}(\widetilde{\mathfrak{b}}_0)=
\text{\rm top}(\mathfrak{b}_0)$, $\widetilde{\mathfrak{b}}_0
\in\mathscr{I}_n^{n-t}$ has noncrossing verticals and
$\sigma_{\mathfrak{b}_0,\mathfrak{a}_i}\in S_{n-t}$ is such that
\begin{equation}
x^{\ell(\mathfrak{b}_0,\mathfrak{a}_i)}\widetilde{\mathfrak{b}}_0\,
\sigma_{\mathfrak{b}_0,\mathfrak{a}_i}=\mathfrak{b}_0\mathfrak{a}_i.
\end{equation}
For arbitrary $\mathfrak{b}\in\mathscr{I}_n^{n-t}$ we consider
the element $\mathfrak{b}^\ddagger$ having the properties:
$\text{\rm top}(\mathfrak{b}^\ddagger)=\text{\rm top}(\mathfrak{b})$,
$\text{\rm bot}(\mathfrak{b}^\ddagger)=\text{\rm bot}(\mathfrak{b}_0)$,
having $n-t$ vertical arcs and satisfying
\begin{equation}\label{E:dagger}
\mathfrak{b}^\ddagger\mathfrak{a}_i=
x^{\ell(\mathfrak{b}_0,\mathfrak{a}_i)}\widetilde{\mathfrak{b}^\ddagger}
\sigma_{\mathfrak{b}_0,\mathfrak{a}_i},
\end{equation}
where $\widetilde{\mathfrak{b}^\ddagger}\in\mathscr{I}_n^{n-t}$
has noncrossing verticals and
$\text{\rm top}(\widetilde{\mathfrak{b}^\ddagger})=
\text{\rm top}(\mathfrak{b}^\ddagger)=\text{\rm top}(\mathfrak{b})$.
Multiplying with $\mathfrak{b}^\ddagger$ we obtain
\begin{equation*}
\mathfrak{b}^\ddagger\cdot
\sum_i \mathfrak{a}_i\otimes v_i=
\sum_i (\mathfrak{b}^\ddagger\mathfrak{a}_i)\otimes v_i=
\widetilde{\mathfrak{b}^\ddagger}\otimes \sum_i\delta_{\mathfrak{b}_0
\mathfrak{a}_i}x^{\ell(\mathfrak{b}_0,\mathfrak{a}_i)}
\sigma_{\mathfrak{b}_0,\mathfrak{a}_i}v_i
\neq 0.
\end{equation*}
We set $w=\sum_i\delta_{\mathfrak{b}_0
\mathfrak{a}_i}x^{\ell(\mathfrak{b}_0,\mathfrak{a}_i)}
\sigma_{\mathfrak{b}_0,\mathfrak{a}_i}v_i$ and note that $w\neq 0$ holds.
Since $S^\lambda$ is irreducible, for any $0\neq u$ the elements $\sigma_0 u$,
$\sigma_0\in S_{n-t}$ generate $S^\lambda$.
Since for any $\sigma_0\in S_{n-t}$ there exists some
$g(\sigma_0,\mathfrak{b}^\ddagger)\in \mathbb{I}_n^{n-t}$ with the property
\begin{equation}
g(\sigma_0,\mathfrak{b}^\ddagger)\cdot\widetilde{\mathfrak{b}^\ddagger}=
x^m\widetilde{\mathfrak{b}^\ddagger}\sigma_0 \quad
\text{\rm for some $m\in\mathbb{Z}$,}
\end{equation}
we conclude
\begin{equation}
g(\sigma_0) \cdot \mathfrak{b}^\ddagger\cdot
\sum_i \mathfrak{a}_i\otimes v_i=
g(\sigma_0)\cdot\widetilde{\mathfrak{b}^\ddagger}
\otimes w =x^m
\widetilde{\mathfrak{b}^\ddagger}
\otimes \sigma_0w.
\end{equation}
Accordingly, $\mathbb{I}_n^{n-t}\cdot v = \mathbb{I}_n^{n-t}\mathfrak{u}_{n,t}
\otimes_{S_{n-t}} S^\lambda$ and the Claim is proved.\\
As a result, any nontrivial 
$\mathscr{M}_{\mathbb{X}_n}(\lambda)/\mathscr{N}_{\mathbb{X}_n}
(\lambda)$-element 
generates $\mathscr{M}_{\mathbb{X}_n}(\lambda)/
\mathscr{N}_{\mathbb{X}_n}(\lambda)$, 
which is equivalent to $\mathscr{M}_{\mathbb{X}_n}(\lambda)/
\mathscr{N}_{\mathbb{X}_n}(\lambda)$ being an irreducible
$\mathbb{I}_n^{n-t}$-left module.
This action extends to an unique $\mathbb{X}_n$-left action
with respect to which
$\mathscr{M}_{\mathbb{X}_n}(\lambda)/\mathscr{N}_{\mathbb{X}_n}(\lambda)$ is
an irreducible $\mathbb{X}_n$-module. This proves assertion (a). \\
We next prove (b): the maximality of $\mathscr{N}_{\mathbb{X}_n}(\lambda)$ 
follows
from the irreducibility of $\mathscr{M}_{\mathbb{X}_n}(\lambda)/
\mathscr{N}_{\mathbb{X}_n}(\lambda)$.
It remains to show that $\mathscr{N}_{\mathbb{X}_n}(\lambda)$ is unique.
For this purpose, let $M$ be a maximal $\mathbb{X}_n$-left submodule of
$\mathscr{M}_{\mathbb{X}_n}(\lambda)$ different from 
$\mathscr{N}_{\mathbb{X}_n}(\lambda)$. Then there exist a 
$v\in M\setminus \mathscr{N}_{\mathbb{X}_n}(\lambda)$, which, according
to (a) generates $\mathscr{M}_{\mathbb{X}_n}(\lambda)$. Consequently, any
maximal $\mathscr{M}_{\mathbb{X}_n}(\lambda)$-module, different from
$\mathscr{N}_{\mathbb{X}_n}(\lambda)$, is equal to 
$\mathscr{M}_{\mathbb{X}_n}(\lambda)$ and (b) follows.\\
Next we show (c).
Let $(n-t)$ be the smallest integer with the property $\mathbb{X}_n^{n-t}$
is not acting trivially on $V$.
Consider the set $V_0=\{v\in V\mid \mathbb{I}_n^{n-t}v=0\}$. Clearly, since
$\mathbb{X}_{n}^{n-t}\lhd \mathbb{X}_n$ is a two sided ideal, $V_0$ is an
$\mathbb{X}_n$-invariant subspace and the irreducibility of $V$ implies either
$V_0=0$ or $V_0=V$. By definition of $(n-t)$, there exists a $v\in V$ such that
$\mathbb{I}_n^{n-t}v\neq 0$, whence $V_0=0$.
Therefore, any $0\neq v\in V$ has the property $\mathbb{I}_n^{n-t}v\neq 0$
and $\mathbb{I}_n^{n-t}v$ is $\mathbb{A}_n$-invariant.
Since $V$ is an irreducible $\mathbb{X}_n$-module we have 
$\mathbb{I}_n^{n-t}v=V$.
Accordingly, $V$ is also an irreducible $\mathbb{I}_n^{n-t}$-left module.\\
As an $\mathbb{I}_n^{n-t}$-left module the algebra $\mathbb{I}_n^{n-t}$
decomposes into a direct sum of modules that are isomorphic to
$\mathscr{M}_{\mathbb{X}_n}(\lambda)$, for $\lambda\vdash (n-t)$, i.e.
\begin{equation}\label{E:decompose}
\mathbb{I}_n^{n-t}\cong \bigoplus_{\lambda\vdash (n-t)}n_\lambda\,
                            \mathscr{M}_{\mathbb{X}_n}(\lambda),
\end{equation}
where $n_\lambda$ denotes the multiplicity of
$\mathscr{M}_{\mathbb{X}_n}(\lambda)$ in $\mathbb{I}_n^{n-t}$.
Clearly we have for any $0\neq v\in V$ the surjective morphism of 
$\mathbb{I}_n^{n-t}$-left modules
$\phi_v\colon \mathbb{I}_n^{n-t}\longrightarrow V$, given by 
$\mathfrak{a}\mapsto
\mathfrak{a}\cdot v$.
Accordingly there exists a partition $\lambda\vdash (n-t)$ and a surjective
morphism of $\mathbb{I}_n^{n-t}$-left modules induced by $\phi_v$:
\begin{equation*}
\phi_v^\lambda
\colon \mathscr{M}_{\mathbb{X}_n}(\lambda)\longrightarrow V.
\end{equation*}
Assertion (a) and (b) imply $\text{\rm ker}(\phi_v^\lambda)=
\mathscr{N}_{\mathbb{X}_n}(\lambda)$, i.e.~we have
$\mathscr{M}_{\mathbb{X}_n}(\lambda)/\mathscr{N}_{\mathbb{X}_n}(\lambda)
\cong V$
and the proof of Proposition~\ref{P:cool} is complete.
\end{proof}

The next result connects semisimplicity of $\mathbb{X}_n$ with the existence
of nontrivial morphisms between the modules 
$\mathscr{M}_{\mathbb{X}_n}(\lambda)$
and $\mathscr{M}_{\mathbb{X}_n}(\mu)$.
Indeed, if $\mathbb{X}_n$ is not semisimple, then there exists some module
$\mathscr{M}_{\mathbb{X}_n}(\mu)$, $\mu\vdash m<n$ with a nontrivial
maximal submodule $\mathscr{N}_{\mathbb{X}_n}(\mu)$.
In the following we denote by $\text{\rm Rad}(\mathbb{X}_n)$ the Jacobson 
radical
of $\mathbb{X}_n$, i.e.~$\mathbb{X}_n$ is semisimple if and only if
$\text{\rm Rad}(\mathbb{X}_n)=0$.

\begin{proposition}\label{P:cool2}
If $\mathbb{X}_n$ is not semisimple, then there exist two partitions
$\mu,\lambda$, where $\vert \mu\vert<\vert\lambda\vert \le n$ and a
short exact sequence of $\mathbb{X}_n$-modules
\begin{equation}
\diagram
0\rto & \mathscr{N}_{\mathbb{X}_n}(\lambda) \rto &
\mathscr{M}_{\mathbb{X}_n}(\lambda) \rto^{\varphi_n} &
 \mathscr{M}_{\mathbb{X}_n}(\mu). \\
\enddiagram
\end{equation}
\end{proposition}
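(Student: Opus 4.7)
The plan is to extract the pair $(\mu,\lambda)$ directly from the internal structure of the modules $\mathscr{M}_{\mathbb{X}_n}(\nu)$ established in Proposition~\ref{P:cool}, and to realize $\varphi_n$ as the composition of the canonical quotient $\mathscr{M}_{\mathbb{X}_n}(\lambda)\twoheadrightarrow L(\lambda):=\mathscr{M}_{\mathbb{X}_n}(\lambda)/\mathscr{N}_{\mathbb{X}_n}(\lambda)$ with an embedding of $L(\lambda)$ into $\mathscr{M}_{\mathbb{X}_n}(\mu)$. The first task is to translate non-semisimplicity of $\mathbb{X}_n$ into the existence of some $\mu\vdash m\leq n$ with $\mathscr{N}_{\mathbb{X}_n}(\mu)\neq 0$. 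By Proposition~\ref{P:cool}(c) every simple $\mathbb{X}_n$-module is of the form $L(\nu)$, and by \eqref{E:decompose} the filtration $\mathbb{X}_n^0\subsetneq\cdots\subsetneq\mathbb{X}_n^n$ has subquotients $\mathbb{I}_n^{n-t}\cong\bigoplus_{\lambda\vdash n-t}n_\lambda\,\mathscr{M}_{\mathbb{X}_n}(\lambda)$; were every $\mathscr{M}_{\mathbb{X}_n}(\nu)$ irreducible, then by the standard cellular/quasi-hereditary equivalence alluded to at the start of this section, $\mathbb{X}_n$ would be semisimple, contradicting the hypothesis.

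Having fixed $\mu$ with $\mathscr{N}_{\mathbb{X}_n}(\mu)\neq 0$, I pick any irreducible $\mathbb{X}_n$-submodule $W\subseteq\mathscr{N}_{\mathbb{X}_n}(\mu)$---for concreteness, the socle of $\mathscr{N}_{\mathbb{X}_n}(\mu)$---and apply Proposition~\ref{P:cool}(c) to identify $W\cong L(\lambda)$ for some $\lambda\vdash(n-t)\leq n$. The heart of the argument is to establish the strict inequality $|\mu|<|\lambda|$. On one hand, the definition of $\mathscr{N}_{\mathbb{X}_n}(\mu)$ gives $\mathbb{I}_n^{|\mu|}\,W=0$; moreover, for any $r<|\mu|$, $\mathfrak{b}\in\mathscr{I}_n^r$ and $\mathfrak{a}\in\mathscr{I}_n^{|\mu|}$ the product $\mathfrak{b}\mathfrak{a}$ has at most $r<|\mu|$ vertical arcs and therefore vanishes in $\mathbb{I}_n^{|\mu|}$, whence $\mathbb{I}_n^r$ annihilates all of $\mathscr{M}_{\mathbb{X}_n}(\mu)$ and in particular $W$. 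Thus $\mathbb{I}_n^r\,W=0$ for every $r\leq|\mu|$. On the other hand, the Claim inside the proof of Proposition~\ref{P:cool}(a) shows $\mathbb{I}_n^{|\lambda|}\cdot w=L(\lambda)$ for every $0\neq w\in W\cong L(\lambda)$, so $\mathbb{I}_n^{|\lambda|}$ acts non-trivially on $W$. Combining the two observations excludes $|\lambda|\leq|\mu|$ and forces $|\lambda|>|\mu|$.

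With $(\mu,\lambda)$ chosen, Proposition~\ref{P:cool}(a),(b) supplies the canonical surjection $\pi\colon\mathscr{M}_{\mathbb{X}_n}(\lambda)\twoheadrightarrow L(\lambda)$ with $\ker\pi=\mathscr{N}_{\mathbb{X}_n}(\lambda)$. Composing $\pi$ with the isomorphism $L(\lambda)\xrightarrow{\,\sim\,}W$ and the inclusion $W\hookrightarrow\mathscr{M}_{\mathbb{X}_n}(\mu)$ defines the desired homomorphism $\varphi_n\colon\mathscr{M}_{\mathbb{X}_n}(\lambda)\longrightarrow\mathscr{M}_{\mathbb{X}_n}(\mu)$ with $\ker\varphi_n=\mathscr{N}_{\mathbb{X}_n}(\lambda)$, producing the short exact sequence of the statement. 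The principal obstacle I anticipate is the strict inequality $|\lambda|>|\mu|$: only by combining the vanishing condition defining $\mathscr{N}_{\mathbb{X}_n}(\mu)$ with the transitivity of the $\mathbb{I}_n^{|\lambda|}$-action on $L(\lambda)$ extracted from the Claim in the proof of Proposition~\ref{P:cool} can one rule out the degenerate case $|\lambda|=|\mu|$; without the latter, only the weaker bound $|\lambda|\geq|\mu|$ is accessible.
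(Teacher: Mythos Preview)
Your argument is correct and follows essentially the same architecture as the paper's proof: locate a $\mu$ with $\mathscr{N}_{\mathbb{X}_n}(\mu)\neq 0$, pick an irreducible submodule $W\subseteq\mathscr{N}_{\mathbb{X}_n}(\mu)$, identify $W\cong L(\lambda)$ via Proposition~\ref{P:cool}(c), and compare the levels at which the $\mathbb{I}_n^{r}$ act to force $|\lambda|>|\mu|$.

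The one substantive difference is your treatment of the contrapositive step ``all $\mathscr{M}_{\mathbb{X}_n}(\nu)$ irreducible $\Rightarrow$ $\mathbb{X}_n$ semisimple''. You delegate this to the quasi-hereditary machinery mentioned in the preamble, whereas the paper deliberately avoids that and gives a self-contained argument: using $\mathbb{I}_n^m\cong\bigoplus_{\mu\vdash m}n_\mu\mathscr{M}_{\mathbb{X}_n}(\mu)$, each subquotient $\mathbb{X}_n^m/\mathbb{X}_n^{m-1}$ is semisimple, whence $(\mathrm{Rad}(\mathbb{X}_n^m)+\mathbb{X}_n^{m-1})/\mathbb{X}_n^{m-1}=0$ and $\mathrm{Rad}(\mathbb{X}_n^m)\subseteq\mathrm{Rad}(\mathbb{X}_n^{m-1})$, so $\mathrm{Rad}(\mathbb{X}_n)\subseteq\mathrm{Rad}(\mathbb{X}_n^0)=0$. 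Your appeal to quasi-hereditarity is legitimate but less self-contained; the paper's filtration argument buys independence from that background. A second, minor difference: the paper first fixes $m_0$ as the least level where $\mathbb{X}_n^{m_0}$ acts nontrivially on $\mathscr{N}_{\mathbb{X}_n}(\mu)$ and then extracts an irreducible $\mathbb{I}_n^{m_0}$-submodule, automatically pinning $|\lambda|=m_0>|\mu|$; your route of taking an irreducible $\mathbb{X}_n$-submodule first and then bounding $|\lambda|$ via the Claim is equivalent and arguably cleaner.
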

\begin{proof}
Suppose first that $\mathscr{M}_{\mathbb{X}_n}(\mu)$ is for any partition
$\mu\vdash m$,
$m<n$, irreducible. We claim that $\mathbb{X}_n$ is in this case semisimple.
To this end we observe that for $\mu\vdash n$, we have
$\mathscr{M}_{\mathbb{X}_n}(\mu)\cong S^\mu$, i.e.~for arbitrary partition
$\mu$, the module $\mathscr{M}_{\mathbb{X}_n}(\mu)$ is irreducible.
In view of
\begin{equation*}
\mathbb{I}_n^m\cong 
\bigoplus_{\mu\vdash m}n_\mu\,\mathscr{M}_{\mathbb{X}_n}(\mu),
\end{equation*}
for any $0\le m\le n$, the $F$-algebras
$\mathbb{X}_n^m/\mathbb{X}_n^{m-1}\cong \mathbb{I}_n^m$ and in particular
$\mathbb{X}_n^0\cong \mathbb{I}_n^0$, are semisimple.
Since $\text{\rm Rad}(\mathbb{X}_n^m)$ is a nilpotent ideal so is
$(\text{\rm Rad}(\mathbb{X}_n^m)+\mathbb{X}_n^{m-1})/\mathbb{X}_n^{m-1}$
and we obtain
\begin{equation*}
(\text{\rm Rad}(\mathbb{X}_n^m)+\mathbb{X}_n^{m-1})/\mathbb{X}_n^{m-1}\subset
\text{\rm Rad}(\mathbb{X}_n^m/\mathbb{X}_n^{m-1})=0.
\end{equation*}
We next observe using
$\text{\rm Rad}(\mathbb{X}_n^m)\cap \mathbb{X}_n^{m-1}=
\text{\rm Rad}(\mathbb{X}_n^{m-1})$
\begin{eqnarray*}
(\text{\rm Rad}(\mathbb{X}_n^m)+\mathbb{X}_n^{m-1})/\mathbb{X}_n^{m-1}
& \cong & \text{\rm Rad}(\mathbb{X}_n^m)/(\text{\rm Rad}(\mathbb{X}_n^m)\cap
\mathbb{X}_n^{m-1})\\
&\cong & \text{\rm Rad}(\mathbb{X}_n^m)/\text{\rm Rad}
(\mathbb{X}_n^{m-1}).
\end{eqnarray*}
Consequently we have for $1\le m\le n$ the inclusion $\text{\rm Rad}(
\mathbb{X}_n^m)\subset\text{\rm Rad}(\mathbb{X}_n^{m-1})$, which implies
$\text{\rm Rad}(\mathbb{X}_n)\subset \text{\rm Rad}(
\mathbb{X}^{0}_n)=0$, i.e.~$\mathbb{X}_n$ is semisimple.\\
Thus, if $\mathbb{X}_n$
is not semisimple, there exists a partition $\mu\vdash m$, $m<n$, such that
$\mathscr{M}_{\mathbb{X}_n}(\mu)$ is not irreducible. Then there exists
according to Proposition~\ref{P:cool}, assertion (b), the nontrivial,
maximal submodule $\mathscr{N}_{\mathbb{X}_n}(\mu)\subset
\mathscr{M}_{\mathbb{X}_n}(\mu)$.
Let $m_0$ be the smallest integer such that $\mathbb{X}_n^{m_0}$ acts 
nontrivially
on $\mathscr{N}_{\mathbb{X}_n}(\mu)$. By definition we have for any $v\in
\mathscr{N}_{\mathbb{X}_n}(\mu)$,
$\mathbb{I}_n^mv=0$, whence $m<m_0$. $\mathscr{N}_{\mathbb{X}_n}(\mu)$ is 
then a nontrivial
$\mathbb{I}_n^{m_0}$-left module and there exists an irreducible
$\mathbb{I}_n^{m_0}$-submodule $W\subset\mathscr{N}_{\mathbb{X}_n}(\mu)$.
According to Proposition~\ref{P:cool}, assertion (c), $W$ is isomorphic to
$\mathscr{M}_{\mathbb{X}_n}(\lambda)/\mathscr{N}_{\mathbb{X}_n}(\lambda)$ for
some $\lambda\vdash m_0$,
i.e.~$\vert \mu\vert <\vert \lambda\vert \le n$.
Therefore there exists a partition $\lambda$ and a nontrivial morphism of
$\mathbb{X}_n$-modules $\varphi_n\colon \mathscr{M}_{\mathbb{X}_n}(\lambda)
\longrightarrow
\mathscr{M}_{\mathbb{X}_n}(\mu)$, such that $\text{\rm ker}(\varphi_n)=
\mathscr{N}_{\mathbb{X}_n}(\lambda)$
and $\vert \mu\vert <\vert \lambda\vert \le n$ and
the proposition follows.
\end{proof}


\section{Restriction and induction}\label{S:res}


We shall begin by showing that $\mathbb{A}_n$ has the generators
$\mathfrak{g}_{i-1},\mathfrak{e}_{i-1},\mathfrak{u}_{j}$, $2\le i\le
n$, $1\le j\le n$.

\begin{lemma}\label{L:rep}
Any diagram $\mathfrak{a}\in\mathscr{A}_{n}$ is either contained in
$\mathscr{A}_{n-1}$ or of the form
\begin{equation}
\mathfrak{a} =
\mathfrak{a}'\, \mathfrak{x}\,\mathfrak{b}',
\quad
\mathfrak{a}',\mathfrak{b}'\in\mathscr{A}_{n-1},\
\mathfrak{x}\in\{\mathfrak{g}_{n-1},\mathfrak{e}_{n-1},\mathfrak{u}_{n}\} \ .
\end{equation}
In particular, we have $\mathbb{A}_n=\langle S_n,\mathfrak{e}_{n-1},
\mathfrak{u}_{n}\rangle$ and $\mathbb{L}_n=\langle S_n,\mathfrak{u}_n\rangle$.
\end{lemma}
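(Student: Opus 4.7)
The plan is to do a case analysis on the local configuration of $\mathfrak{a}$ at the vertices $n$ and $n'$, using the involution $\mathfrak{a}\mapsto \mathfrak{a}^*$ to collapse top/bottom symmetric situations, and then to deduce the generator presentations by induction on $n$.

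If $\mathfrak{a}$ contains the vertical arc $(n,n')$ then stripping it yields a diagram in $\mathscr{A}_{n-1}$, so $\mathfrak{a}$ lies in the image of $\epsilon_n$ and we are done. Otherwise, the possible configurations at $n,n'$ fall into three qualitative types, each matched by one of the proposed middle factors: (H) $n$ (or $n'$) sits on a horizontal arc, which we handle with $\mathfrak{x}=\mathfrak{e}_{n-1}$; (V) $n$ (or $n'$) sits on a vertical arc whose other endpoint is not $n'$ (resp.\ not $n$), which we handle with $\mathfrak{x}=\mathfrak{g}_{n-1}$; and (L) both $n$ and $n'$ are isolated, which we handle with $\mathfrak{x}=\mathfrak{u}_n$. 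Asymmetric hybrids (for instance, $n$ isolated but $n'$ in a vertical arc) are absorbed into a single canonical representative by relabeling the relevant arc to involve position $n-1$ or $n$ through an appropriate permutation in $S_n$ that we bundle into $\mathfrak{a}'$ or $\mathfrak{b}'$.

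In each case I would build $\mathfrak{a}'\in\mathscr{A}_{n-1}$ so that its top row records $\text{top}(\mathfrak{a})$ with a small modification near position $n-1$, and $\mathfrak{b}'\in\mathscr{A}_{n-1}$ so that its bottom row records $\text{bot}(\mathfrak{a})$ analogously; the middle factor $\mathfrak{x}$ then supplies precisely the missing horizontal arc, crossing, or loop at $n,n'$. Verifying $\mathfrak{a}'\mathfrak{x}\mathfrak{b}'=\mathfrak{a}$ reduces to tracing the components of $G(\mathfrak{a}'\mathfrak{x},\mathfrak{b}')$ and checking that no internal closed component is formed, so that $\ell(\mathfrak{a}'\mathfrak{x},\mathfrak{b}')=0$ and no extraneous power of $x$ appears. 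I expect the main obstacle to be exactly this bookkeeping in the presence of loops: an isolated middle vertex produced at the seam between $\mathfrak{a}'$ and $\mathfrak{x}$, or between $\mathfrak{x}$ and $\mathfrak{b}'$, can create a spurious inner cycle or ``line with loops'' unless $\mathfrak{a}'$ and $\mathfrak{b}'$ are chosen so that all such loops remain on the external boundary.

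For the generator statements I proceed by induction on $n$. The base $n=1$ is immediate since $\mathscr{A}_1=\{1,\mathfrak{u}_1\}$ and $\mathscr{L}_1=\{1\}$. For the inductive step the first part of the lemma writes every $\mathfrak{a}\in\mathscr{A}_n$ either as an element of $\mathbb{A}_{n-1}$ or as $\mathfrak{a}'\mathfrak{x}\mathfrak{b}'$ with $\mathfrak{a}',\mathfrak{b}'\in\mathbb{A}_{n-1}$ and $\mathfrak{x}\in\{\mathfrak{g}_{n-1},\mathfrak{e}_{n-1},\mathfrak{u}_n\}$. Since $\mathfrak{g}_{n-1}$ is the transposition $(n-1,n)\in S_n$, and since the inductive-hypothesis generators $\mathfrak{e}_{n-2},\mathfrak{u}_{n-1}$ of $\mathbb{A}_{n-1}$ are obtained as $S_n$-conjugates of $\mathfrak{e}_{n-1},\mathfrak{u}_n$, one obtains $\mathbb{A}_n=\langle S_n,\mathfrak{e}_{n-1},\mathfrak{u}_n\rangle$. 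For $\mathbb{L}_n$ the same argument applies with only Cases (V) and (L) occurring, since $\mathscr{L}_n$-diagrams contain no horizontal arcs, yielding $\mathbb{L}_n=\langle S_n,\mathfrak{u}_n\rangle$.
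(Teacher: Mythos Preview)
Your approach is essentially the paper's: both do a case analysis on the local configuration at $n,n'$ and then run induction for the generator statements. The paper organizes the cases by the number and placement of loops at $n,n'$, introducing four auxiliary ``hybrid'' diagrams $\mathfrak{d}_1,\dots,\mathfrak{d}_4$ (and their $*$-images) to handle the asymmetric situations you allude to, and then uses explicit relations such as $\mathfrak{g}_{n-1}\mathfrak{u}_{n-1}=\mathfrak{u}_n\mathfrak{g}_{n-1}$ and $\mathfrak{u}_n\mathfrak{e}_{n-1}=\mathfrak{u}_{n-1}\mathfrak{e}_{n-1}$ to absorb the extra factors back into $\mathfrak{a}',\mathfrak{b}'\in\mathscr{A}_{n-1}$; you instead claim the direct construction of $\mathfrak{a}',\mathfrak{b}'$ in each case. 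Both routes are valid, and yours is a reasonable sketch of the same argument.

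One small correction: your base case $\mathscr{L}_1=\{1\}$ is wrong. Since $\mathscr{L}_n$ consists of diagrams with no horizontal arcs, for $n=1$ both the identity and $\mathfrak{u}_1$ qualify, so $\mathscr{L}_1=\{1,\mathfrak{u}_1\}=\mathscr{A}_1$. This does not affect the induction, since $\mathfrak{u}_1$ is already among the claimed generators.
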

\begin{proof}
Any diagram not contained in $\mathscr{A}_{n-1}$ has either (a) none or
two loops at the vertices $n,n'$, (b) exactly one loop over $n$($n'$)
and at least one loop over some vertex $i'$($i$), where $i<n$ or
(c) exactly one loop over $n$($n'$) and no loops
over $i'$($i$), where $i<n$. From this we derive
\begin{equation}\label{E:OP}
\mathfrak{a} = \mathfrak{a}'\, \mathfrak{y}\,\mathfrak{b}' \quad
\text{\rm where}\quad \mathfrak{a}',\mathfrak{b}'\in\mathscr{A}_{n-1}\
\text{\rm and}\
\mathfrak{y}\in
\begin{cases}
\{\mathfrak{g}_{n-1},\mathfrak{e}_{n-1},\mathfrak{u}_{n}\}
& \text{\rm  {\rm (a)}} \\
\{\mathfrak{d}_1,\mathfrak{d}_1^*,
\mathfrak{d}_2,\mathfrak{d}_2^*\}
& \text{\rm   {\rm (b)}} \\
\{\mathfrak{d}_3,\mathfrak{d}_3^*,
\mathfrak{d}_4,\mathfrak{d}_4^*\}
& \text{\rm  {\rm (c)}}
\end{cases}
\end{equation}
where
\begin{center}
\begin{picture}(280,20)
\put(-2,30){\tiny{$1$}}
\put(59,30){\tiny{$n-2$}}\put(82,30){\tiny{$n-1$}}
\put(108,30){\tiny{$n$}} \put(219,30){\tiny{$1$}}
\put(236,30){\tiny{$(n-3)'$}} \put(264,30){\tiny{$n-2$}}
\put(287,30){\tiny{$n-1$}}\put(313,30){\tiny{$n$}}

\put(0,0){\circle*{5}} 
\put(54,0){\circle*{5}} \put(72,0){\circle*{5}}
\put(90,0){\circle*{5}}\put(108,0){\circle*{5}}
\put(221,0){\circle*{5}}
\put(257,0){\circle*{5}}\put(275,0){\circle*{5}}
\put(293,0){\circle*{5}}\put(311,0){\circle*{5}}
\put(0,20){\circle*{5}} 
\put(54,20){\circle*{5}} \put(72,20){\circle*{5}}
\put(90,20){\circle*{5}} \put(108,20){\circle*{5}}
\put(221,20){\circle*{5}}
\put(257,20){\circle*{5}}\put(275,20){\circle*{5}}
\put(293,20){\circle*{5}}\put(311,20){\circle*{5}}
\put(-2,-12){\tiny{$1'$}}
\put(54,-12){\tiny{$(n-2)'$}}\put(82,-12){\tiny{$(n-1)'$}}
\put(108,-12){\tiny{$n'$}} \put(219,-12){\small{$1$}}
\put(236,-12){\tiny{$(n-3)'$}} \put(262,-12){\tiny{$(n-2)'$}}
\put(287,-12){\tiny{$(n-1)'$}}\put(313,-12){\tiny{$n'$}}
\put(-30,8){\small{$\mathfrak{d}_1=$}}
\put(183,8){\small{$\mathfrak{d}_2=$}}
\put(20,-3){$\cdots$}\put(20,17){$\cdots$}
\put(233,-3){$\cdots$}\put(233,17){$\cdots$}
\qbezier[40](0,20)(0,20)(0,0) \qbezier[40](54,20)(54,10)(54,0)
\qbezier[40](72,20)(81,10)(90,20)\qbezier[40](90,0)(99,20)(108,0)
\qbezier[40](221,20)(221,10)(221,0)
\qbezier[40](257,20)(257,10)(257,0)
\qbezier[40](275,20)(275,10)(275,0)
\qbezier[40](291,20)(300,10)(309,0) \qbezier[50](72,0)(64,4)(72,8)
\qbezier[50](72,0)(80,4)(72,8) \qbezier[50](108,20)(100,24)(108,28)
\qbezier[50](108,20)(116,24)(108,28)\qbezier[50](293,0)(285,4)(293,8)
\qbezier[50](293,0)(301,4)(293,8)\qbezier[50](311,20)(303,24)(311,28)
\qbezier[50](311,20)(319,24)(311,28)
\end{picture}
\end{center}
\begin{center}
\begin{picture}(280,50)
\put(-2,30){\tiny{$1$}}
\put(59,30){\tiny{$n-2$}}\put(82,30){\tiny{$n-1$}}
\put(108,30){\tiny{$n$}} \put(219,30){\tiny{$1$}}
\put(236,30){\tiny{$(n-3)'$}} \put(264,30){\tiny{$n-2$}}
\put(287,30){\tiny{$n-1$}}\put(313,30){\tiny{$n$}}

\put(0,0){\circle*{5}} 
\put(54,0){\circle*{5}} \put(72,0){\circle*{5}}
\put(90,0){\circle*{5}}\put(108,0){\circle*{5}}
\put(221,0){\circle*{5}}
\put(257,0){\circle*{5}}\put(275,0){\circle*{5}}
\put(293,0){\circle*{5}}\put(311,0){\circle*{5}}
\put(0,20){\circle*{5}} 
\put(54,20){\circle*{5}} \put(72,20){\circle*{5}}
\put(90,20){\circle*{5}} \put(108,20){\circle*{5}}
\put(221,20){\circle*{5}}
\put(257,20){\circle*{5}}\put(275,20){\circle*{5}}
\put(293,20){\circle*{5}}\put(311,20){\circle*{5}}
\put(-2,-12){\tiny{$1'$}}
\put(54,-12){\tiny{$(n-2)'$}}\put(82,-12){\tiny{$(n-1)'$}}
\put(108,-12){\tiny{$n'$}} \put(219,-12){\small{$1$}}
\put(236,-12){\tiny{$(n-3)'$}} \put(262,-12){\tiny{$(n-2)'$}}
\put(287,-12){\tiny{$(n-1)'$}}\put(313,-12){\tiny{$n'$}}
\put(-30,8){\small{$\mathfrak{d}_3=$}}
\put(183,8){\small{$\mathfrak{d}_4=$}}
\put(20,-3){$\cdots$}\put(20,17){$\cdots$}
\put(233,-3){$\cdots$}\put(233,17){$\cdots$}
\qbezier[40](0,20)(0,20)(0,0) \qbezier[40](54,20)(54,10)(54,0)
\qbezier[140](72,20)(90,10)(108,0) \qbezier[40](72,0)(81,20)(90,0)
\qbezier[40](221,20)(221,10)(221,0)
\qbezier[40](257,20)(257,10)(257,0)
\qbezier[40](275,20)(275,10)(275,0)
\qbezier[40](293,0)(301,20)(311,0) \qbezier[40](90,20)(82,24)(90,28)
\qbezier[40](90,20)(98,24)(90,28)\qbezier[40](293,20)(285,24)(293,28)
\qbezier[40](293,20)(301,24)(293,28)\qbezier[40](108,20)(100,24)(108,28)
\qbezier[40](108,20)(116,24)(108,28)\qbezier[40](311,20)(303,24)(311,28)
\qbezier[40](311,20)(319,24)(311,28)
\end{picture}
\end{center}
We can express the diagrams $\mathfrak{d}_1,\dots,\mathfrak{d}_4$ via
the generators $\mathfrak{g}_i$, $\mathfrak{e}_i$ and $\mathfrak{u}_i$
as follows
\begin{eqnarray*}
\mathfrak{d}_1  =  \mathfrak{e}_{n-2}\,\mathfrak{u}_{n-1}\,
                     \mathfrak{u}_{n-2}\, \mathfrak{e}_{n-1}, \quad
\mathfrak{d}_2  =  \mathfrak{u}_n\,\mathfrak{g}_{n-1}, \quad
\mathfrak{d}_3  =  \mathfrak{u}_n\,\mathfrak{e}_{n-1}\,\mathfrak{e}_{n-2},
\quad \mathfrak{d}_4  =  \mathfrak{u}_{n-1}\, \mathfrak{e}_{n-1}.
\end{eqnarray*}
We next observe that the relations
\begin{eqnarray*}\label{E:basic1}
\,\,\,\,\,\,\,\,\,\,\mathfrak{u}_n\,\mathfrak{e}_{n-1}=
\mathfrak{u}_{n-1}\,\mathfrak{e}_{n-1}\,\,\,\,\,\,\,\,\,\,
\,\,\,\,\,\,\,\,\,\,\,\,\, &
\mathfrak{g}_{n-1}\,\mathfrak{u}_{n-1}=\mathfrak{u}_{n}\,
\mathfrak{g}_{n-1}\,\,\,\,\,\,\,\,\,\,\,\,\,\,\,\,\,\,\,\,\,\,\,\,\,\,\,\,
&
\mathfrak{u}_{n-1}\,\mathfrak{g}_{n-1}=\mathfrak{g}_{n-1}\,\mathfrak{u}_{n}
\end{eqnarray*}
\begin{center}
\setlength{\unitlength}{0,72pt}
\begin{picture}(560,60)
\put(0,0){\circle*{5}}\put(32,0){\circle*{5}}\put(48,0){\circle*{5}}
\put(64,0){\circle*{5}}\put(96,0){\circle*{5}}\put(128,0){\circle*{5}}
\put(144,0){\circle*{5}}\put(160,0){\circle*{5}}
\put(200,0){\circle*{5}}\put(232,0){\circle*{5}}\put(248,0){\circle*{5}}
\put(264,0){\circle*{5}}\put(296,0){\circle*{5}}\put(328,0){\circle*{5}}
\put(344,0){\circle*{5}}\put(360,0){\circle*{5}}
\put(400,0){\circle*{5}}\put(432,0){\circle*{5}}\put(448,0){\circle*{5}}
\put(464,0){\circle*{5}}\put(496,0){\circle*{5}}\put(528,0){\circle*{5}}
\put(544,0){\circle*{5}}\put(560,0){\circle*{5}}
\put(0,20){\circle*{5}}\put(32,20){\circle*{5}}\put(48,20){\circle*{5}}
\put(64,20){\circle*{5}}\put(96,20){\circle*{5}}\put(128,20){\circle*{5}}
\put(144,20){\circle*{5}}\put(160,20){\circle*{5}}
\put(200,20){\circle*{5}}\put(232,20){\circle*{5}}\put(248,20){\circle*{5}}
\put(264,20){\circle*{5}}\put(296,20){\circle*{5}}\put(328,20){\circle*{5}}
\put(344,20){\circle*{5}}\put(360,20){\circle*{5}}
\put(400,20){\circle*{5}}\put(432,20){\circle*{5}}\put(448,20){\circle*{5}}
\put(464,20){\circle*{5}}\put(496,20){\circle*{5}}\put(528,20){\circle*{5}}
\put(544,20){\circle*{5}}\put(560,20){\circle*{5}}
\put(0,40){\circle*{5}}\put(32,40){\circle*{5}}\put(48,40){\circle*{5}}
\put(64,40){\circle*{5}}\put(96,40){\circle*{5}}\put(128,40){\circle*{5}}
\put(144,40){\circle*{5}}\put(160,40){\circle*{5}}
\put(200,40){\circle*{5}}\put(232,40){\circle*{5}}\put(248,40){\circle*{5}}
\put(264,40){\circle*{5}}\put(296,40){\circle*{5}}\put(328,40){\circle*{5}}
\put(344,40){\circle*{5}}\put(360,40){\circle*{5}}
\put(400,40){\circle*{5}}\put(432,40){\circle*{5}}\put(448,40){\circle*{5}}
\put(464,40){\circle*{5}}\put(496,40){\circle*{5}}\put(528,40){\circle*{5}}
\put(544,40){\circle*{5}}\put(560,40){\circle*{5}}
\put(0,60){\circle*{5}}\put(32,60){\circle*{5}}\put(48,60){\circle*{5}}
\put(64,60){\circle*{5}}\put(96,60){\circle*{5}}\put(128,60){\circle*{5}}
\put(144,60){\circle*{5}}\put(160,60){\circle*{5}}
\put(200,60){\circle*{5}}\put(232,60){\circle*{5}}\put(248,60){\circle*{5}}
\put(264,60){\circle*{5}}\put(296,60){\circle*{5}}\put(328,60){\circle*{5}}
\put(344,60){\circle*{5}}\put(360,60){\circle*{5}}
\put(400,60){\circle*{5}}\put(432,60){\circle*{5}}\put(448,60){\circle*{5}}
\put(464,60){\circle*{5}}\put(496,60){\circle*{5}}\put(528,60){\circle*{5}}
\put(544,60){\circle*{5}}\put(560,60){\circle*{5}}
\qbezier[5](0,20)(0,30)(0,40)\qbezier[5](32,20)(32,30)(32,40)
\qbezier[5](48,20)(48,30)(48,40)\qbezier[5](64,20)(64,30)(64,40)
\qbezier[5](96,20)(96,30)(96,40)\qbezier[5](128,20)(128,30)(128,40)
\qbezier[5](144,20)(144,30)(144,40)\qbezier[5](160,20)(160,30)(160,40)
\qbezier[5](200,20)(200,30)(200,40)\qbezier[5](232,20)(232,30)(232,40)
\qbezier[5](248,20)(248,30)(248,40)\qbezier[5](264,20)(264,30)(264,40)
\qbezier[5](296,20)(296,30)(296,40)\qbezier[5](328,20)(328,30)(328,40)
\qbezier[5](344,20)(344,30)(344,40)\qbezier[5](360,20)(360,30)(360,40)
\qbezier[5](400,20)(400,30)(400,40)\qbezier[5](432,20)(432,30)(432,40)
\qbezier[5](448,20)(448,30)(448,40)\qbezier[5](464,20)(464,30)(464,40)
\qbezier[5](496,20)(496,30)(496,40)\qbezier[5](528,20)(528,30)(528,40)
\qbezier[5](544,20)(544,30)(544,40)\qbezier[5](560,20)(560,30)(560,40)
\qbezier[100](0,0)(0,10)(0,20)\qbezier[100](0,40)(0,50)(0,60)
\qbezier[100](32,0)(32,10)(32,20)\qbezier[100](32,40)(32,50)(32,60)
\qbezier[100](48,0)(56,18)(64,0)\qbezier[100](48,20)(56,38)(64,20)
\qbezier[100](48,40)(48,50)(48,60)\qbezier[100](64,40)(54,44)(64,48)
\qbezier[100](64,40)(74,44)(64,48)\qbezier[100](64,60)(54,64)(64,68)
\qbezier[100](64,60)(74,64)(64,68)\qbezier[100](96,0)(96,10)(96,20)
\qbezier[100](96,40)(96,50)(96,60)\qbezier[100](128,40)(128,50)(128,60)
\qbezier[100](128,0)(128,10)(128,20)\qbezier[100](144,0)(152,18)(160,0)
\qbezier[100](144,20)(152,38)(160,20)\qbezier[100](144,40)(134,44)(144,48)
\qbezier[100](144,40)(154,44)(144,48)\qbezier[100](144,60)(134,64)(144,68)
\qbezier[100](144,60)(154,64)(144,68)\qbezier[100](160,40)(160,50)(160,60)
\qbezier[100](200,0)(200,10)(200,20)\qbezier[100](200,40)(200,50)(200,60)
\qbezier[100](232,0)(232,10)(232,20)\qbezier[100](232,40)(232,50)(232,60)
\qbezier[100](248,60)(256,50)(264,40)\qbezier[100](264,60)(256,50)(248,40)
\qbezier[100](248,0)(238,4)(248,8)\qbezier[100](248,0)(258,4)(248,8)
\qbezier[100](248,20)(238,24)(248,28)\qbezier[100](248,20)(258,24)(248,28)
\qbezier[100](264,0)(264,10)(264,20)
\qbezier[100](296,0)(296,10)(296,20)\qbezier[100](296,40)(296,50)(296,60)
\qbezier[100](328,0)(328,10)(328,20)\qbezier[100](328,40)(328,50)(328,60)
\qbezier[100](344,0)(352,10)(360,20)\qbezier[100](344,20)(352,10)(360,0)
\qbezier[100](344,40)(344,50)(344,60)\qbezier[100](360,40)(350,44)(360,48)
\qbezier[100](360,40)(370,44)(360,48)
\qbezier[100](360,60)(350,64)(360,68)
\qbezier[100](360,60)(370,64)(360,68)
\qbezier[100](400,0)(400,10)(400,20)\qbezier[100](400,40)(400,50)(400,60)
\qbezier[100](432,0)(432,10)(432,20)\qbezier[100](432,40)(432,50)(432,60)
\qbezier[100](448,40)(438,44)(448,48)\qbezier[100](448,40)(458,44)(448,48)
\qbezier[100](448,60)(438,64)(448,68)\qbezier[100](448,60)(458,64)(448,68)
\qbezier[100](464,40)(464,50)(464,60)\qbezier[100](448,0)(456,10)(464,20)
\qbezier[100](448,20)(456,10)(464,0)\qbezier[100](496,0)(496,10)(496,20)
\qbezier[100](496,40)(496,50)(496,60)\qbezier[100](528,0)(528,10)(528,20)
\qbezier[100](528,40)(528,50)(528,60)\qbezier[100](544,60)(552,50)(560,40)
\qbezier[100](544,40)(552,50)(560,60)\qbezier[100](544,0)(544,10)(544,20)
\qbezier[100](560,0)(550,4)(560,8)\qbezier[100](560,0)(570,4)(560,8)
\qbezier[100](560,20)(550,24)(560,28)\qbezier[100](560,20)(570,24)(560,28)
\put(75,25){$=$}\put(275,25){$=$}\put(475,25){$=$}
\put(10,10){$\dots$}\put(10,50){$\dots$}\put(210,10){$\dots$}
\put(210,50){$\dots$} \put(105,10){$\dots$}\put(105,50){$\dots$}
\put(305,10){$\dots$}\put(305,50){$\dots$}
\put(505,10){$\dots$}\put(505,50){$\dots$}
\put(410,10){$\dots$}\put(410,50){$\dots$}
\end{picture}
\end{center}
imply eq.~(\ref{E:OP}) from which the lemma follows.
\end{proof}

The next theorem analyzes the restriction in $\mathbb{A}_n$ and
follows the ideas of Doran {\it et al.} \cite{Doran} in the case of 
$\mathbb{B}_n$. We find the following new phenomenon for $\mathbb{A}_n$: 
for $\lambda\vdash (n-t)$, where $t\ge 1$, there exists an embedding of 
$\mathscr{M}_{\mathbb{A}_{n-1}}(\lambda)$ into 
$\text{\rm res}_{n-1}(\mathscr{M}_{\mathbb{A}_n}(\lambda))$.
Such an embedding does not exist for $\mathbb{B}_n$. We shall employ
it in Lemma~\ref{L:one} in order to show that if $\text{\rm
hom}_{\mathbb{X}_n}(\mathscr{M}_{\mathbb{X}_n}(\lambda),
\mathscr{M}_{\mathbb{X}_n}(\mu))\neq 0$ then we can, without loss of
generality, assume that $\lambda\vdash n$.
\begin{theorem}\label{T:realdeal}
Let $n,t\in\mathbb{N}$ and $\lambda\vdash (n-t)$ where $1\le  t \le n$. 
Then there exists the exact sequence of
$\mathbb{A}_{n-1}$-modules
\begin{eqnarray}\label{E:res}
0\longrightarrow \bigoplus_{\alpha\sqsubseteq \lambda }
\mathscr{M}_{\mathbb{A}_{n-1}}(\alpha)
\longrightarrow \text{\rm res}_{n-1}(\mathscr{M}_{\mathbb{A}_n}(\lambda))
\longrightarrow \bigoplus_{\lambda\sqsubset \beta}
\mathscr{M}_{\mathbb{A}_{n-1}}(\beta)
\longrightarrow 0.
\end{eqnarray}
\end{theorem}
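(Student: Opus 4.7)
I will construct the short exact sequence by filtering $V=\text{\rm res}_{n-1}(\mathscr{M}_{\mathbb{A}_n}(\lambda))$ according to the configuration of the top vertex $n$. Each basis diagram $\mathfrak{a}\otimes v$ with $\mathfrak{a}\in\mathbb{I}_n^{n-t}\mathfrak{u}_{n,t}$ has top vertex $n$ in exactly one of three types: \textbf{(A)} a loop at $n$; \textbf{(B)} a horizontal arc from $n$ to some $i\in[n-1]$; or \textbf{(C)} a vertical arc from $n$ to some $j'\in[n-t]'$. The first step is to show that the span $W$ of the type (A) and (C) diagrams is an $\mathbb{A}_{n-1}$-submodule. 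This follows by a direct path-tracing argument in $G(\mathfrak{b},\mathfrak{a})$: for $\mathfrak{b}\in\mathbb{A}_{n-1}$ embedded via $\epsilon_n$, the arc $(n,n')$ of $\mathfrak{b}$ routes the top-$n$ vertex of $\mathfrak{b}\mathfrak{a}$ through middle $n$ directly to the top-$n$ vertex of $\mathfrak{a}$. In type (A) the path terminates immediately at the loop (so (A) is preserved); in type (C) the path descends along the existing vertical arc to a bottom vertex of $\mathfrak{a}$ (so (C) is preserved, with the same bottom endpoint). By contrast a type (B) diagram may leave $W$, which is why the quotient is nontrivial.

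The second step decomposes $W=W_A\oplus W_C$ (both pieces are $\mathbb{A}_{n-1}$-invariant by the same argument) and identifies each summand. For $W_A$, simply deleting the two loops at $n$ and $n'$ gives an $\mathbb{A}_{n-1}$-isomorphism onto $\mathbb{I}_{n-1}^{n-t}\mathfrak{u}_{n-1,t-1}\otimes_{S_{n-t}}S^\lambda=\mathscr{M}_{\mathbb{A}_{n-1}}(\lambda)$, using $n-t=(n-1)-(t-1)$. For $W_C$ I use the right $S_{n-t}$-action to normalize the vertical arc from $n$ so that it terminates at $(n-t)'$; deleting $n$ and $n'$ and reinterpreting $(n-t)'$ as a loop leaves a diagram in $\mathbb{I}_{n-1}^{n-t-1}\mathfrak{u}_{n-1,t}$. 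Since the stabilizer of $(n-t)'$ in $S_{n-t}$ is $S_{n-t-1}$, the $S^\lambda$ tensor factor is replaced by $\text{\rm res}_{S_{n-t-1}}^{S_{n-t}}S^\lambda$, and the branching rule~\eqref{E:branch1} yields $W_C\cong\bigoplus_{\alpha\sqsubset\lambda}\mathscr{M}_{\mathbb{A}_{n-1}}(\alpha)$. Combining the two pieces gives $W\cong\bigoplus_{\alpha\sqsubseteq\lambda}\mathscr{M}_{\mathbb{A}_{n-1}}(\alpha)$.

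The third step identifies the quotient $V/W$, spanned by the type (B) diagrams, with $\bigoplus_{\lambda\sqsubset\beta}\mathscr{M}_{\mathbb{A}_{n-1}}(\beta)$. The central idea is to reinterpret a top-$n$ horizontal arc to $i$, combined with the loop at $n'$, as a single new vertical arc from $i$ to a previously loop-bearing bottom vertex, say $(n-t+1)'$. The reduced diagram then lies in $\mathbb{I}_{n-1}^{n-t+1}\mathfrak{u}_{n-1,t-2}$ and now carries $n-t+1$ vertical arcs, but the choice of which loop position to promote is not intrinsic---it is mixed by the enlarged right $S_{n-t+1}$-action on the new set of vertical endpoints. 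Using the explicit realization $\text{\rm ind}_{S_{n-t}}^{S_{n-t+1}}S^\lambda\cong\bigoplus_{j=1}^{n-t+1}(S^\lambda,j)$ from the introduction with representatives $\tau_j=(j,n-t+1)$, this promotion replaces the $S^\lambda$-tensor factor by $\text{\rm ind}_{S_{n-t}}^{S_{n-t+1}}S^\lambda\cong\bigoplus_{\lambda\sqsubset\beta}S^\beta$ and produces the desired isomorphism.

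The main obstacle is the last step: verifying that the reinterpretation defines a genuine $\mathbb{A}_{n-1}$-module homomorphism $V\to\bigoplus_{\lambda\sqsubset\beta}\mathscr{M}_{\mathbb{A}_{n-1}}(\beta)$ with kernel exactly $W$. When a type-(B) diagram is multiplied on the left by $\mathfrak{b}\in\mathbb{A}_{n-1}$, the path analysis shows the product can degenerate into types (A) or (C)---accumulating loop factors $x^{\ell(\mathfrak{b},\mathfrak{a})}$ and permutations $\sigma_{\mathfrak{b},\mathfrak{a}}$ as in \eqref{E:qq}---and one must check that these degenerations correspond precisely to vanishing in the quotient $V/W$, while the non-degenerate outcomes are governed by the $S_{n-t+1}$-action \eqref{E:ind-act} on the induced module. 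This bookkeeping, together with the compatibility of the reinterpretation with the $S_{n-t}$-tensor relation, is the technical heart of the argument.
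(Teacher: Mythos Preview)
Your plan is correct and follows essentially the same route as the paper: your three types (A), (C), (B) are exactly the paper's subspaces $F_n^2(\lambda)$, $F_n^1(\lambda)$, and $G_n(\lambda)$, and your deletion/reinterpretation maps on $W_A$, $W_C$, and $V/W$ coincide with the paper's $\varphi_2$, $\varphi_1$, $\varphi_3$. The paper carries out the verification you flag as the ``technical heart'' by checking the action of the generators $\sigma\in S_{n-1}$, $\mathfrak{e}_i$, $\mathfrak{u}_j$ case by case, exactly along the lines you sketch.
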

\begin{proof}
{\it Claim 1}. There exists the following short exact sequence of
$\mathbb{A}_{n-1}$-left modules
\begin{equation}
0\longrightarrow \bigoplus_{\alpha\sqsubseteq \lambda}
\mathscr{M}_{\mathbb{A}_{n-1}}
(\alpha)\longrightarrow \text{\rm res}_{n-1}
(\mathscr{M}_{\mathbb{A}_n}(\lambda)).
\end{equation}
Let $F_{n}^1(\lambda)$ denote the
$\mathscr{M}_{\mathbb{A}_n}(\lambda)$-subspace generated by all
tensors $\mathfrak{a}\otimes w$,
where $\mathfrak{a}$ is a $\mathbb{I}_{n}^{n-t}\mathfrak{u}_{n,t}$-diagram
in which all vertical edges are noncrossing and the top-vertex $n$ is
incident to a vertical edge.
Obviously, any tensor $\mathfrak{b}\otimes w\in\mathbb{I}_{n}^{n-t}
\mathfrak{u}_{n,t}\otimes_{S_{n-t}}S^\lambda$ in which $n$ is
incident to a vertical edge, satisfies $\mathfrak{b}\otimes w=
\mathfrak{a}\otimes\sigma w$ for some $\sigma\in S_{n-t}$.
Let $f_1(\mathfrak{a})$ be the diagram derived from $\mathfrak{a}$
by removing $n$ and $(n-t)'$ and by shifting all bottom vertices
$\ell'>(n-t)'$ down by one. $f_1$ induces the mapping
\begin{equation}\label{E:1-}
\begin{split}
\varphi_1\colon F_{n}^1(\lambda) & \longrightarrow
\mathbb{I}_{n-1}^{n-1-t}\mathfrak{u}_{n-1,t}\otimes_{S_{(n-1)-t}}
\text{\rm res}_{S_{n-1-t}}(S^\lambda) \\
\mathfrak{a}\otimes w & \longmapsto f_1(\mathfrak{a})\otimes w.
\qquad\qquad \qquad
\end{split}
\end{equation}
\begin{center}
\begin{picture}(320,15)
\put(70,0){\circle*{5}} \put(84,0){\circle*{5}}
\put(202,0){\circle*{5}}
\put(84,30){\circle*{5}}\put(44,0){\circle*{5}}\put(32,0){\circle*{5}}
\put(240,0){\circle*{5}} 
\put(218,0){\circle*{5}}
\qbezier[100](0,0)(4,0)(8,0)\qbezier[100](16,0)(20,0)(24,0)
\qbezier[50](0,6)(0,8)(0,14)\qbezier[100](0,22)(0,26)(0,30)
\qbezier[20](20,30)(24,30)(28,30) \qbezier[20](52,30)(56,30)(60,30)
\qbezier[20](68,30)(70,30)(72,30)\qbezier[20](32,0)(36,3)(40,6)
\qbezier[20](48,12)(52,15)(56,18)
\qbezier[20](64,24)(68,27)(72,30)\qbezier[20](4,30)(8,30)(12,30)
\qbezier[20](36,30)(40,30)(44,30)\qbezier(32,0)(58,15)(84,30)
\qbezier[20](44,0)(36,4)(44,8)\qbezier[20](44,0)(52,4)(44,8)
\qbezier[100](170,0)(174,0)(178,0)\qbezier[100](186,0)(190,0)(194,0)
\qbezier[50](170,6)(170,8)(170,14)\qbezier[100](170,22)(170,26)(170,30)
\qbezier[20](190,30)(194,30)(198,30)
\qbezier[20](222,30)(226,30)(230,30)
\qbezier[20](238,30)(240,30)(242,30)\qbezier[20](202,0)(206,3)(210,6)
\qbezier[20](218,12)(222,15)(226,18)
\qbezier[20](234,24)(238,27)(242,30)\qbezier[20](174,30)(178,30)(182,30)
\qbezier[20](206,30)(210,30)(214,30)
\qbezier(70,0)(62,4)(70,8) \qbezier(70,0)(78,4)(70,8)
\qbezier(84,0)(76,4)(84,8) \qbezier(84,0)(92,4)(84,8)
 \qbezier(240,0)(232,4)(240,8)
\qbezier(240,0)(248,4)(240,8)
\qbezier[20](202,0)(194,4)(202,8)\qbezier[20](202,0)(210,4)(202,8)
\qbezier(218,0)(210,4)(218,8) \qbezier(218,0)(226,4)(218,8)
\put(108,15){$\otimes\ \ w\ \ \longmapsto
$}\put(142,22){$\varphi_{1}$} \put(278,15){$\otimes\ \ w$}
\end{picture}
\end{center}
We next prove that $\varphi_1$ is bijective.
Indeed, for any $\mathbb{I}_{n-1}^{n-1-t}\mathfrak{u}_{n-1,t}
$-diagram, $\mathfrak{x}$, there exists a unique permutation
$\sigma_0\in S_{n-1-t}$ such that the vertical edges in
$\mathfrak{x}\sigma_0$ are noncrossing. Furthermore
we have $\mathfrak{x}\otimes w=\mathfrak{x}\sigma_0 \otimes \sigma_0^{-1}w$.
Clearly, the tensor $\mathfrak{x}\sigma_0\otimes \sigma_0^{-1}w$ has a unique
$\varphi_1$-preimage, $f_1^{-1}(\mathfrak{x}\sigma_0)\otimes \sigma_0^{-1}w$
where $f_1^{-1}(\mathfrak{x}\sigma_0)$ is obtained by shifting the bottom
vertices $\ell' \ge (n-t)'$ up by one and by adding the vertices $n$ and
$(n-t)'$ together with an vertical edge connecting them.
This proves that $\varphi_1$ is bijective.\\
We next show that $F_n^1(\lambda)$ is, via the natural embedding
$\epsilon_n\colon\mathbb{A}_{n-1}\longrightarrow \mathbb{A}_n$, 
an $\mathbb{A}_{n-1}$-module.
In view of Lemma~\ref{L:rep} it suffices to show
\begin{equation*}
\mathfrak{x}\cdot (\mathfrak{a}\otimes v_i)\in F_n^1(\lambda),
\end{equation*}
where $\mathfrak{x}\in \{\sigma,\mathfrak{e}_{i},\mathfrak{u}_{j}\}$,
$1\le j\le n-1$, $1\le i\le n-2$ and $\sigma\in S_{n-1}$.
Let $\mathfrak{a}$ be a $\mathbb{I}_{n}^{n-t}\mathfrak{u}_{n,t}$-diagram
in which all vertical edges are noncrossing and the top-vertex $n$ is
incident to a vertical edge and let $\sigma\in S_{n-1}$. Then there exist
a unique $\mathbb{I}_{n}^{n-t}\mathfrak{u}_{n,t}$-diagram, $\mathfrak{a}'$,
with noncrossing vertical edges, in which $n$ is connected to $(n-t)'$ and
a permutation $\sigma_0 \in S_{(n-1)-t}$ such that $\sigma \mathfrak{a}
= \mathfrak{a'}\sigma_0$ holds. Consequently,
$$
\sigma\cdot (\mathfrak{a}\otimes w)=\mathfrak{a}'\sigma_0\otimes w=
\mathfrak{a}'\otimes\sigma_0w,
$$
i.e.~$\sigma\cdot (\mathfrak{a}\otimes w)\in F_n^1(\lambda)$.
The cases $\mathfrak{e}_{i}\cdot \mathfrak{a}\otimes v_j$
and $\mathfrak{u}_{i+1}\cdot \mathfrak{a}\otimes v_j$ follow analogously.
We next show that $\varphi_1$ is an isomorphism of $\mathbb{A}_{n-1}$-modules,
that is we prove
$\mathfrak{b}\cdot \varphi_1(\zeta)=\varphi_1(\mathfrak{b}\cdot
\zeta)$. Indeed, for
$\mathfrak{x}\in\{\sigma,\mathfrak{e}_i,\mathfrak{u}_{j}\}$
$$
\mathfrak{x}\cdot (f(\mathfrak{a})\otimes w)=
f(\mathfrak{x}\,\mathfrak{a})\otimes w,
$$
since neither vertex $n$ or its incident bottom vertex $(n-t)'$ are
affected by left multiplication with the elements $\sigma,\mathfrak{e}_i,
\mathfrak{u}_{j}$.\\
Let $F_{n}^2(\lambda)\subset \mathscr{M}_{\mathbb{A}_n}(\lambda)$ be the 
subspace generated by all tensors $\mathfrak{a}\otimes v_i$, where
$\mathfrak{a}\in \mathbb{I}_{n}^{n-t}\mathfrak{u}_{n,t}$ is a diagram having a
loop at vertex $n$. Let
$f_2(\mathfrak{a})\in\mathbb{I}_{n-1}^{n-t}\mathfrak{u}_{n-1,t-1}$ be the
diagram obtained by removing the vertices $n$ and $n'$ together with their
loops. It is straightforward to show that $f_2$ induces the isomorphism of
$\mathbb{A}_{n-1}$-modules
\begin{equation}\label{E:1+}
\begin{split}
\varphi_2\colon F^2_{n}(\lambda) & \longrightarrow
{\mathbb{I}_{n-1}^{n-t}\mathfrak{u}_{n-1,t-1}\otimes_{S_{n-t}}
S^\lambda} \\
\mathfrak{a}\otimes w & \longmapsto  f_2(\mathfrak{a})\otimes w,
\end{split}
\end{equation}
where $\mathbb{I}_{n-1}^{n-t}\mathfrak{u}_{n-1,t-1}\otimes_{S_{n-t}}
S^\lambda\cong \mathscr{M}_{\mathbb{A}_{n-1}}(\lambda)$.
\begin{center}
\begin{picture}(290,20)
\put(70,0){\circle*{5}} \put(84,0){\circle*{5}}
\put(84,30){\circle*{5}}\put(44,0){\circle*{5}}
\put(240,0){\circle*{5}} \put(254,0){\circle{5}}
\put(254,30){\circle{5}}\put(218,0){\circle*{5}}
\qbezier[100](0,0)(4,0)(8,0)\qbezier[100](16,0)(20,0)(24,0)
\qbezier[50](0,6)(0,8)(0,14)\qbezier[100](0,22)(0,26)(0,30)
\qbezier[20](20,30)(24,30)(28,30) \qbezier[20](52,30)(56,30)(60,30)
\qbezier[20](68,30)(70,30)(72,30)\qbezier[20](32,0)(36,3)(40,6)
\qbezier[20](48,12)(52,15)(56,18)
\qbezier[20](64,24)(68,27)(72,30)\qbezier[20](4,30)(8,30)(12,30)
\qbezier[20](36,30)(40,30)(44,30) \qbezier[20](84,30)(76,34)(84,38)
\qbezier[20](84,30)(92,34)(84,38)\qbezier[50](44,0)(36,4)(44,8)
\qbezier[50](44,0)(52,4)(44,8)\qbezier[5](254,30)(246,34)(254,38)
\qbezier[5](254,30)(262,34)(254,38)
\qbezier[100](170,0)(174,0)(178,0)\qbezier[100](186,0)(190,0)(194,0)
\qbezier[50](170,6)(170,8)(170,14)\qbezier[100](170,22)(170,26)(170,30)
\qbezier[20](190,30)(194,30)(198,30)
\qbezier[20](222,30)(226,30)(230,30)
\qbezier[20](238,30)(240,30)(242,30)\qbezier[20](202,0)(206,3)(210,6)
\qbezier[20](218,12)(222,15)(226,18)
\qbezier[20](234,24)(238,27)(242,30)\qbezier[20](174,30)(178,30)(182,30)
\qbezier[20](206,30)(210,30)(214,30)
\qbezier(70,0)(62,4)(70,8) \qbezier(70,0)(78,4)(70,8)
\qbezier(84,0)(76,4)(84,8) \qbezier(84,0)(92,4)(84,8)
\qbezier(240,0)(232,4)(240,8) \qbezier(240,0)(248,4)(240,8)
\qbezier[5](254,0)(246,4)(254,8)
\qbezier[5](254,0)(262,4)(254,8)\qbezier(218,0)(210,4)(218,8)
\qbezier(218,0)(226,4)(218,8)
\put(108,15){$\otimes\ \ w\ \ \longmapsto
$}\put(142,22){$\varphi_{2}$} \put(278,15){$\otimes\ \ w$}
\end{picture}
\end{center}
In view of
$
\text{\rm res}_{S_{n-1-t}} (S^\lambda)
\cong \bigoplus_{\alpha\sqsubset\lambda}
S^\alpha
$
we derive
\begin{eqnarray*}
F_n^1(\lambda)\oplus F_n^2(\lambda) & \cong &
\left[\mathbb{I}_{n-1}^{n-1-t}\mathfrak{u}_{n-1,t}\otimes_{S_{(n-1)-t}}
\text{\rm res}_{S_{n-1-t}}(S^\lambda)\right]
\oplus \left[\mathbb{I}_{n-1}^{n-t}\mathfrak{u}_{n-1,t-1} \otimes_{S_{n-t}}
S^\lambda\right] \\
& \cong & \bigoplus_{\alpha\sqsubset\lambda}
\left[\mathbb{I}_{n-1}^{n-1-t}\mathfrak{u}_{n-1,t}\otimes_{S_{(n-1)-t}}
S^\alpha\right]
\oplus \left[\mathbb{I}_{n-1}^{n-t}\mathfrak{u}_{n-1,t-1} \otimes_{S_{n-t}}
S^\lambda\right],
\end{eqnarray*}
which gives rise to the short exact sequence
$
0\longrightarrow \bigoplus_{\alpha\sqsubseteq\lambda}
\mathscr{M}_{\mathbb{A}_{n-1}}(\alpha)\longrightarrow \text{\rm res}_{n-1}
(\mathscr{M}_{\mathbb{A}_n}(\lambda))
$ and Claim $1$ follows.\\
{\it Claim 2}. Let $F_n(\lambda)=F_n^1(\lambda)\oplus F_n^2(\lambda)$,
then we have an isomorphism of
$\mathbb{A}_{n-1}$-left modules
\begin{equation}
\text{\rm res}_{n-1}\left(\mathscr{M}_{\mathbb{A}_n}(\lambda)/F_n(\lambda)
\right)\cong
\bigoplus_{\lambda\sqsubset\beta}\mathscr{M}_{\mathbb{A}_{n-1}}(\beta).
\end{equation}
Let $G_n(\lambda)$ denote the space generated by all
tensors of the form $\mathfrak{c}\otimes w$, where
$\mathfrak{c}\in\mathbb{I}_{n}^{n-t}\mathfrak{u}_{n,t}$ is a diagram with
noncrossing vertical arcs and a horizontal arc incident to $n$.
Let $f_3(\mathfrak{c})$ be the diagram obtained
from $\mathfrak{c}$ as follows:
one removes $n$ together with its incident horizontal arc and the
bottom-vertex $n'$ together with its incident loop. This leaves a
unique top-vertex, $r$, isolated. Next one removes the loop of the
bottom-vertex $(n-t+1)'$ and connects it to $r$ via a vertical arc.
We next show that $f_3$ induces the bijection
\begin{equation}
\begin{split}
\varphi_3\colon
\text{\rm res}_{n-1}\left(\mathscr{M}_{\mathbb{A}_n}(\lambda))/
F_n(\lambda)\right)  &
\longrightarrow
\mathbb{I}_{n-1}^{n+1-t}\mathfrak{u}_{n-1,t-2}
\otimes_{S_{n+1-t}}\text{\rm ind}_{S_{n-t}}^{S_{n+1-t}}(S^\lambda) \\
\mathfrak{c}\otimes w & \longmapsto
f_3(\mathfrak{c})\otimes (w,n+1-t).\qquad\quad\qquad\qquad
\end{split}
\end{equation}
\begin{center}
\begin{picture}(340,20)
\put(70,0){\circle*{5}} \put(84,0){\circle*{5}}
\put(84,30){\circle*{5}}\put(44,0){\circle*{5}}
\put(240,0){\circle*{5}} \put(254,0){\circle{5}}
\put(254,30){\circle{5}}\put(218,0){\circle*{5}}
\put(30,30){\circle*{5}}\put(200,30){\circle*{5}}
\qbezier[100](0,0)(4,0)(8,0)\qbezier[100](16,0)(20,0)(24,0)
\qbezier[50](0,6)(0,8)(0,14)\qbezier[100](0,22)(0,26)(0,30)
\qbezier[20](20,30)(24,30)(28,30) \qbezier[20](52,30)(56,30)(60,30)
\qbezier[20](68,30)(70,30)(72,30)\qbezier[20](32,0)(36,3)(40,6)
\qbezier[20](48,12)(52,15)(56,18)\qbezier[100](30,30)(57,50)(84,30)
\qbezier[20](64,24)(68,27)(72,30)\qbezier[20](4,30)(8,30)(12,30)
\qbezier[20](36,30)(40,30)(44,30) \qbezier[50](44,0)(36,4)(44,8)
\qbezier[50](44,0)(52,4)(44,8)\qbezier[5](254,30)(246,34)(254,38)
\qbezier[5](254,30)(262,34)(254,38)\qbezier[100](218,0)(209,15)(200,30)
\qbezier[100](170,0)(174,0)(178,0)\qbezier[100](186,0)(190,0)(194,0)
\qbezier[50](170,6)(170,8)(170,14)\qbezier[100](170,22)(170,26)(170,30)
\qbezier[20](190,30)(194,30)(198,30)\qbezier[10](200,30)(227,50)(254,30)
\qbezier[20](222,30)(226,30)(230,30)
\qbezier[20](238,30)(240,30)(242,30)\qbezier[20](202,0)(206,3)(210,6)
\qbezier[20](218,12)(222,15)(226,18)
\qbezier[20](234,24)(238,27)(242,30)\qbezier[20](174,30)(178,30)(182,30)
\qbezier[20](206,30)(210,30)(214,30)
\qbezier(70,0)(62,4)(70,8) \qbezier(70,0)(78,4)(70,8)
\qbezier(84,0)(76,4)(84,8) \qbezier(84,0)(92,4)(84,8)
\qbezier(240,0)(232,4)(240,8) \qbezier(240,0)(248,4)(240,8)
\qbezier[5](254,0)(246,4)(254,8)
\qbezier[5](254,0)(262,4)(254,8)\qbezier[5](218,0)(210,4)(218,8)
\qbezier[5](218,0)(226,4)(218,8)
\put(108,15){$\otimes\ \ w\ \ \longmapsto $}\put(142,22){$\psi_{3}$}
\put(278,15){$\otimes\ \ (w,n+1-t)$}
\end{picture}
\end{center}
Recall that for any $1\le j\le n-t$, $\tau_j=(j,n+1-t)$ and $\tau_{n+1-t}=1$.
Then $S_{n+1-t}=\dot\bigcup\tau_jS_{n-t}$, i.e.~the $\tau_r$ form a set
of representatives of $S_{n+1-t}/S_{n-t}$.
We inspect that there exists some $\sigma\in S_{n-t+1}$ such that
$f_3(\mathfrak{c})\sigma^{-1}=\tilde{\mathfrak{c}}$ has noncrossing
vertical arcs. Then we have $\sigma=\tau_j\sigma_0$, for some
$\sigma_0\in S_{n-t}$.
Therefore, in view of $f_3(\mathfrak{c})\sigma^{-1}=
\tilde{\mathfrak{c}}$, each $f_3(\mathfrak{c})$ gives rise to
some unique $\tau_j$. Using eq.~(\ref{E:ind-act}) we obtain
\begin{eqnarray*}
f_3(\mathfrak{c})\sigma^{-1}\sigma\otimes (w,n+1-t) &= &
\tilde{\mathfrak{c}}\tau_j\sigma_0\otimes (w,n+1-t) \\
 & = & \tilde{\mathfrak{c}}\tau_j\otimes (\sigma_0w,n+1-t) \\
& = & \tilde{\mathfrak{c}}\otimes (\sigma_0w,j).
\end{eqnarray*}
There exist exactly $(n+1-t)$ different $\mathscr{I}_{n}^{n-t}$-diagrams
$\mathfrak{c}_1,\dots, \mathfrak{c}_{n+1-t}$ having noncrossing vertical
arcs in which $n$ is connected to a top-vertex and $\text{\rm bot}(
\mathfrak{c}_j)=\text{\rm bot}(\mathfrak{u}_{n,t})$ with the property
\begin{equation}
f_3(\mathfrak{c}_j)\sigma^{(j)}=\tilde{\mathfrak{c}}
\end{equation}
for some $\sigma^{(j)}\in S_{n-t+1}$.
Since $\text{\rm dim}[\text{\rm ind}_{S_{n-t}}^
{S_{n+1-t}}(S^\lambda)]=(n+1-t)\cdot\text{\rm dim}[S^\lambda]$, we obtain
\begin{equation}
\text{\rm dim}\left[\text{\rm res}_{n-1}\left(
\mathscr{M}_{\mathbb{A}_n}(\lambda)/F_n(\lambda)\right)\right]=
\text{\rm dim}\left[\mathbb{I}_{n-1}^{n+1-t}\mathfrak{u}_{n-1,t-2}
\otimes_{S_{n+1-t}}\text{\rm ind}_{S_{n-t}}^{S_{n+1-t}}(S^\lambda)\right].
\end{equation}
Therefore it suffices to prove that $\varphi_3$ is surjective.
$\mathbb{I}_{n-1}^{n+1-t}\mathfrak{u}_{n-1,t-2}
\otimes_{S_{n+1-t}}\text{\rm ind}_{S_{n-t}}^{S_{n+1-t}}(S^\lambda)$ is
generated by tensors of the form $\mathfrak{d}\otimes
(w,j)$, where $1\le j\le n+1-t$, $\mathfrak{d}\in
\mathscr{I}_{n-1}^{n-t+1}$ with noncrossing vertical arcs,
$\text{\rm bot}(\mathfrak{d})=\text{\rm bot}(\mathfrak{u}_{n-1,t-2})$
and $w\in S^\lambda$. Since for $1\le j\le n+1-t$, we have
$\tau_j \cdot (w,n+1-t) = (w,j)$ we obtain
\begin{equation}
\mathfrak{d}\otimes (w,j)= \mathfrak{d}\otimes
\tau_j \cdot (w,n+1-t)= \mathfrak{d}\tau_j \otimes (w,n+1-t).
\end{equation}
By construction $\mathfrak{d}\tau_j$ is a diagram in which $(n+1-t)'$
connected to a top vertex, which we denote by $r$.
Then there exists some $\sigma_0\in S_{n-t}$ such that in $\mathfrak{d}
\tau_j\sigma_0$ any pair of crossing verticals contains the vertical arc
$((n+1-t)',r)$.
Let $\mathfrak{c}\in \mathscr{I}_{n}^{n-t}$, be derived from
$\mathfrak{d}\tau_j\sigma_0$ by removing $(r,(n+1-t)')$,
adding the vertices $n$ and $n'$, the loops at $(n+1-t)'$
and $n'$, as well as the horizontal arc $(r,n)$.
By construction $\text{\rm bot}(\mathfrak{c})=
\text{\rm bot}(\mathfrak{u}_{n,t})$,
$\mathfrak{c}$ has noncrossing verticals and we have
\begin{equation}
(\mathfrak{d}\tau_j)\sigma_0=f_3(\mathfrak{c}).
\end{equation}
Consequently, using the fact that the tensor product is over
$S_{n+1-t}$
\begin{eqnarray*}
\mathfrak{d} \otimes (w,j) &=&
\mathfrak{d}\tau_j \otimes (w,n+1-t)\\
& = & f_3(\mathfrak{c})\sigma_0^{-1} \otimes (w,n+1-t) \\
& = & f_3(\mathfrak{c})\otimes(\sigma_0^{-1}w,n+1-t),
\end{eqnarray*}
which proves that $\varphi_3$ is surjective.
We proceed by showing that $\varphi_3$ is an
isomorphism of $\mathbb{A}_{n-1}$-modules. Since any $\sigma\in S_{n-1}$
fixes $n$ we inspect
\begin{equation}
\forall\;\sigma\in S_{n-1} ;\quad
\varphi_3(\sigma \cdot \mathfrak{c}\otimes w) =
\sigma\cdot f_3(\mathfrak{c})\otimes (w,n+1-t)=
\sigma\cdot \varphi_3(\mathfrak{c}\otimes w).
\end{equation}
We next consider the action of $\mathfrak{e}_i$, $1\le i\le n-2$.
Suppose $n$ is connected to $r$ in $\mathfrak{c}$ and $r\neq i+1,i$.
Then we immediately obtain
\begin{equation}
\varphi_3(\mathfrak{e}_i \cdot \mathfrak{c}\otimes w) =
\mathfrak{e}_i\cdot f_3(\mathfrak{c})\otimes (w,n+1-t)=
\mathfrak{e}_i\cdot \varphi_3(\mathfrak{c}\otimes w).
\end{equation}
Without loss of generality we may assume $r=i$. We distinguish three
cases: \\
(1) if $i+1$ is incident to a vertical arc, in $\mathfrak{e}_i
\mathfrak{c}$ the top-vertex $n$ is connected to a bottom vertex,
whence $\mathfrak{e}_i \cdot \mathfrak{c}\otimes w\equiv 0$ modulo
$F_n(\lambda)$,
\begin{center}
\setlength{\unitlength}{0.75pt}
\begin{picture}(290,50)
\put(10,0){\circle*{5}}\put(36,20){\circle*{5}}\put(52,20){\circle*{5}}
\put(80,20){\circle*{5}}\put(36,40){\circle*{5}}\put(52,40){\circle*{5}}
\put(80,40){\circle*{5}}\put(36,60){\circle*{5}}\put(52,60){\circle*{5}}
\put(80,60){\circle*{5}}\put(150,20){\circle*{5}}\put(166,40){\circle*{5}}
\put(182,40){\circle*{5}}\put(210,40){\circle*{5}}
\qbezier(0,0)(-10,30)(0,60)\qbezier(90,0)(100,30)(90,60)
\qbezier(140,10)(130,30)(140,50)
\qbezier(230,10)(240,30)(230,50)\qbezier(52,60)(44,75)(36,60)
\qbezier(52,40)(44,55)(36,40)
\qbezier(80,40)(80,50)(80,60)\qbezier[5](36,20)(36,30)(36,40)
\qbezier[5](52,20)(52,30)(52,40)\qbezier[5](80,20)(80,30)(80,40)
\qbezier(10,0)(31,10)(52,20)\qbezier(166,40)(174,55)(182,40)
\qbezier(150,20)(180,30)(210,40)\qbezier(36,20)(58,40)(80,20)
\put(-20,30){$f_3$}\put(100,30){$=$}\put(120,30){$f_3$}
\put(31,70){$i$}\put(42,70){$i+1$}\put(78,70){$n$} \put(164,50){$i$}
\put(178,50){$i+1$}\put(208,50){$n$}
\end{picture}
\begin{picture}(190,-30)
\setlength{\unitlength}{0.75pt}
\put(10,20){\circle*{5}}\put(36,40){\circle*{5}}\put(52,40){\circle*{5}}
\put(80,40){\circle*{5}}\put(150,0){\circle*{5}}\put(166,40){\circle*{5}}
\put(182,40){\circle*{5}}\put(166,60){\circle*{5}}
\put(182,60){\circle*{5}}\put(166,20){\circle*{5}}
\put(182,20){\circle*{5}}\put(194,0){\circle*{5}}
\qbezier(10,20)(31,30)(52,40)\qbezier(166,40)(174,55)(182,40)
\qbezier(0,20)(-6,30)(0,40)\qbezier(166,60)(174,75)(182,60)
\qbezier(90,20)(96,30)(90,40)\qbezier(36,40)(58,60)(80,40)
\qbezier(150,0)(166,10)(182,20)
\qbezier(166,20)(180,10)(194,0)\qbezier[8](166,20)(166,30)(166,40)
\qbezier[8](182,20)(182,30)(182,40)
\put(110,30){$=$} \put(31,50){$i$}\put(42,50){$i+1$}\put(78,50){$n$}
\put(164,70){$i$}
\put(178,70){$i+1$}\put(-40,28){$\mathfrak{e}_i\cdot f_{3}$}
\end{picture}
\end{center}
On the other hand, in $f_3(\mathfrak{c})$, $i+1$ and $i$ are
connected to vertical arcs, whence $\mathfrak{e}_i\cdot
\varphi_3(\mathfrak{c}\otimes w)$ has fewer than $(n+1-t)$ vertical
arcs and is consequently zero in
$\mathbb{I}_{n-1}^{n+1-t}\mathfrak{u}_{n-1,t-2}$. \\
(2) if $i+1$ is
incident to a loop, $n$ is incident to a loop in $\mathfrak{e}_i
\mathfrak{c}$. Clearly we then have $f_3(\mathfrak{e}_i
\mathfrak{c})=\mathfrak{e}_if_3(\mathfrak{c})$ implying
$$
\varphi_3(\mathfrak{e}_i \cdot \mathfrak{c}\otimes w) =
\mathfrak{e}_i\cdot f_3(\mathfrak{c})\otimes (w,n+1-t)=
\mathfrak{e}_i\cdot \varphi_3(\mathfrak{c}\otimes w).
$$

\begin{center}
\setlength{\unitlength}{0.75pt}
\begin{picture}(290,50)
\put(80,0){\circle*{5}}\put(36,20){\circle*{5}}\put(52,20){\circle*{5}}
\put(80,20){\circle*{5}}\put(36,40){\circle*{5}}\put(52,40){\circle*{5}}
\put(80,40){\circle*{5}}\put(36,60){\circle*{5}}\put(52,60){\circle*{5}}
\put(80,60){\circle*{5}}\put(210,20){\circle*{5}}\put(166,40){\circle*{5}}
\put(182,40){\circle*{5}}\put(210,40){\circle*{5}}
\qbezier(0,0)(-10,30)(0,60)\qbezier(90,0)(100,30)(90,60)
\qbezier(140,10)(130,30)(140,50)
\qbezier(230,10)(240,30)(230,50)\qbezier(52,60)(44,75)(36,60)
\qbezier(52,40)(44,55)(36,40)
\qbezier(80,40)(80,50)(80,60)\qbezier[5](36,20)(36,30)(36,40)
\qbezier[5](52,20)(52,30)(52,40)\qbezier[5](80,20)(80,30)(80,40)
\qbezier(166,40)(174,55)(182,40)\qbezier(52,20)(44,24)(52,28)
\qbezier(52,20)(60,24)(52,28)\qbezier(80,0)(72,4)(80,8)
\qbezier(80,0)(88,4)(80,8) \qbezier(36,20)(58,40)(80,20)
\qbezier(210,40)(202,44)(210,48)\qbezier(210,40)(218,44)(210,48)
\qbezier(210,20)(202,24)(210,28)\qbezier(210,20)(218,24)(210,28)
\put(-20,30){$f_3$}\put(100,30){$=$}\put(120,30){$f_3$}
\put(31,70){$i$}\put(42,70){$i+1$}\put(78,70){$n$} \put(164,50){$i$}
\put(178,50){$i+1$}\put(208,50){$n$}
\end{picture}
\setlength{\unitlength}{0.75pt}
\begin{picture}(190,-30)
\put(80,20){\circle*{5}}\put(36,40){\circle*{5}}\put(52,40){\circle*{5}}
\put(80,40){\circle*{5}}\put(166,40){\circle*{5}}
\put(182,40){\circle*{5}}\put(166,60){\circle*{5}}
\put(182,60){\circle*{5}}\put(166,20){\circle*{5}}
\put(182,20){\circle*{5}}\put(194,0){\circle*{5}}
\qbezier(166,40)(174,55)(182,40)
\qbezier(0,20)(-6,30)(0,40)\qbezier(166,60)(174,75)(182,60)
\qbezier(90,20)(96,30)(90,40)\qbezier(36,40)(58,60)(80,40)
\qbezier(166,20)(180,10)(194,0)\qbezier[8](166,20)(166,30)(166,40)
\qbezier[8](182,20)(182,30)(182,40)\qbezier(52,40)(44,44)(52,48)
\qbezier(52,40)(60,44)(52,48)\qbezier(80,20)(72,24)(80,28)
\qbezier(182,20)(174,24)(182,28)\qbezier(182,20)(190,24)(182,28)
\qbezier(80,20)(88,24)(80,28)
\put(110,30){$=$} \put(31,50){$i$}\put(42,50){$i+1$}\put(78,50){$n$}
\put(164,70){$i$}
\put(178,70){$i+1$}\put(-40,28){$\mathfrak{e}_i\cdot f_{3}$}
\end{picture}
\end{center}
(3) if $i+1$ is incident to $j$ via a horizontal arc, $n$ is connected
to $j$ in $\mathfrak{e}_i \mathfrak{c}$. Clearly we then have
$f_3(\mathfrak{e}_i \mathfrak{c})=\mathfrak{e}_if_3(\mathfrak{c})$
implying
$$
\varphi_3(\mathfrak{e}_i \cdot \mathfrak{c}\otimes w) =
\mathfrak{e}_i\cdot f_3(\mathfrak{c})\otimes (w,n+1-t)=
\mathfrak{e}_i\cdot \varphi_3(\mathfrak{c}\otimes w).
$$
\begin{center}
\begin{picture}(220,50)
\setlength{\unitlength}{0.75pt}
\put(80,0){\circle*{5}}\put(36,20){\circle*{5}}\put(52,20){\circle*{5}}
\put(80,20){\circle*{5}}\put(36,40){\circle*{5}}\put(52,40){\circle*{5}}
\put(80,40){\circle*{5}}\put(36,60){\circle*{5}}\put(52,60){\circle*{5}}
\put(80,60){\circle*{5}}\put(210,20){\circle*{5}}\put(166,40){\circle*{5}}
\put(182,40){\circle*{5}}\put(210,40){\circle*{5}}\put(16,20){\circle*{5}}
\put(146,40){\circle*{5}}\put(16,40){\circle*{5}}\put(16,60){\circle*{5}}
\qbezier(0,0)(-10,30)(0,60)\qbezier(90,0)(100,30)(90,60)
\qbezier[8](16,40)(16,50)(16,60)
\qbezier(140,10)(130,30)(140,50)\qbezier(146,40)(178,60)(210,40)
\qbezier(230,10)(240,30)(230,50)\qbezier(52,60)(44,75)(36,60)
\qbezier(52,40)(44,55)(36,40)\qbezier(16,20)(34,35)(52,20)
\qbezier(80,40)(80,50)(80,60)\qbezier[5](36,20)(36,30)(36,40)
\qbezier[5](52,20)(52,30)(52,40)\qbezier[5](80,20)(80,30)(80,40)
\qbezier(166,40)(174,55)(182,40)\qbezier(80,0)(72,4)(80,8)
\qbezier(80,0)(88,4)(80,8) \qbezier(36,20)(58,40)(80,20)
\qbezier(210,20)(202,24)(210,28)\qbezier(210,20)(218,24)(210,28)
\put(-20,30){$f_3$}\put(105,30){$=$}\put(120,30){$f_3$}
\put(31,70){$i$}\put(42,70){$i+1$}\put(78,70){$n$} \put(164,70){$i$}
\put(178,70){$i+1$}\put(208,70){$n$}\put(16,70){$j$}
\end{picture}
\begin{picture}(160,-30)
\setlength{\unitlength}{0.75pt}
\put(80,20){\circle*{5}}\put(36,40){\circle*{5}}\put(52,40){\circle*{5}}
\put(80,40){\circle*{5}}\put(166,40){\circle*{5}}
\put(182,40){\circle*{5}}\put(166,60){\circle*{5}}
\put(182,60){\circle*{5}}\put(166,20){\circle*{5}}
\put(182,20){\circle*{5}}\put(194,0){\circle*{5}}\put(16,40){\circle*{5}}
\put(140,20){\circle*{5}}\put(140,40){\circle*{5}}\put(140,60){\circle*{5}}
\qbezier(166,40)(174,55)(182,40)\qbezier(140,40)(140,50)(140,60)
\qbezier(0,20)(-6,30)(0,40)\qbezier(166,60)(174,75)(182,60)
\qbezier(90,20)(96,30)(90,40)\qbezier(36,40)(58,60)(80,40)
\qbezier(166,20)(180,10)(194,0)\qbezier[8](140,20)(140,30)(140,40)
\qbezier[8](166,20)(166,30)(166,40)
\qbezier[8](182,20)(182,30)(182,40)\qbezier(80,20)(72,24)(80,28)
\qbezier(80,20)(88,24)(80,28)\qbezier(16,40)(34,55)(52,40)
\qbezier(140,20)(161,35)(182,20)
\put(110,30){$=$} \put(31,70){$i$}\put(42,70){$i+1$}\put(78,70){$n$}
\put(164,70){$i$}\put(140,70){$j$}
\put(178,70){$i+1$}\put(-40,28){$\mathfrak{e}_i\cdot f_{3}$}
\end{picture}
\end{center}

Finally we consider the action of $\mathfrak{u}_i$, $1\le i\le n-1$.
Suppose first $r\neq i$. By definition of $f_3$, a vertex $i\neq r$
is in $\mathfrak{c}$ incident to a vertical arc if and only if this
holds for $f_3(\mathfrak{c})$.
In this case we have $\mathfrak{u}_i\mathfrak{c}\equiv 0\mod F_n(\lambda)$ and
$\mathfrak{u}_if_3(\mathfrak{c})\equiv 0$ in $\mathbb{I}_{n-1}^{n+1-t}
\mathfrak{u}_{n-1,t-2}$.
If $i$ is incident to a loop we have $\mathfrak{u}_i\mathfrak{c}=x
\mathfrak{c}$ and $\mathfrak{u}_if_3(\mathfrak{c})=xf_3(\mathfrak{c})$,
i.e.~$\varphi_3(\mathfrak{u}_i \cdot \mathfrak{c}\otimes w) =
\mathfrak{u}_i \cdot\varphi_3(\mathfrak{c}\otimes w)$. Finally, if
$i$ is incident to a horizontal arc we have
$f_3(\mathfrak{u}_i\mathfrak{c})=\mathfrak{u}_if_3(\mathfrak{c})$.
Second let $r=i$. On the one hand we obtain $\mathfrak{u}_i\mathfrak{c}
\equiv 0$ modulo $F_n(\lambda)$, since the $i'$-loop of $\mathfrak{u}_i$
traces back to the top vertex $n$ of $\mathfrak{u}_i\mathfrak{c}$.
On the other hand,
in $\mathfrak{u}_if_3(\mathfrak{c})$ the $i'$-loop of $\mathfrak{u}_i$
traces back to the bottom vertex $(n+1-t)'$. Consequently, $\mathfrak{u}_i
f_3(\mathfrak{c})$ has fewer than $(n+1-t)$ vertical arcs and is
zero in $\mathbb{I}_{n-1}^{n+1-t}\mathfrak{u}_{n-1,t-2}$.
Therefore $\varphi_3$ is an isomorphism of $\mathbb{A}_{n-1}$-left modules.
In view of $\text{\rm ind}_{S_{n-t}}^{S_{n+1-t}}(S^\lambda)\cong
\bigoplus_{\lambda\sqsubset\beta}S^\beta$ we derive
\begin{eqnarray*}
\text{\rm res}_{n-1}\left(\mathscr{M}_{\mathbb{A}_n}(\lambda))/
F_n(\lambda)\right) &
\cong & \mathbb{I}_{n-1}^{n+1-t}\mathfrak{u}_{n-1,t-2}
\otimes_{S_{n+1-t}}\text{\rm ind}_{S_{n-t}}^{S_{n+1-t}}(S^\lambda)\\
&\cong & \bigoplus_{\lambda\sqsubset\beta}\left(
\mathbb{I}_{n-1}^{n+1-t}\mathfrak{u}_{n-1,t-2}
\otimes_{S_{n+1-t}}S^\beta\right) \\
& \cong & \bigoplus_{\lambda\sqsubset\beta}\mathscr{M}_{\mathbb{A}_{n-1}}
(\beta)
\end{eqnarray*}
and the proof of the theorem is complete.
\end{proof}

For $\mathbb{L}_n$, there exists no nontrivial space $G_n(\lambda)$ and
Theorem~\ref{T:realdeal} accordingly implies

\begin{corollary}\label{C:realdeal}
Let $n,t\in\mathbb{N}$ and $\lambda\vdash (n-t)$ where $1\le  t
\le n$. Then we have the isomorphism of
$\mathbb{L}_{n-1}$-modules
\begin{eqnarray}\label{E:res-L}
\bigoplus_{\alpha\sqsubseteq \lambda }
\mathscr{M}_{\mathbb{L}_{n-1}}(\alpha)
\cong \text{\rm res}_{n-1}(\mathscr{M}_{\mathbb{L}_n}(\lambda)).
\end{eqnarray}
\end{corollary}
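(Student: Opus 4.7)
The plan is to observe that the proof of Theorem~\ref{T:realdeal} specializes to $\mathbb{L}_n$ in a very clean way: the third space $G_n(\lambda)$, which consisted of tensors $\mathfrak{c}\otimes w$ with $\mathfrak{c}$ having a horizontal arc incident to vertex $n$, is identically zero, since by definition $\mathscr{L}_n$-diagrams have no horizontal arcs. Consequently, the portion of the short exact sequence coming from $\mathbb{I}_{n-1}^{n+1-t}\mathfrak{u}_{n-1,t-2}\otimes_{S_{n+1-t}}\text{\rm ind}_{S_{n-t}}^{S_{n+1-t}}(S^\lambda)$ disappears entirely and the exact sequence collapses to an isomorphism.

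First I would verify that any $\mathscr{L}_n$-diagram in $\mathbb{I}_n^{n-t}\mathfrak{u}_{n,t}$ has its top vertex $n$ incident either to a vertical arc or to a loop, which yields the decomposition
\begin{equation*}
\mathscr{M}_{\mathbb{L}_n}(\lambda) \;=\; F_n^1(\lambda)\oplus F_n^2(\lambda),
\end{equation*}
with $F_n^1(\lambda)$ and $F_n^2(\lambda)$ defined exactly as in the proof of Theorem~\ref{T:realdeal}. Next I would carry over the two isomorphisms $\varphi_1$ and $\varphi_2$ verbatim, checking that they are well-defined in the $\mathbb{L}$-setting. The maps $f_1$ (remove the top vertex $n$ and the bottom vertex $(n-t)'$ and shift) and $f_2$ (remove $n,n'$ together with their loops) preserve the property of having no horizontal arcs, so their images lie in $\mathbb{L}_{n-1}$-modules. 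Moreover, the generators $\{S_n,\mathfrak{u}_n\}$ of $\mathbb{L}_n$ provided by Lemma~\ref{L:rep} act by left multiplication without ever creating a horizontal arc, hence $F_n^1(\lambda)$ and $F_n^2(\lambda)$ are $\mathbb{L}_{n-1}$-submodules and $\varphi_1,\varphi_2$ are morphisms of $\mathbb{L}_{n-1}$-modules.

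Finally, applying the branching rule $\text{\rm res}_{S_{n-1-t}}(S^\lambda)\cong \bigoplus_{\alpha\sqsubset\lambda}S^\alpha$ to the image of $\varphi_1$ and combining with the $\alpha=\lambda$ contribution coming from $\varphi_2$, I obtain
\begin{equation*}
\text{\rm res}_{n-1}(\mathscr{M}_{\mathbb{L}_n}(\lambda))
\;\cong\; \bigoplus_{\alpha\sqsubset\lambda}\mathscr{M}_{\mathbb{L}_{n-1}}(\alpha)
\;\oplus\; \mathscr{M}_{\mathbb{L}_{n-1}}(\lambda)
\;=\; \bigoplus_{\alpha\sqsubseteq\lambda}\mathscr{M}_{\mathbb{L}_{n-1}}(\alpha).
\end{equation*}
There is no genuine obstacle; the only point requiring care is confirming that $\varphi_1$ and $\varphi_2$ from the $\mathbb{A}_n$ argument restrict properly to $\mathbb{L}_n$, which amounts to checking that no intermediate construction introduces horizontal arcs. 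Since all potential contributions of such arcs were isolated in $G_n(\lambda)$, and $G_n(\lambda)=0$ here, the restriction in the $\mathbb{L}$-case is cleaner than in the $\mathbb{A}$-case and yields a direct sum rather than merely a short exact sequence.
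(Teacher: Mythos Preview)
Your proposal is correct and follows exactly the paper's approach: the paper simply notes that for $\mathbb{L}_n$ the space $G_n(\lambda)$ is trivial (since $\mathscr{L}_n$-diagrams have no horizontal arcs), so the exact sequence of Theorem~\ref{T:realdeal} collapses to the claimed isomorphism. You have actually supplied more detail than the paper, which dispatches the corollary in a single sentence.
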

We proceed by studying induction in $\mathbb{A}_n$. Let us begin by
remarking that the arguments of the following proof can easily be
put into context with the localization and globalization functors
\cite{Martin,Doran}. Since the latter are compatible with the
quasi-hereditary structure of $\mathbb{A}_n$, in case of $x\neq 0$
one can obtain a more structural point of view.
\begin{theorem}\label{T:realdeal2}
Let $n,t\in\mathbb{N}$ and $\lambda\vdash (n-t)$ where $1\le  t \le n$. 
Then we have 
\begin{eqnarray}\label{E:indres1}
\text{\rm ind}_{n+1}(\mathscr{M}_{\mathbb{A}_n}(\lambda)) \cong
\text{\rm res}_{n+1}(\mathscr{M}_{\mathbb{A}_{n+2}}(\lambda)).
\end{eqnarray}
Furthermore there exists the exact sequence of $\mathbb{A}_{n+1}$-modules
\begin{eqnarray}
\label{E:induct}
\quad 0\longrightarrow \bigoplus_{\alpha\sqsubseteq \lambda}
\mathscr{M}_{\mathbb{A}_{n+1}}(\alpha)\longrightarrow
\text{\rm ind}_{n+1}(\mathscr{M}_{\mathbb{A}_n}(\lambda))
\longrightarrow 
\bigoplus_{\lambda\sqsubset \beta}\mathscr{M}_{\mathbb{A}_{n+1}}
(\beta)
\longrightarrow 0.
\end{eqnarray}
\end{theorem}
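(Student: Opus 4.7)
The proof consists of two parts. Once the isomorphism \eqref{E:indres1} is established, the exact sequence \eqref{E:induct} follows formally from Theorem~\ref{T:realdeal}: applying that theorem to the cell module $\mathscr{M}_{\mathbb{A}_{n+2}}(\lambda)$, with its parameters $(n,t)$ replaced by $(n+2,t+2)$---noting $\lambda\vdash(n+2)-(t+2)=n-t$ and $1\le t+2\le n+2$ automatically---one obtains
\begin{equation*}
0\longrightarrow\bigoplus_{\alpha\sqsubseteq\lambda}\mathscr{M}_{\mathbb{A}_{n+1}}(\alpha)\longrightarrow\text{\rm res}_{n+1}\mathscr{M}_{\mathbb{A}_{n+2}}(\lambda)\longrightarrow\bigoplus_{\lambda\sqsubset\beta}\mathscr{M}_{\mathbb{A}_{n+1}}(\beta)\longrightarrow 0,
\end{equation*}
and substituting \eqref{E:indres1} in the middle term yields \eqref{E:induct}.

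To prove \eqref{E:indres1}, the plan is to invoke the localization--globalization formalism of Martin~\cite{Martin} and Doran et al.~\cite{Doran}, adapted to $\mathbb{A}_{n+2}$ via the idempotent $\hat e:=x^{-2}\mathfrak{u}_{n+1}\mathfrak{u}_{n+2}\in\mathbb{A}_{n+2}$. Since $x\neq 0$, the relations $\mathfrak{u}_{n+1}^2=x\mathfrak{u}_{n+1}$ and $\mathfrak{u}_{n+1}\mathfrak{u}_{n+2}=\mathfrak{u}_{n+2}\mathfrak{u}_{n+1}$ (both by direct diagram composition) give $\hat e^2=\hat e$, and a further diagrammatic computation yields the algebra isomorphism $\hat e\mathbb{A}_{n+2}\hat e\cong\mathbb{A}_n$ obtained by collapsing the four loop-decorated vertices $n+1,n+2,(n+1)',(n+2)'$. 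The associated globalization functor $G_n(M):=\mathbb{A}_{n+2}\hat e\otimes_{\mathbb{A}_n}M$ then maps $\mathscr{M}_{\mathbb{A}_n}(\lambda)$ to $\mathscr{M}_{\mathbb{A}_{n+2}}(\lambda)$, as is verified by matching generators on both sides. Granting the key bimodule identification
\begin{equation*}
\text{\rm res}_{n+1}(\mathbb{A}_{n+2}\hat e)\cong\mathbb{A}_{n+1}\quad\text{as }(\mathbb{A}_{n+1},\mathbb{A}_n)\text{-bimodules},
\end{equation*}
the desired isomorphism follows from the chain
\begin{equation*}
\text{\rm res}_{n+1}\mathscr{M}_{\mathbb{A}_{n+2}}(\lambda)\cong\text{\rm res}_{n+1}G_n(\mathscr{M}_{\mathbb{A}_n}(\lambda))\cong\text{\rm res}_{n+1}(\mathbb{A}_{n+2}\hat e)\otimes_{\mathbb{A}_n}\mathscr{M}_{\mathbb{A}_n}(\lambda)\cong\text{\rm ind}_{n+1}\mathscr{M}_{\mathbb{A}_n}(\lambda).
\end{equation*}

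The hardest step is proving this bimodule identification. A direct dimension count matches both sides, since both parametrize $\mathscr{A}_{n+2}$-diagrams with forced loops at $(n+1)'$ and $(n+2)'$, which are in natural bijection with $\mathscr{A}_{n+1}$-diagrams. An explicit $F$-linear bijection can be set up by tracing the connected component through the top vertex $n+2$ of a diagram $\mathfrak{d}\in\mathbb{A}_{n+2}\hat e$ and absorbing it into an $\mathscr{A}_{n+1}$-diagram. Verifying that this bijection is left-$\mathbb{A}_{n+1}$-equivariant (under the embedding $\epsilon_{n+2}$) and right-$\mathbb{A}_n$-equivariant (under $\hat e\mathbb{A}_{n+2}\hat e\cong\mathbb{A}_n$) reduces, via Lemma~\ref{L:rep}, to checks on the finite generator set $\{\mathfrak{g}_n,\mathfrak{e}_n,\mathfrak{u}_{n+1}\}$ on the left, together with the analogous generators of $\mathbb{A}_n$ on the right; the resulting case-analysis is routine but intricate, requiring careful bookkeeping of the scalar factors $x^{\ell(\cdot,\cdot)}$ arising in diagram composition.
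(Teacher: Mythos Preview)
Your overall strategy is correct and matches the paper's: deduce \eqref{E:induct} from \eqref{E:indres1} together with Theorem~\ref{T:realdeal}, and prove \eqref{E:indres1} via a globalization functor realized by an idempotent $e$ in $\mathbb{A}_{n+2}$ satisfying $e\mathbb{A}_{n+2}e\cong\mathbb{A}_n$, together with a bimodule identification $\text{\rm res}_{n+1}(\mathbb{A}_{n+2}e)\cong\mathbb{A}_{n+1}$. The paper itself remarks that its argument can be phrased this way.

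The one genuine difference is your choice of idempotent. You take $\hat e=x^{-2}\mathfrak{u}_{n+1}\mathfrak{u}_{n+2}$, the ``loop'' idempotent, whereas the paper works with $\mathfrak{e}_{n+1}$ (more precisely $x^{-1}\mathfrak{e}_{n+1}$), the Brauer-type idempotent with a horizontal arc at $(n+1)',(n+2)'$. Both choices satisfy $e\mathbb{A}_{n+2}e\cong\mathbb{A}_n$ and both support the bimodule isomorphism via the same ``move top vertex $n+2$ to bottom position $(n+1)'$'' bijection---the paper calls this map $f_4$. The paper then proves $\mathbb{A}_{n+2}\mathfrak{e}_{n+1}\otimes_{\mathbb{A}_n}\mathbb{I}_n^{n-t}\mathfrak{u}_{n,t}\cong\mathbb{I}_{n+2}^{n-t}\mathfrak{u}_{n+2,t+2}$ by an explicit normal-form computation for tensors (rewriting $\mathfrak{a}\mathfrak{e}_{n+1}\otimes\mathfrak{b}\mathfrak{u}_{n,t}$ as $\mathfrak{a}'\mathfrak{u}_{n,t}\mathfrak{e}_{n+1}\otimes\mathfrak{u}_{n,t}$), rather than appealing to the abstract globalization formalism. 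Your choice of $\hat e$ is arguably more natural for $\mathbb{A}_n$ (it lies in the subalgebra $\mathbb{L}_{n+2}$ and directly witnesses the hereditary chain indexed by \emph{all} integers emphasized in the introduction), while the paper's $\mathfrak{e}_{n+1}$ keeps the argument closer to the Brauer-algebra template of \cite{Doran}. Either idempotent works; the remaining verifications you flag are indeed routine.
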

\begin{proof}
We first prove eq.~(\ref{E:indres1}). Suppose we have
$\mathfrak{a}\in \mathscr{A}_{n+2}$, with the property that its
bottom vertices $(n+1)'$ and $(n+2)'$ are connected by a horizontal
arc. Let $f_4(\mathfrak{a})$ be the diagram obtained from
$\mathfrak{a}$ by removing its bottom vertices $(n+1)'$, $(n+2)'$
together with their horizontal arc and moving its top-vertex $(n+2)$
to the bottom at position $(n+1)'$. It is straightforward to prove
that for any $\mathscr{M}_{\mathbb{A}_n}(\lambda)$ the mapping
\begin{equation}\label{E:4}
\begin{split}
\varphi_4\colon
\text{\rm res}_{n+1}(\mathbb{A}_{n+2}\mathfrak{e}_{n+1}
\otimes_{\mathbb{A}_{n}}\mathscr{M}_{\mathbb{A}_n}(\lambda))
& \longrightarrow
 \mathbb{A}_{n+1}\otimes_{\mathbb{A}_n}\mathscr{M}_{\mathbb{A}_n}(\lambda)\\
\mathfrak{a}\otimes w & \longmapsto
f_4(\mathfrak{a})\otimes w,
\end{split}
\end{equation}
is an isomorphism of $\mathbb{A}_{n+1}$-modules. We proceed by showing
\begin{equation}\label{E:interpret}
\mathbb{A}_{n+2}\mathfrak{e}_{n+1}\otimes_{\mathbb{A}_{n}}
\mathscr{M}_{\mathbb{A}_n}(\lambda)\cong \mathscr{M}_{\mathbb{A}_{n+2}}
(\lambda).
\end{equation}
The key to eq.~(\ref{E:interpret}) is to prove that
\begin{equation}\label{E:key}
\mathbb{A}_{n+2}\mathfrak{e}_{n+1}
\otimes_{\mathbb{A}_n}\mathbb{I}_n^{n-t}\mathfrak{u}_{n,t}
\cong
\mathbb{I}_{n+2}^{n-t}\mathfrak{u}_{n+2,t+2}
\end{equation}
is an isomorphism of $\mathbb{A}_{n+2}$-left modules.
For this purpose we consider a tensor $\mathfrak{a}\mathfrak{e}_{n+1}
\otimes \mathfrak{b}\mathfrak{u}_{n,t}$, where
$\mathfrak{a}\mathfrak{e}_{n+1}\in \mathbb{A}_{n+2}\mathfrak{e}_{n+1}$
and $\mathfrak{b}\in\mathscr{A}_{n}^{n-t}$. Let
$\mathfrak{x}\in\mathscr{A}_n^{n-t}$ be obtained from $\mathfrak{b}$ as
follows: we set
$\text{\rm bot}(\mathfrak{x})=\text{\rm top}(\mathfrak{b})$,
$\text{\rm top}(\mathfrak{x})=\text{\rm top}(\mathfrak{u}_{n,t})$
and choose the
vertical $\mathfrak{x}$-arcs and $m\in\mathbb{Z}$ such that
\begin{equation}\label{E:JJ}
x^m\,\mathfrak{x}\,\mathfrak{b}\,\mathfrak{u}_{n,t}=\mathfrak{u}_{n,t}.
\end{equation}
Since the product $\mathfrak{x}^*\mathfrak{x}$ generates exactly $t$
inner components, we obtain using eq.~(\ref{E:JJ})
\begin{equation*}
x^{-t+m}\,\mathfrak{x}^*\mathfrak{x}\,
\mathfrak{b}\,\mathfrak{u}_{n,t}=\mathfrak{b}\,\mathfrak{u}_{n,t}.
\end{equation*}
Using $\text{\rm bot}(\mathfrak{r}^*)=
\text{\rm top}(\mathfrak{x})=\text{\rm top}(\mathfrak{u}_{n,t})$,
we compute
\begin{eqnarray*}
\mathfrak{a}\mathfrak{e}_{n+1}\otimes \mathfrak{b}\,\mathfrak{u}_{n,t} &= &
\mathfrak{a}\mathfrak{e}_{n+1}\otimes
x^{-t+m}\,\mathfrak{x}^*\mathfrak{x}\,
\mathfrak{b}\,\mathfrak{u}_{n,t}\\
 & = &
\mathfrak{a}\mathfrak{e}_{n+1}x^{-t}\,\mathfrak{x}^*\otimes x^m\,\mathfrak{x}\,
\mathfrak{b}\,\mathfrak{u}_{n,t} \\
 & = &
\mathfrak{a}\mathfrak{e}_{n+1}x^{-t}\,\mathfrak{x}^*\otimes \mathfrak{u}_{n,t}
\\
 & = &
x^{-t}\,\mathfrak{a}\,\mathfrak{x}^*\mathfrak{e}_{n+1}\otimes
\mathfrak{u}_{n,t}
\\
 & = & \mathfrak{a}'\mathfrak{u}_{n,t}\mathfrak{e}_{n+1}\otimes
\mathfrak{u}_{n,t}.
\end{eqnarray*}
Employing the  just derived normal form for tensors, we are now in position to
make the isomorphism of $\mathbb{A}_{n+2}$-left modules of eq.~(\ref{E:key})
explicit
\begin{equation*}
\begin{split}
\varphi_5\colon \mathbb{A}_{n+2}\mathfrak{e}_{n+1}\otimes_{\mathbb{A}_n}
\mathbb{I}_n^{n-t}\mathfrak{u}_{n,t}
& \longrightarrow
\mathbb{I}_{n+2}^{n-t}\mathfrak{u}_{n+2,t+2} \\
 \mathfrak{a}'\mathfrak{u}_{n,t}\mathfrak{e}_{n+1}\otimes
\mathfrak{u}_{n,t} & \longmapsto \mathfrak{a}'\mathfrak{u}_{n+2,t+2}.
\end{split}
\end{equation*}
Standard tensor identities imply
\begin{eqnarray*}
\mathbb{A}_{n+2}\mathfrak{e}_{n+1}\otimes_{\mathbb{A}_{n}}
\mathscr{M}_{\mathbb{A}_n}(\lambda) &\cong &
(\mathbb{A}_{n+2}\mathfrak{e}_{n+1}\otimes_{\mathbb{A}_n}
\mathbb{I}_n^{n-t}\mathfrak{u}_{n,t})\otimes_{S_{n-t}} S^\lambda\\
&\cong &
\mathbb{I}_{n+2}^{n-t}\mathfrak{u}_{n+2,t+2}\otimes_{S_{n-t}} S^\lambda\\
& \cong & \mathscr{M}_{\mathbb{A}_{n+2}}(\lambda).
\end{eqnarray*}
Now Claim $3$ follows immediately
\begin{eqnarray*}
\text{\rm ind}_{n+1}(\mathscr{M}_{\mathbb{A}_n}(\lambda)) & = &
\mathbb{A}_{n+1}\otimes_{\mathbb{A}_n} \mathscr{M}_{\mathbb{A}_n}(\lambda) \\
&\cong & \text{\rm res}_{n+1}(\mathbb{A}_{n+2}\mathfrak{e}_{n+1}
\otimes_{\mathbb{A}_{n}}\mathscr{M}_{\mathbb{A}_n}(\lambda))\\
&\cong & \text{\rm res}_{n+1}(\mathscr{M}_{\mathbb{A}_{n+2}}(\lambda)).
\end{eqnarray*}
Accordingly, the exact sequence of eq.~(\ref{E:induct}) is immediately
implied by Theorem~\ref{T:realdeal} and the proof of the theorem is complete.
\end{proof}

\begin{corollary}\label{C:realdeal2}
Let $n,t\in\mathbb{N}$ and $\lambda\vdash (n-t)$ where $1\le  t \le n$. 
Then we have 
\begin{equation}
\text{\rm ind}_{n+1}(\mathscr{M}_{\mathbb{L}_n}(\lambda)) \cong
\text{\rm res}_{n+1}(\mathscr{M}_{\mathbb{L}_{n+2}}(\lambda))
\quad \text{\it and}\quad
\bigoplus_{\alpha\sqsubseteq \lambda}
\mathscr{M}_{\mathbb{L}_{n+1}}(\alpha)\cong
\text{\rm ind}_{n+1}(\mathscr{M}_{\mathbb{L}_n}(\lambda)).
\end{equation}
\end{corollary}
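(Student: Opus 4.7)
The plan is to mirror the proof of Theorem~\ref{T:realdeal2} in the $\mathbb{L}$-setting. Since Corollary~\ref{C:realdeal}, applied with $n$ replaced by $n+2$, already identifies $\text{\rm res}_{n+1}(\mathscr{M}_{\mathbb{L}_{n+2}}(\lambda))$ with $\bigoplus_{\alpha\sqsubseteq\lambda}\mathscr{M}_{\mathbb{L}_{n+1}}(\alpha)$, the second isomorphism of the corollary will be a formal consequence of the first. The entire work is therefore to establish
\begin{equation*}
\text{\rm ind}_{n+1}(\mathscr{M}_{\mathbb{L}_n}(\lambda))\;\cong\;\text{\rm res}_{n+1}(\mathscr{M}_{\mathbb{L}_{n+2}}(\lambda)).
\end{equation*}

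The key substitution is that $\mathfrak{e}_{n+1}\notin\mathbb{L}_{n+2}$, so I would replace it throughout by $\mathfrak{u}_{n+2,2}=\mathfrak{u}_{n+1}\mathfrak{u}_{n+2}$, which has straight verticals at positions $1,\ldots,n$ and loops at $n{+}1,n{+}2,(n{+}1)',(n{+}2)'$. First I would establish the $\mathbb{L}_{n+2}$-module isomorphism
\begin{equation*}
\mathbb{L}_{n+2}\mathfrak{u}_{n+2,2}\otimes_{\mathbb{L}_n}\mathscr{M}_{\mathbb{L}_n}(\lambda)\;\cong\;\mathscr{M}_{\mathbb{L}_{n+2}}(\lambda)
\end{equation*}
by running the normal-form argument from Theorem~\ref{T:realdeal2} verbatim: given a generator $\mathfrak{a}\mathfrak{u}_{n+2,2}\otimes\mathfrak{b}\mathfrak{u}_{n,t}$, choose $\mathfrak{x}\in\mathscr{L}_n^{n-t}$ with $x^m\mathfrak{x}\mathfrak{b}\mathfrak{u}_{n,t}=\mathfrak{u}_{n,t}$, apply the identity $x^{-t}\mathfrak{x}^*\mathfrak{x}\cdot\mathfrak{b}\mathfrak{u}_{n,t}=\mathfrak{b}\mathfrak{u}_{n,t}$, and pull $\mathfrak{x}^*$ across the tensor. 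This is legitimate because $\mathfrak{x}^*$ is supported on positions $1,\ldots,n$ and hence commutes with $\mathfrak{u}_{n+2,2}$ in $\mathbb{L}_{n+2}$. The resulting normal form $\mathfrak{a}'\mathfrak{u}_{n,t}\mathfrak{u}_{n+2,2}\otimes\mathfrak{u}_{n,t}$, via the assignment $\mathfrak{a}'\mathfrak{u}_{n,t}\mathfrak{u}_{n+2,2}\otimes\mathfrak{u}_{n,t}\mapsto\mathfrak{a}'\mathfrak{u}_{n+2,t+2}$, yields the isomorphism of $\mathbb{L}_{n+2}$-left modules $\mathbb{L}_{n+2}\mathfrak{u}_{n+2,2}\otimes_{\mathbb{L}_n}\mathbb{I}_n^{n-t}\mathfrak{u}_{n,t}\cong\mathbb{I}_{n+2}^{n-t}\mathfrak{u}_{n+2,t+2}$; tensoring with $S^\lambda$ over $S_{n-t}$ completes this step.

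Next I would define $\varphi_4\colon\text{\rm res}_{n+1}(\mathbb{L}_{n+2}\mathfrak{u}_{n+2,2}\otimes_{\mathbb{L}_n}\mathscr{M}_{\mathbb{L}_n}(\lambda))\longrightarrow\mathbb{L}_{n+1}\otimes_{\mathbb{L}_n}\mathscr{M}_{\mathbb{L}_n}(\lambda)=\text{\rm ind}_{n+1}(\mathscr{M}_{\mathbb{L}_n}(\lambda))$ by $\mathfrak{a}\mathfrak{u}_{n+2,2}\otimes w\mapsto f_4(\mathfrak{a}\mathfrak{u}_{n+2,2})\otimes w$, where $f_4$ removes the bottom vertices $(n{+}1)'$ and $(n{+}2)'$ together with their loops and relabels the top vertex $n{+}2$ as the new bottom vertex $(n{+}1)'$. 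The verification that $\varphi_4$ is a well-defined $\mathbb{L}_{n+1}$-module isomorphism transcribes the corresponding verification in Theorem~\ref{T:realdeal2}; chaining the two isomorphisms produces the first assertion, and the second follows via Corollary~\ref{C:realdeal}.

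The main obstacle is the bookkeeping of scalar powers of $x$ inside the normal-form reduction and the checking of compatibility of $\varphi_4$ with the generators $\sigma\in S_{n+1}$ and $\mathfrak{u}_j$ for $1\le j\le n+1$ (as given by Lemma~\ref{L:rep} for $\mathbb{L}_n$). Because $\mathfrak{u}_{n+2,2}$ is a product of commuting loop-idempotents and $\mathbb{L}$ admits no horizontal arcs, every inner component produced in the relevant products is a loop rather than a cycle, and the interaction of $\mathfrak{u}_j$ with $f_4$ splits into only two cases (loop at $j$ or vertical through $j$). This should in fact make the $\mathbb{L}$-argument strictly simpler than its $\mathbb{A}$-counterpart, with no residual horizontal-arc case to handle.
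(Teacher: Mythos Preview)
The paper gives no separate proof; the corollary is stated as if the argument of Theorem~\ref{T:realdeal2} carries over verbatim, and your proposal is exactly that intended adaptation. The normal-form step establishing $\mathbb{L}_{n+2}\mathfrak{u}_{n+2,2}\otimes_{\mathbb{L}_n}\mathscr{M}_{\mathbb{L}_n}(\lambda)\cong\mathscr{M}_{\mathbb{L}_{n+2}}(\lambda)$ does go through as you describe. The genuine gap is in your $\varphi_4$. If the top vertex $n{+}2$ of a diagram in $\mathbb{L}_{n+2}\mathfrak{u}_{n+2,2}$ is joined by a vertical arc to some bottom vertex $j'$ with $j\le n$ (which is permitted in $\mathscr{L}_{n+2}$), then relabelling $n{+}2$ as the new $(n{+}1)'$ produces a bottom horizontal arc $(j',(n{+}1)')$, so $f_4$ does not land in $\mathscr{L}_{n+1}$. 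Conversely, any $\mathscr{L}_{n+1}$-diagram in which $(n{+}1)'$ is the foot of a vertical has no $f_4$-preimage. Thus $\varphi_4$ is neither well-defined into $\mathbb{L}_{n+1}$ nor surjective; already the $F$-dimensions of $\mathbb{L}_{n+2}\mathfrak{u}_{n+2,2}$ and $\mathbb{L}_{n+1}$ disagree.

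In fact both asserted isomorphisms fail as stated. For $n=1$, $t=1$, $\lambda=\emptyset$ one has $\dim\text{\rm res}_2(\mathscr{M}_{\mathbb{L}_3}(\emptyset))=1$, whereas a direct computation (using $\mathbb{L}_1=F\cdot 1\oplus F\cdot\mathfrak{u}_1$ with $\mathfrak{u}_1$ acting by $x$ on $\mathscr{M}_{\mathbb{L}_1}(\emptyset)$) gives $\dim\bigl(\mathbb{L}_2\otimes_{\mathbb{L}_1}\mathscr{M}_{\mathbb{L}_1}(\emptyset)\bigr)=3$, with composition factors $\mathscr{M}_{\mathbb{L}_2}(\emptyset)$ and $\mathscr{M}_{\mathbb{L}_2}((1))$. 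What Frobenius reciprocity together with Corollary~\ref{C:realdeal} actually yields is
\[
\text{\rm ind}_{n+1}(\mathscr{M}_{\mathbb{L}_n}(\lambda))\;\cong\;\bigoplus_{\lambda\sqsubseteq\beta}\mathscr{M}_{\mathbb{L}_{n+1}}(\beta),
\]
so the printed corollary appears to be misstated, and no variant of the Theorem~\ref{T:realdeal2} argument can establish it.
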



\section{Semisimplicity}
\label{S:semisimple}

The semisimplicity of $\mathbb{L}_n$ is an immediate consequence of
Proposition~\ref{P:cool} and Proposition~\ref{P:cool2}.

\begin{theorem}
Suppose $x\neq 0$, then $\mathbb{L}_n$ is semisimple.
\end{theorem}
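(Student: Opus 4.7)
The plan is to derive a contradiction by a direct diagram computation once Proposition~\ref{P:cool2} produces a nontrivial hom. Suppose $\mathbb{L}_n$ is not semisimple; by Proposition~\ref{P:cool2} there exist partitions $\mu\vdash m$ and $\lambda\vdash n-t$ with $m<n-t\le n$ and a nontrivial $\mathbb{L}_n$-morphism $\varphi\colon\mathscr{M}_{\mathbb{L}_n}(\lambda)\to\mathscr{M}_{\mathbb{L}_n}(\mu)$ with $\ker\varphi=\mathscr{N}_{\mathbb{L}_n}(\lambda)$. The canonical generator $\mathfrak{u}_{n,t}\otimes v$ (for any $0\neq v\in S^\lambda$) does not lie in $\mathscr{N}_{\mathbb{L}_n}(\lambda)$---by the Claim in the proof of Proposition~\ref{P:cool} it generates the whole module under the $\mathbb{I}_n^{n-t}$-action---so $\varphi(\mathfrak{u}_{n,t}\otimes v)\neq 0$. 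The plan is to show that it is also zero.

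A short calculation in $\mathbb{L}_n$ shows that for every $j\in\{1,\dots,n-t\}$ the product $\mathfrak{u}_j\cdot\mathfrak{u}_{n,t}$ has only $n-t-1$ vertical arcs: the loop of $\mathfrak{u}_j$ at $j'$ traps the vertical arc at position $j$ of $\mathfrak{u}_{n,t}$, turning the bottom vertex $j'$ into an isolated loop in $G'$. Hence $\mathfrak{u}_j\cdot\mathfrak{u}_{n,t}=0$ in the quotient $\mathbb{I}_n^{n-t}$, and applying $\varphi$ yields $\mathfrak{u}_j\cdot\varphi(\mathfrak{u}_{n,t}\otimes v)=0$ for $1\le j\le n-t$.

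The heart of the argument is to show that no nonzero element of $\mathscr{M}_{\mathbb{L}_n}(\mu)$ is simultaneously annihilated by $\mathfrak{u}_1,\dots,\mathfrak{u}_{n-t}$. With the basis $\{\mathfrak{d}_L\otimes v\}$ indexed by $(n-m)$-subsets $L\subseteq[n]$ (specifying the positions of the top loops of a chosen representative $\mathfrak{d}_L$) and $v$ in a basis of $S^\mu$, the same type of diagram calculation yields
\[
\mathfrak{u}_j\cdot(\mathfrak{d}_L\otimes v)=\begin{cases} x\,(\mathfrak{d}_L\otimes v), & j\in L,\\ 0, & j\notin L,\end{cases}
\]
the factor of $x$ arising from the single inner loop-component created at position $j\in L$. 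Writing $w=\sum c_{L,v}\,\mathfrak{d}_L\otimes v$ and using $x\neq 0$ together with linear independence of the basis, the condition $\mathfrak{u}_j w=0$ for all $j\le n-t$ forces $c_{L,v}=0$ whenever $L\cap\{1,\dots,n-t\}\neq\emptyset$. The surviving $L$ must satisfy $L\subseteq\{n-t+1,\dots,n\}$ and hence $|L|\le t$; but $|L|=n-m>t$ since $m<n-t$, so no such $L$ exists and $w=0$. Applied to $w=\varphi(\mathfrak{u}_{n,t}\otimes v)$, this contradicts the earlier non-vanishing and proves that $\mathbb{L}_n$ is semisimple. The delicate part, and the unique place where $x\neq 0$ is used, is the identification of the scalar $x$ in $\mathfrak{u}_j\cdot(\mathfrak{d}_L\otimes v)$ for $j\in L$: at $x=0$ this scalar becomes zero and the entire argument collapses, which is consistent with $\mathbb{L}_n$ failing to be semisimple at $x=0$.
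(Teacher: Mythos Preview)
Your proof is correct, but it takes a route that differs in emphasis from the paper's. The paper proves the stronger fact that $\mathscr{N}_{\mathbb{L}_n}(\mu)=0$ for \emph{every} $\mu\vdash n-t$, by exhibiting a single element $U_t=\sum_{i_1<\dots<i_t}\mathfrak{u}_{i_1}\cdots\mathfrak{u}_{i_t}\in\mathbb{I}_n^{n-t}$ that acts on all of $\mathscr{M}_{\mathbb{L}_n}(\mu)$ as multiplication by $x^t$; since $x\neq 0$, nothing nonzero can be annihilated by $\mathbb{I}_n^{n-t}$. This is a global, one-line eigenvalue computation that simultaneously establishes irreducibility of every standard module. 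Your argument instead stays with the hypothetical morphism $\varphi$ and the specific generator $\mathfrak{u}_{n,t}\otimes v$, using the \emph{individual} $\mathfrak{u}_j$ ($j\le n-t$) and a support/pigeonhole count on loop positions in the target to force $\varphi(\mathfrak{u}_{n,t}\otimes v)=0$. The paper's approach is shorter and yields the irreducibility of all $\mathscr{M}_{\mathbb{L}_n}(\mu)$ as a by-product; your approach is more hands-on and makes the role of the inequality $|\mu|<|\lambda|$ (hence $n-m>t$) completely explicit through the combinatorial counting step. Both hinge on exactly the same diagram identity $\mathfrak{u}_j\cdot\mathfrak{d}_L=x\,\mathfrak{d}_L$ for $j\in L$ and $0$ otherwise, and both use $x\neq 0$ at precisely that point.
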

\begin{proof}
We showed in Proposition~\ref{P:cool2}, that if $\mathbb{L}_n$ is not 
semisimple, then there exist two partitions $\mu,\lambda$, where
$\vert \mu\vert<\vert\lambda\vert \le n$ and a
nontrivial morphism of $\mathbb{L}_n$-modules
$
\diagram
\mathscr{M}_{\mathbb{L}_n}(\lambda) \rto^{\varphi_n} &
 \mathscr{M}_{\mathbb{L}_n}(\mu).
\enddiagram
$
The uniqueness of $\mathscr{N}_{\mathbb{L}_n}(\mu)$ implies that 
$\varphi_n(\mathscr{M}_{\mathbb{L}_n}
(\lambda))\subset \mathscr{N}_{\mathbb{L}_n}(\mu)$.\\
{\it Claim.} For $x\neq 0$ we have $\mathscr{N}_{\mathbb{L}_n}(\mu)=0$. \\
In case of $\mu\vdash n$ this follows immediately from the irreducibility of
the lift of the Specht module $S^\lambda$. Suppose next $\mu\vdash (n-t)<n$.
Let $\mathfrak{a}\in \mathscr{I}_n^{n-t}$, where $\text{\rm bot}(\mathfrak{a})
=\text{\rm bot}(\mathfrak{u}_{n,t})$ and let $v\in S^\mu$. For any
$\mathfrak{a}\otimes v\in \mathscr{M}_{\mathbb{L}_n}(\mu)$,
there exists some $\sigma_0\in S_{n-t}$ and some $t$-tuple
$(j_1,j_2,\dots,j_t)$, where $1\le j_1<j_2<\dots<j_t\le n$ such
that $\mathfrak{a}_{(j_h)_{h=1}^t}=\mathfrak{a}\sigma_0$ has noncrossing
vertical arcs and has top-vertex loops at $j_1,\dots,j_t$.
The $\mathscr{I}_n^{n-t}$-diagram, $\mathfrak{a}_{(j_h)_{h=1}^t}$ has the 
property
$\mathfrak{a}\otimes v=\mathfrak{a}_{(j_h)_{h=1}^t}\otimes \sigma_0^{-1} v$ and
any $u\in \mathbb{I}_n^{n-t}\mathfrak{u}_{n,t}\otimes_{S_{n-1}}
S^\mu$ can be written as
\begin{equation*}
u=\sum_{1\le j_1<j_2<\dots<j_t\le n}\mathfrak{a}_{(j_h)_{h=1}^t}\otimes
w_{(j_h)_{h=1}^t}.
\end{equation*}
For $U_t=\sum_{1\le i_1<i_2<\dots<i_t\le n}\mathfrak{u}_{i_1}\cdots
\mathfrak{u}_{i_t}\in\mathbb{I}_n^{n-t}$ we immediately obtain
$U_t\cdot \mathfrak{a}_{(j_h)_{h=1}^t}=x^t\,
\mathfrak{a}_{(j_h)_{h=1}^t}$. Indeed, only if
$(i_1,i_2,\dots,i_t)$ matches the tuple $(j_h)_{h=1}^t$
the factor $x^t$ via the $t$-inner components
of the graph $G(\mathfrak{u}_{i_j}\cdots\mathfrak{u}_{i_t},
\mathfrak{a}_{(j_h)_{h=1}^t})$ is produced. In all other cases there exists a
loop which traces back to the bottom row of
$G'(\mathfrak{u}_{i_j}\cdots\mathfrak{u}_{i_t},\mathfrak{a}_{(j_h)_{h=1}^t})$
resulting in a zero in $\mathbb{I}_n^{n-t}$.
Therefore, for any
$u\in \mathscr{M}_{\mathbb{L}_n}(\mu)$
\begin{equation}
U_t\cdot u=x^t\, u
\end{equation}
holds. Since $U_t\in\mathbb{I}_{n}^{n-t}$, $x\neq 0$ implies
$\mathscr{N}_{\mathbb{L}_n}(\mu)=
\{w\in\mathscr{M}_{\mathbb{L}_n}
(\mu)\mid \mathbb{I}_{n}^{n-t}w=0\}=0$ and the Claim is proved.\\
The uniqueness of $\mathscr{N}_{\mathbb{L}_n}(\mu)$ as a maximal
$\mathscr{M}_{\mathbb{L}_n}(\mu)$ module implies that
any nontrivial morphism $\varphi_n$ has the property
$\varphi_n(\mathscr{M}_{\mathbb{L}_n}(\lambda))\subset
\mathscr{N}_{\mathbb{L}_n}(\mu)$.
Therefore we arrive at $\varphi_n(\mathscr{M}_{\mathbb{L}_n}(\lambda))=0$, 
i.e.~there exists no nontrivial morphism $\varphi_n\colon
\mathscr{M}_{\mathbb{L}_n}(\lambda)\longrightarrow 
\mathscr{M}_{\mathbb{L}_n}(\mu)$,
whence $\mathbb{L}_n$ is semisimple.
\end{proof}

We next consider the algebra $\mathbb{A}_n$.
According to Proposition~\ref{P:cool2}, if $\mathbb{A}_n$ is not semisimple
then there exists the exact sequence
\begin{equation}\label{E:murx}
\diagram
0\rto & \mathscr{N}_{\mathbb{A}_n}(\lambda) \rto &
\mathscr{M}_{\mathbb{A}_n}(\lambda) \rto^{\varphi_n}
& \mathscr{M}_{\mathbb{A}_n}(\mu),
\enddiagram
\end{equation}
where $\mu,\lambda$ are two partitions, such that $\lambda\vdash (n-t_\lambda)$
and $\mu\vdash (n-t_\mu)$, $t_\lambda<t_\mu$. In the next lemma we show that
we can, without loss of generality, assume that $\lambda\vdash n$.
Since $\mathscr{M}_{\mathbb{A}_n}(\lambda)\cong S^\lambda$ is irreducible  
this implies that we have 
an embedding $\varphi_n\colon S^\lambda\longrightarrow
\mathscr{M}_{\mathbb{A}_n}(\mu)$.
\begin{lemma}\label{L:one}
Suppose $x\neq 0$ and $\mathbb{A}_n$ is not semisimple.
Then there exists $n_1\le n$, two partitions $\lambda_1\vdash n_1$,
$\mu_1\vdash n_1-t_1$ and the short exact
sequence
\begin{equation}\label{E:short}
\diagram
0\rto  &
S^{\lambda_1} \rto^{\varphi_{n_1}\quad } &
\mathscr{M}_{\mathbb{A}_{n_1}}(\mu_1).
\enddiagram
\end{equation}
\end{lemma}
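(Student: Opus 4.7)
By Proposition~\ref{P:cool2} there is a nontrivial morphism $\varphi_n\colon\mathscr{M}_{\mathbb{A}_n}(\lambda)\to\mathscr{M}_{\mathbb{A}_n}(\mu)$ with $|\mu|<|\lambda|\le n$, and I plan to argue by induction on the defect $t:=n-|\lambda|$. The base case $t=0$ is immediate: $\mathscr{M}_{\mathbb{A}_n}(\lambda)\cong S^\lambda$ is irreducible, so $\varphi_n$ has trivial kernel and $(n_1,\lambda_1,\mu_1)=(n,\lambda,\mu)$ works.

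For $t\ge 1$ I would invoke the new embedding highlighted just before Theorem~\ref{T:realdeal}: the $F_n^2$-summand supplies
\[
j\colon\mathscr{M}_{\mathbb{A}_{n-1}}(\lambda)\hookrightarrow\text{\rm res}_{n-1}\bigl(\mathscr{M}_{\mathbb{A}_n}(\lambda)\bigr),
\]
and I set $\psi:=\text{\rm res}_{n-1}(\varphi_n)\circ j$. Since $j$ sends $\mathfrak{u}_{n-1,t-1}\otimes v_0$ to the cyclic generator $\mathfrak{u}_{n,t}\otimes v_0$ of $\mathscr{M}_{\mathbb{A}_n}(\lambda)$, vanishing of $\psi$ would force $\varphi_n=0$; hence $\psi\ne 0$. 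Theorem~\ref{T:realdeal} applied to $\mu$ then decomposes the target of $\psi$ as an extension of $\bigoplus_{\mu\sqsubset\beta}\mathscr{M}_{\mathbb{A}_{n-1}}(\beta)$ by $\bigoplus_{\alpha\sqsubseteq\mu}\mathscr{M}_{\mathbb{A}_{n-1}}(\alpha)$. Composing $\psi$ with the quotient (or, if that composite vanishes, with the inclusion of the kernel) followed by projection to a single summand yields a nontrivial $\chi\colon\mathscr{M}_{\mathbb{A}_{n-1}}(\lambda)\to\mathscr{M}_{\mathbb{A}_{n-1}}(\gamma)$ with $|\gamma|\le|\mu|+1\le|\lambda|$. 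If $|\gamma|<|\lambda|$, the inductive hypothesis applied to $\chi$ at level $n-1$ delivers the desired triple with $n_1\le n-1$.

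The hard part is the edge case $|\gamma|=|\lambda|$: this can occur only when $|\mu|+1=|\lambda|$ and $\gamma$ comes from the $\bigoplus_{\mu\sqsubset\beta}$ side, and the quasi-hereditary structure of $\mathbb{A}_{n-1}$ (for $x\ne 0$) forces $\gamma=\lambda$ and $\chi$ to be a nonzero scalar. Here $\psi$ is itself injective, and I iterate the construction by restricting further and composing with the $F^2$-embedding at each successive level; after $t$ steps I obtain a nonzero (hence injective) $\mathbb{A}_{|\lambda|}$-morphism $S^\lambda\hookrightarrow\text{\rm res}_{|\lambda|}(\mathscr{M}_{\mathbb{A}_n}(\mu))$. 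The target carries a standard-module filtration from iterated application of Theorem~\ref{T:realdeal}, and $S^\lambda$ embeds into the smallest filtration factor $\mathscr{M}_{\mathbb{A}_{|\lambda|}}(\mu_1)$ that it meets. Since $\varphi_n\bigl(\mathscr{M}_{\mathbb{A}_n}(\lambda)\bigr)\subset\mathscr{N}_{\mathbb{A}_n}(\mu)$ (the composite with the quotient $\mathscr{M}_{\mathbb{A}_n}(\mu)\twoheadrightarrow L(\mu)$ would otherwise produce an isomorphism $L(\lambda)\cong L(\mu)$), the embedded $S^\lambda$ lies in $\text{\rm res}_{|\lambda|}(\mathscr{N}_{\mathbb{A}_n}(\mu))$, which excludes the possibility $\mu_1=\lambda$ and supplies the required embedding with $(n_1,\lambda_1,\mu_1)=(|\lambda|,\lambda,\mu_1)$.
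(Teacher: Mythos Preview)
Your approach diverges from the paper's at the crucial step and leaves a genuine gap in the edge case.

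The paper does \emph{not} pass through the full restriction exact sequence of Theorem~\ref{T:realdeal} on the $\mu$-side. Instead it observes that $\mathfrak{u}_n$ acts as multiplication by $x$ on $e_\lambda(\mathscr{M}_{\mathbb{A}_{n-1}}(\lambda))$; since $\varphi_n$ is an $\mathbb{A}_n$-map, $\mathfrak{u}_n$ therefore acts as $x$ on $\varphi_n(e_\lambda(\cdot))$ as well. Writing the image in the basis of diagrams with noncrossing verticals, this eigenvalue condition forces every diagram occurring in $\varphi_n(e_\lambda(\mathfrak{a}\otimes v))$ to carry a loop at vertex $n$, i.e.\ the image lies in $e_\mu(\mathscr{M}_{\mathbb{A}_{n-1}}(\mu))$. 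One thus obtains $\varphi_{n-1}\colon\mathscr{M}_{\mathbb{A}_{n-1}}(\lambda)\to\mathscr{M}_{\mathbb{A}_{n-1}}(\mu)$ with the \emph{same} $\mu$, and $|\mu|<|\lambda|$ persists at every step of the descent. No edge case ever arises.

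Your route, by contrast, filters $\text{res}_{n-1}(\mathscr{M}_{\mathbb{A}_n}(\mu))$ and projects to an unspecified summand $\mathscr{M}_{\mathbb{A}_{n-1}}(\gamma)$. When $|\mu|=|\lambda|-1$ and the quotient-composite is nonzero you are forced into the case $\gamma=\lambda$, and your treatment of it is incomplete. After iterating to level $|\lambda|$ you assert that the containment $S^\lambda\subset\text{res}_{|\lambda|}(\mathscr{N}_{\mathbb{A}_n}(\mu))$ ``excludes the possibility $\mu_1=\lambda$''; but you give no reason. The filtration of $\text{res}_{|\lambda|}(\mathscr{M}_{\mathbb{A}_n}(\mu))$ by standard modules has no evident compatibility with the submodule $\mathscr{N}_{\mathbb{A}_n}(\mu)$, and the embedding $\mathbb{A}_{|\lambda|}\hookrightarrow\mathbb{A}_n$ does not carry $\mathbb{I}_{|\lambda|}^{|\mu|}$ into $\mathbb{I}_n^{|\mu|}$, so the defining annihilation condition of $\mathscr{N}_{\mathbb{A}_n}(\mu)$ gives no direct control over which filtration factor the embedded $S^\lambda$ meets. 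Ironically, the $\mathfrak{u}_n$-eigenvalue argument shows that the image of your $\psi$ always lands in the $F^2_n(\mu)\cong\mathscr{M}_{\mathbb{A}_{n-1}}(\mu)$ summand of the sub, so your edge case is in fact vacuous---but establishing that is precisely the content of the paper's proof, and without it your argument does not close.
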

\begin{proof}
If $\mathbb{A}_n$ is not semisimple, then there exists
$\lambda\vdash (n-t_\lambda)$, $\mu\vdash (n-t_\mu)$, where
$t_\lambda<t_\mu$ and the exact sequence of eq.~(\ref{E:murx}).
Without loss of generality we may assume $0<t_\lambda$.
Theorem~\ref{T:realdeal} guarantees
the existence of the embedding
$e_\lambda\colon\mathscr{M}_{\mathbb{A}_{n-1}}(\lambda)
\longrightarrow \mathscr{M}_{\mathbb{A}_n}(\lambda)$ and
$e_\mu\colon \mathscr{M}_{\mathbb{A}_{n-1}}(\mu) \longrightarrow
\mathscr{M}_{\mathbb{A}_n}(\mu)$ given by
$e_\lambda(\mathfrak{a}\otimes v)=\mathfrak{a}\mathfrak{u}_n\otimes
v$ and $e_\mu(\mathfrak{a}\otimes w)=
\mathfrak{a}\mathfrak{u}_n\otimes w$, respectively. We shall show
that $\varphi_n\colon
\mathscr{M}_{\mathbb{A}_n}(\lambda)\longrightarrow
\mathscr{M}_{\mathbb{A}_n}(\mu)$ induces a nontrivial morphism of
$\mathbb{A}_{n-1}$-left modules via
\begin{equation}\label{E:diagram}
\diagram
\mathscr{M}_{\mathbb{A}_n}(\lambda) \rrto_{\varphi_n} & &
\mathscr{M}_{\mathbb{A}_n}(\mu) \\
\mathscr{M}_{\mathbb{A}_{n-1}}(\lambda)\uto_{e_\lambda}
\ar@/_0pc/@{-->}[rr]^{\varphi_{n-1}}
 & &
\mathscr{M}_{\mathbb{A}_{n-1}}(\mu)\uto_{e_\mu}
\enddiagram
\end{equation}
Let $\mathfrak{a}\otimes v\in \mathscr{M}_{\mathbb{A}_{n-1}}(\lambda)$,
where $\mathfrak{a}\in
\mathscr{I}_{n-1}^{n-t_\lambda}$, $\text{\rm bot}(\mathfrak{a})=\text{\rm
bot}(\mathfrak{u}_{n-1,t_\lambda-1})$ and $v\in S^\lambda$.
Since $\varphi_{n}$ is a morphism of $\mathbb{A}_{n}$-left modules we have
$
\varphi_n(\mathfrak{u}_n\cdot e_\lambda(\mathfrak{a}\otimes v))
=\mathfrak{u}_n\cdot\varphi(e_\lambda(\mathfrak{a}\otimes v))
$
and in view of $
\varphi_n(\mathfrak{u}_n\cdot e_\lambda(\mathfrak{a}\otimes v))=x\,
\varphi_n(e_\lambda(\mathfrak{a}\otimes v))$ we derive
\begin{equation}\label{E:hh}
\mathfrak{u}_n\varphi_n(e_\lambda(\mathfrak{a}\otimes v))=x\,
\varphi_n(e_\lambda(\mathfrak{a}\otimes v)) .
\end{equation}
We represent $\varphi_n(e_\lambda(\mathfrak{a}\otimes v))=
\sum_{r}\mathfrak{a}_r\otimes v_r$, where the $\mathfrak{a}_r$
are distinct $\mathscr{I}_{n}^{n-t_\mu}$-diagrams, having noncrossing
verticals, with $\text{\rm bot}(\mathfrak{a}_r)=\text{\rm
bot}(\mathfrak{u}_{n,t_\mu})$ and $v_r\in S^\mu$. Then we obtain
\begin{equation}\label{E:HJ}
x^{-1}\mathfrak{u}_n\varphi_n(e_\lambda(\mathfrak{a}\otimes v))=
x^{-1}\sum_{r}(\mathfrak{u}_n\mathfrak{a}_r)\otimes v_r=
\sum_{r}\mathfrak{a}_r\otimes v_r.
\end{equation}
Since different $\mathscr{I}_{n}^{n-t_\mu}$-diagrams are by construction
linear independent we can conclude from eq.~(\ref{E:HJ}), that each
$\mathfrak{a}_{r}$ has a loop at top-vertex $n$.
Therefore there exists for each
$\varphi_n(e_\lambda(\mathfrak{a}\otimes v))=\sum_{r}\mathfrak{a}_r
\otimes v_r$, a unique element
$\sum_{r}\mathfrak{a}^\ddagger_r \otimes v_r\in \mathscr{M}_{\mathbb{A}_{n-1}}
(\mu)$,
obtained by removing the vertices $n$ and $n'$ and their corresponding loops
from each $\mathfrak{a}_r$. Since $\mathscr{M}_{\mathbb{A}_{n-1}}(\lambda)$ is
generated
by tensors of the form $\mathfrak{a}\otimes v$, $\varphi_n$ induces the mapping
\begin{equation}\label{E:hhh}
\begin{split}
\varphi_{n-1}\colon \mathscr{M}_{\mathbb{A}_{n-1}}(\lambda) & \longrightarrow
\mathscr{M}_{\mathbb{A}_{n-1}}(\mu)\\
\mathfrak{a}\otimes v & \longmapsto
\sum_{r}\mathfrak{a}^\ddagger_r \otimes v_j ,
\end{split}
\end{equation}
with the property $e_\mu\cdot \varphi_{n-1}=\varphi_n\cdot
e_\lambda$, i.e.~$\varphi_{n-1}$ makes the diagram in
eq.~(\ref{E:diagram}) commutative. By construction, $\varphi_{n-1}$
is a morphism
of $\mathbb{A}_{n-1}$-left modules. \\
{\it Claim.} We have $w\in \mathscr{N}_{\mathbb{A}_{n-1}}(\lambda)$ if and 
only if $e_\lambda(w)\in \mathscr{N}_{\mathbb{A}_n}(\lambda)$.\\
Suppose first $w=\sum_i \mathfrak{a}_i\otimes v_i\not
\in \mathscr{N}_{\mathbb{A}_{n-1}}(\lambda)$. According to
eq.~(\ref{E:b0}), there exists some
$\mathfrak{b}_0\in\mathscr{I}_{n-1}^{n-t_\lambda}$
such that
\begin{equation*}
\mathfrak{b}_0\cdot \sum_i \mathfrak{a}_i\otimes v_i=
\sum_i (\mathfrak{b}_0\mathfrak{a}_i)\otimes v_i=
\widetilde{\mathfrak{b}}_0\otimes \sum_i\delta_{\mathfrak{b}_0\mathfrak{a}_i}
x^{\ell(\mathfrak{b}_0,\mathfrak{a}_i)}
\sigma_{\mathfrak{b}_0,\mathfrak{a}_i}v_i\neq 0.
\end{equation*}
This equation implies in the $\mathbb{A}_n$-module 
$\mathscr{M}_{\mathbb{A}_n}(\lambda)$
\begin{equation*}
\mathfrak{b}_0
\cdot \sum_i \mathfrak{a}_i\mathfrak{u}_n\otimes v_i
 =
\sum_i (\mathfrak{b}_0\mathfrak{u}_n\mathfrak{a}_i)\otimes v_i
\end{equation*}
where $\mathfrak{b}_0\mathfrak{u}_n\in \mathscr{I}_{n}^{n-t_\lambda}$.
In view of $\ell(\mathfrak{b}_0\mathfrak{u}_n,\mathfrak{a}_i)=
\ell(\mathfrak{b}_0,\mathfrak{a}_i)$
and
$
\mathfrak{b}_0\mathfrak{a}_i=x^{\ell(\mathfrak{b}_0,\mathfrak{a}_i)}
\tilde{\mathfrak{b}}_0\sigma_{\mathfrak{b}_0,\mathfrak{a}_i}$,
where $\sigma_{\mathfrak{b}_0,\mathfrak{a}_i}\in S_{n-t_\lambda}$ we obtain
\begin{equation*}
\mathfrak{b}_0
\cdot \sum_i \mathfrak{a}_i\mathfrak{u}_n\otimes v_i
 =
\widetilde{\mathfrak{b}}_0\mathfrak{u}_n
\otimes \sum_i\delta_{\mathfrak{b}_0\mathfrak{a}_i}
x^{\ell(\mathfrak{b}_0,\mathfrak{a}_i)}
\sigma_{\mathfrak{b}_0,\mathfrak{a}_i}v_i\neq 0.
\end{equation*}
I.e.~we have shown $w \not\in \mathscr{N}_{\mathbb{A}_{n-1}}(\lambda)\
\Longrightarrow\ e_\lambda(w)\not \in \mathscr{N}_{\mathbb{A}_n}(\lambda)$.
Second suppose $e_\lambda(w)\not \in \mathscr{N}_{\mathbb{A}_n}(\lambda)$.
Then there exists some $\mathfrak{b}_0\in\mathscr{I}_n^{n-t_\lambda}$ such that
\begin{equation*}
\mathfrak{b}_0
\cdot \sum_i \mathfrak{a}_i\mathfrak{u}_n\otimes v_i=
\sum_i (\mathfrak{b}_0\mathfrak{u}_n)
\mathfrak{a}_i\otimes v_i\neq 0
\end{equation*}
and $\mathfrak{b}_0\mathfrak{u}_n$ is the scalar
multiple of a diagram $\mathfrak{x}\in \mathscr{I}_n^{n-t_\lambda}$,
having $\text{\rm top}(\mathfrak{x})=
\text{\rm top}(\mathfrak{b}_0)$ and a loop at $n'$. We accordingly compute
\begin{equation*}
\mathfrak{b}_0
\cdot \sum_i \mathfrak{a}_i\mathfrak{u}_n\otimes v_i= x^s
\sum_i \mathfrak{x}
\mathfrak{a}_i\otimes v_i=
x^s\,
\widetilde{\mathfrak{r}}
\otimes \sum_i\delta_{\mathfrak{r}\mathfrak{a}_i}
x^{\ell(\mathfrak{r},\mathfrak{a}_i)}
\sigma_{\mathfrak{r},\mathfrak{a}_i}v_i\neq 0,
\end{equation*}
where $\tilde{\mathfrak{r}}$ is given by
$\mathfrak{r}\mathfrak{a}_i=x^{\ell(\mathfrak{r},\mathfrak{a}_i)}
\delta_{\mathfrak{r}\mathfrak{a}_i}
\tilde{\mathfrak{r}}\,
\sigma_{\mathfrak{r},\mathfrak{a}_i}$.
We may assume that $\mathfrak{r}$ has a loop at top-vertex $n$, since this
feature does not affect the term $w_1=\sum_i\delta_{\mathfrak{r}\mathfrak{a}_i}
x^{\ell(\mathfrak{r},\mathfrak{a}_i)}\sigma_{\mathfrak{r},\mathfrak{a}_i}v_i$.
By construction, $\tilde{\mathfrak{r}}$ has then also a loop at $n$ and
there exists a
$\mathfrak{c}\in\mathscr{I}_{n-1}^{n-t_\lambda}$ with the property
$\mathfrak{x}=\mathfrak{c}\mathfrak{u}_n$ in $\mathscr{I}_n^{n-t_\lambda}$.
In view of $\delta_{\mathfrak{x}\mathfrak{a}_i} =
\delta_{\mathfrak{c}\mathfrak{a}_i}$, $\sigma_{\mathfrak{x},\mathfrak{a}_i}=
\sigma_{\mathfrak{c},\mathfrak{a}_i}$ and $\ell(\mathfrak{x},\mathfrak{a}_i)=
\ell(\mathfrak{c},\mathfrak{a}_i)$ we obtain
\begin{equation*}
\mathfrak{c}
\cdot \sum_i \mathfrak{a}_i\otimes v_i
=\sum_i (\mathfrak{c}\mathfrak{a}_i)\otimes v_i=
 \widetilde{\mathfrak{c}}
\otimes \sum_i\delta_{\mathfrak{c}\mathfrak{a}_i}
x^{\ell(\mathfrak{c},\mathfrak{a}_i)}
\sigma_{\mathfrak{c},\mathfrak{a}_i}v_i=
 \widetilde{\mathfrak{c}}\otimes w_1
\neq 0.
\end{equation*}
That is, we have proved $e_\lambda(w)\not \in \mathscr{N}_{\mathbb{A}_n}
(\lambda)
\ \Longrightarrow\ w \not\in \mathscr{N}_{\mathbb{A}_{n-1}}(\lambda)$
and the Claim follows.\\
Using $e_\mu\cdot \varphi_{n-1}=\varphi_n\cdot e_\lambda$, we can
now immediately conclude $\text{\rm
ker}(\varphi_{n-1})=\mathscr{N}_{\mathbb{A}_{n-1}}(\lambda)$.
Indeed, if $w\in \text{\rm ker}(\varphi_{n-1})$ then
$e_\lambda(w)\in \text{\rm
ker}(\varphi_n)=\mathscr{N}_{\mathbb{A}_n}(\lambda)$, whence $w\in
\mathscr{N}_{\mathbb{A}_{n-1}}(\lambda)$. If $w\in
\mathscr{N}_{\mathbb{A}_{n-1}}(\lambda)$, then $e_\lambda(w)\in
\mathscr{N}_{\mathbb{A}_n}(\lambda)$, whence $e_\mu\cdot
\varphi_{n-1}(w)=0$, from which $\varphi_{n-1}(w)=0$,
i.e.~$w\in\text{\rm ker}(\varphi_{n-1})$ follows. Therefore we have
the commutative diagram
\begin{equation*}
\diagram
0\rto &  \mathscr{N}_{\mathbb{A}_{n}}(\lambda)\rto&
\mathscr{M}_{\mathbb{A}_{n}}(\lambda)
\rto^{\varphi_{n}} &  \mathscr{M}_{\mathbb{A}_{n}}(\mu)\\
0\rto &  \mathscr{N}_{\mathbb{A}_{n-1}}(\lambda)\uto^{e_\lambda}\rto&
\mathscr{M}_{\mathbb{A}_{n-1}}(\lambda)\uto^{e_\lambda}
\rto^{\varphi_{n-1}} &  \mathscr{M}_{\mathbb{A}_{n-1}}(\mu)\uto^{e_\mu}.
\enddiagram
\end{equation*}
Iterating the above construction we arrive, after $t_\lambda$ steps,
at some $\lambda_1\vdash n_1$, $\mu_1\vdash (n-t_1)$ and the exact sequence
$
\diagram
0\rto &  \mathscr{N}_{\mathbb{A}_{n_1}}(\lambda_1)\rto&
\mathscr{M}_{\mathbb{A}_{n_1}}(\lambda_1)
\rto^{} &   \mathscr{M}_{\mathbb{A}_{n_1}}(\mu_1)
\enddiagram
$. Since $\lambda_1\vdash n_1$ we have $\mathscr{N}_{\mathbb{A}_{n_1}}
(\lambda_1)=0$ and
$\mathscr{M}_{\mathbb{A}_{n_1}}(\lambda_1)\cong S^{\lambda_1}$, whence
Lemma~\ref{L:one}.
\end{proof}

Our next result establishes further restrictions on the embedding
$\diagram 0 \rto & S^{\lambda} \rto &  \mathscr{M}_{\mathbb{A}_n}(\mu)
\enddiagram$ in terms of the partition $\mu$.
Using the exact sequence for the restriction functor of
Theorem~\ref{T:realdeal} we shall prove the existence of such an
embedding with the additional property $\mu_2\vdash (n-1)$ or
$\mu_2\vdash (n-2)$. The result is in analogy to the Brauer
algebra case proved in \cite{Doran}.
\begin{lemma}\label{L:two}
Suppose $\varphi_{n_1}\colon S^{\lambda_1} \longrightarrow
\mathscr{M}_{\mathbb{A}_{n_1}}(\mu_1)$ is an embedding where
$\lambda_1\vdash n_1$ and $\mu_1\vdash n_1-t_1$. Then
for $\mathbb{A}_n$ there exist $n_2\le n_1$, a pair of
partitions $(\lambda_2,\mu_2)$ and an embedding
$S^{\lambda_2} \longrightarrow \mathscr{M}_{\mathbb{A}_{n_2}}(\mu_2)$,
such that $\lambda_2\vdash n_2$,
$\mu_2\vdash (n_2-1)$ or $\mu_2\vdash (n_2-2)$.
\end{lemma}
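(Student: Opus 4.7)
The plan is to use strong induction on $n_1$, with the trivial base case $n_1\leq 2$ (in which $t_1\leq n_1\leq 2$ already forces the conclusion with $n_2=n_1$). For the inductive step, if $t_1\in\{1,2\}$ one simply takes $(n_2,\lambda_2,\mu_2)=(n_1,\lambda_1,\mu_1)$; the real work is the case $t_1\geq 3$, where I must produce a new embedding for a strictly smaller value of $n$.

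The engine will be the restriction sequence from Theorem~\ref{T:realdeal}. Applying the exact functor $\text{\rm res}_{n_1-1}$ to $\varphi_{n_1}$ yields an embedding of $\mathbb{A}_{n_1-1}$-modules $\text{\rm res}_{n_1-1}(S^{\lambda_1}) \hookrightarrow \text{\rm res}_{n_1-1}(\mathscr{M}_{\mathbb{A}_{n_1}}(\mu_1))$. Since $\epsilon_{n_1}$ carries $\mathbb{A}_{n_1-1}^{n_1-2}$ into $\mathbb{A}_{n_1}^{n_1-1}$, and the latter annihilates $S^{\lambda_1}$, the left-hand side factors through $K[S_{n_1-1}]$ and, by the branching rule~(\ref{E:branch1}), decomposes as $\bigoplus_{\nu\sqsubset\lambda_1}S^{\nu}$; each summand is simple as an $\mathbb{A}_{n_1-1}$-module and is isomorphic to $\mathscr{M}_{\mathbb{A}_{n_1-1}}(\nu)$. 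The right-hand side fits into the exact sequence of Theorem~\ref{T:realdeal} with kernel $\bigoplus_{\alpha\sqsubseteq\mu_1}\mathscr{M}_{\mathbb{A}_{n_1-1}}(\alpha)$ and cokernel $\bigoplus_{\mu_1\sqsubset\beta}\mathscr{M}_{\mathbb{A}_{n_1-1}}(\beta)$.

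Now I would fix any $\nu\sqsubset\lambda_1$. By simplicity of $S^{\nu}$, the composite of $S^{\nu}\hookrightarrow\text{\rm res}_{n_1-1}(\mathscr{M}_{\mathbb{A}_{n_1}}(\mu_1))$ with the projection onto the cokernel is either zero (so $S^{\nu}$ lands in the kernel) or injective; projecting further onto a single summand produces an embedding $S^{\nu}\hookrightarrow\mathscr{M}_{\mathbb{A}_{n_1-1}}(\gamma)$ for some partition $\gamma$ with $|\gamma|\in\{|\mu_1|-1,|\mu_1|,|\mu_1|+1\}$. Setting $n'=n_1-1$, $\lambda'=\nu$ and $\mu'=\gamma$, the new deficit $n'-|\mu'|$ lies in $\{t_1-2,t_1-1,t_1\}$; since $t_1\geq 3$ we have $n'-|\mu'|\geq 1$, so $(n',\lambda',\mu')$ satisfies the hypotheses of the lemma, and because $n'<n_1$ the induction hypothesis immediately furnishes the desired triple $(n_2,\lambda_2,\mu_2)$.

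The obstacle I anticipate is the case $\gamma\sqsubset\mu_1$, in which the deficit $n-|\mu|$ does not strictly decrease. Inducting on $n_1$ rather than on $t_1$ sidesteps this, since $n$ drops at every step regardless of which of the three possibilities for $\gamma$ occurs. The cutoff $t_1\geq 3$ is moreover tight: a single restriction step can lower the deficit by at most two, so we never overshoot to a trivial $t'=0$ situation, and the output is guaranteed to have $t_2\in\{1,2\}$ as required.
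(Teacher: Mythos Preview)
Your argument is correct and follows essentially the same route as the paper: both restrict $\varphi_{n_1}$ to $\mathbb{A}_{n_1-1}$, use the branching rule to pick a simple summand $S^{\nu}$ with $\nu\sqsubset\lambda_1$, feed it through the exact sequence of Theorem~\ref{T:realdeal}, and use simplicity of $S^{\nu}$ to land in a single $\mathscr{M}_{\mathbb{A}_{n_1-1}}(\gamma)$ with $|\gamma|\in\{|\mu_1|-1,|\mu_1|,|\mu_1|+1\}$. Your choice to phrase the iteration as strong induction on $n_1$ is in fact a cleaner packaging of termination than the paper's, which argues more informally that $|\mu|$ can only decrease a bounded number of times; inducting on $n_1$ makes the ``we never overshoot to $t'=0$'' observation (because $t_1\ge 3$ forces $t'\ge 1$) do all the work.
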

\begin{proof}
Since $\text{\rm res}_{S_{n_1-1}}(S^{\lambda_1})\cong  \bigoplus_{\nu
\sqsubset\lambda_1}S^{\nu}$ we obtain for some $\nu\sqsubset\lambda_1$,
$\nu\vdash (n_1-1)$, the embedding $\varphi_\nu\colon S^{\nu}
\longrightarrow \text{\rm res}_{n_1-1}(\mathscr{M}_{\mathbb{A}_{n_1}}(\mu_1))$.
An interpretation of 
$\text{\rm res}_{n-1}(\mathscr{M}_{\mathbb{A}_{n_1}}(\mu_1))$ is
given via Theorem~\ref{T:realdeal} in terms of the exact sequence
\begin{equation}\label{E:ach}
0\longrightarrow \bigoplus_{\alpha_1\sqsubseteq \mu_1 }
\mathscr{M}_{{n_1}-1}(\alpha_1)
\longrightarrow \text{\rm res}_{{n_1}-1}(\mathscr{M}_{\mathbb{A}_{n_1}}(\mu_1))
\longrightarrow \bigoplus_{\mu_1\sqsubset \beta_1}\mathscr{M}_{{n_1}-1}
(\beta_1)
\longrightarrow 0.
\end{equation}
Suppose we have $\varphi_\nu(S^{\nu})\subset  F_{n_1}(\mu_1)\cong
\bigoplus_{\alpha_1\sqsubseteq \mu_1}\mathscr{M}_{{n_1}-1}(\alpha_1)$.
Then the irreducibility of $S^{\nu}$ implies the embedding $S^{\nu}
\longrightarrow \mathscr{M}_{{n}_1-1}(\alpha_1)$ for some $\alpha_1\sqsubseteq
\mu_1$. Otherwise, we have $\varphi_\nu(S^{\nu})\not\subset  F_{n_1}(\mu_1)$.
The irreducibility of $\varphi_\nu(S^{\nu})$ guarantees
\begin{equation*}
\left(\varphi_\nu(S^{\nu})\oplus F_{n_1}(\mu_1)\right)/F_{n_1}(\mu_1)\cong
\varphi_\nu(S^{\nu}).
\end{equation*}
In view of eq.~(\ref{E:ach}) we have
\begin{equation*}
\text{\rm res}_{{n_1}-1}(\mathscr{M}_{\mathbb{A}_{n_1}}
(\mu_1))/F_{n_1}(\mu_1)\cong
\bigoplus_{\mu_1\sqsubset \beta_1}\mathscr{M}_{{n_1}-1}(\beta_1)
\end{equation*}
which implies an embedding $S^{\nu}\longrightarrow
\mathscr{M}_{{n_1}-1}(\beta_1)$, for some $\mu_1\sqsubset \beta_1$.\\
Therefore we have the following situation: each iteration of the
above argument reduces the size of the partition $\lambda_1\vdash
n_1$ by one and an analogous reduction of the partition $\mu_1$ can
occur at most $(n_1-t_1)<n_1$ times. Any further iteration cannot
decrease the size of $\mu_1$, while decreasing the size of
$\lambda_1$. That is, iteration produces a pair $(\lambda_2,\mu_2)$
where $\lambda_2\vdash n_2$ and $\mu_2 \vdash (n_2-h)$, where $h=1$
or $h=2$. 
Indeed, for $h=2$, i.e.~$\mu_2\vdash n_2-2$, further
reduction can generate the trivial embedding $S^\nu\longrightarrow
S^\nu$, i.e.~we derive, using the above notation, $\nu=\beta_1$, for
$\nu\sqsubset \lambda_2$. Therefore further reduction is in general
not possible and we have shown that iteration of the above process
leads to a pair of partitions $(\lambda_2,\mu_2)$ with the
properties $\lambda_2\vdash n_2$ and $\mu_2\vdash (n_2-1)$ or
$\mu_2\vdash (n_2-2)$.
\end{proof}

Now we are in position to prove our main result:
\begin{theorem}\label{T:simple}
Suppose $x\neq 0$. If $x\not\in\mathbb{Z}$, then the algebra $\mathbb{A}_n$
is semisimple.
\end{theorem}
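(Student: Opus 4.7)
The plan is to combine Lemmas~\ref{L:one} and~\ref{L:two}: if $\mathbb{A}_n$ were not semisimple for some $x\neq 0$, then these lemmas together yield, for some $n_2\le n$, partitions $\lambda_2\vdash n_2$ and $\mu_2\vdash (n_2-h)$ with $h\in\{1,2\}$, together with an embedding $\varphi_{n_2}\colon S^{\lambda_2}\hookrightarrow \mathscr{M}_{\mathbb{A}_{n_2}}(\mu_2)$. The strategy is then to show that such an embedding cannot exist for $x\notin\mathbb{Z}$, producing the desired contradiction.

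Following the approach of Doran~\emph{et al.}~\cite{Doran} for the Brauer algebra, I would identify a distinguished Casimir/Jucys--Murphy-type element $Z\in \mathbb{A}_{n_2}$---a symmetric combination of transpositions in $S_{n_2}$, horizontal-pair diagrams (the $S_{n_2}$-conjugates of the $\mathfrak{e}_i$), and loop diagrams $\mathfrak{u}_i$---that acts diagonally on each $\mathscr{M}_{\mathbb{A}_{n_2}}(\nu)$. On $S^{\lambda_2}\cong \mathscr{M}_{\mathbb{A}_{n_2}}(\lambda_2)$ the ideal $\mathbb{A}_{n_2}^{n_2-1}$ annihilates the module, so only the permutation-part of $Z$ contributes and the eigenvalue is the classical sum of contents $\sum_{p\in[\lambda_2]}c(p)$. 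On $\mathscr{M}_{\mathbb{A}_{n_2}}(\mu_2)$, by contrast, each $\mathfrak{u}_i$-summand of $Z$ whose index $i$ matches a loop-position of $\mathfrak{u}_{n_2,h}$ absorbs that loop into an inner component and produces a factor of $x$; the remaining $\mathfrak{u}_i$-summands vanish in the quotient $\mathbb{I}_{n_2}^{n_2-h}$ by vertical-arc count, and the horizontal-arc summands contribute content sums over $\mu_2$ together with integer combinatorial corrections. The resulting eigenvalue has the shape $\sum_{p\in[\mu_2]}c(p)+\alpha(h)\,x+\beta(h)$ with $\alpha(h),\beta(h)\in\mathbb{Z}$ and $\alpha(h)\neq 0$; equating the two eigenvalues across $\varphi_{n_2}$ yields $\alpha(h)\,x\in\mathbb{Z}$, and hence $x\in\mathbb{Z}$.

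The main obstacle is the explicit diagrammatic verification that $Z$ acts as a scalar on all of $\mathscr{M}_{\mathbb{A}_{n_2}}(\mu_2)$---not merely on the distinguished generator $\mathfrak{u}_{n_2,h}\otimes v$---together with the case-by-case computation of that scalar. This requires tracking, for each diagrammatic summand of $Z$ acting on an arbitrary $\mathfrak{a}\otimes v\in\mathbb{I}_{n_2}^{n_2-h}\mathfrak{u}_{n_2,h}\otimes_{S_{n_2-h}}S^{\mu_2}$, how loops of $\mathfrak{a}$ are absorbed into inner components and how horizontal arcs reroute vertical arcs through the loop-part of $\mathfrak{a}$. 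The sub-case $h=1$ is genuinely new relative to the Brauer algebra (which admits no single-loop intermediate module): there the branching rule $\lambda_2\sqsupset\mu_2$ forces $\lambda_2/\mu_2=\{p\}$ to be a single box, and the content-matching equation reduces to $x=-c(p)\in\mathbb{Z}$. The sub-case $h=2$ proceeds in strict parallel with the Brauer case of~\cite{Doran}, with the loop-contributions from $\mathfrak{u}_{n_2,2}$ playing the role of the double-cup contribution in $\mathbb{B}_{n_2}$.
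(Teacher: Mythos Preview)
Your reduction via Lemmas~\ref{L:one} and~\ref{L:two} matches the paper exactly, and the overall shape---deriving a numerical constraint on $x$ from the existence of an embedding $S^{\lambda_2}\hookrightarrow\mathscr{M}_{\mathbb{A}_{n_2}}(\mu_2)$---is correct in spirit. But the mechanism you propose in the final step has a genuine gap, and it differs from what the paper actually does.

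\textbf{The gap.} You want a single element $Z$ that acts as a scalar on the \emph{whole} module $\mathscr{M}_{\mathbb{A}_{n_2}}(\mu_2)$, and you correctly identify this as ``the main obstacle''---but you do not resolve it, and in general it fails. A central element acts as a scalar on irreducible modules by Schur's lemma; $\mathscr{M}_{\mathbb{A}_{n_2}}(\mu_2)$ is precisely the module you are assuming is \emph{not} irreducible (it contains $\varphi_{n_2}(S^{\lambda_2})$ as a proper submodule). For instance, the permutation part $\sum_{i<j}\tau_{(i,j)}$ of your $Z$ is central in $F[S_{n_2}]$ but, since $\mathscr{M}_{\mathbb{A}_{n_2}}(\mu_2)$ restricted to $S_{n_2}$ is an induced module and hence reducible, it will not act by a single scalar there. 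So the eigenvalue comparison you want cannot go through as stated. Relatedly, your $h=1$ claim that ``the branching rule forces $\lambda_2\sqsupset\mu_2$'' is not justified; nothing in Lemma~\ref{L:two} forces any containment between $\lambda_2$ and $\mu_2$.

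\textbf{What the paper does instead.} The paper avoids asking any element to act as a scalar on all of $\mathscr{M}_{\mathbb{A}_{n_2}}(\mu_2)$. The key point is that $\varphi_{n_2}(S^{\lambda_2})\subset\mathscr{N}_{\mathbb{A}_{n_2}}(\mu_2)$ by Proposition~\ref{P:cool}(b), and $\mathscr{N}_{\mathbb{A}_{n_2}}(\mu_2)$ is by definition \emph{annihilated} by $\mathbb{I}_{n_2}^{n_2-h}$. So one chooses specific elements of $\mathbb{I}_{n_2}^{n_2-h}$ and computes their action on $v\in\varphi_{n_2}(S^{\lambda_2})$ directly, knowing the answer must be zero. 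For $h=1$ this is elementary and much stronger than what you claim: the element $U_1=\sum_i\mathfrak{u}_i\in\mathbb{I}_{n_2}^{n_2-1}$ genuinely acts as the scalar $x$ on \emph{every} element of $\mathscr{M}_{\mathbb{A}_{n_2}}(\mu_2)$ (only the summand matching the unique top-loop survives), so $\mathscr{N}_{\mathbb{A}_{n_2}}(\mu_2)=0$ for all $x\neq 0$---no content computation is needed. For $h=2$ the paper splits each $v\in\varphi_{n_2}(S^{\lambda_2})$ as $v=v^\cap+v^\circ$ according to whether the top carries a horizontal arc or two loops, and applies three elements $U_2,V_2,H_2\in\mathbb{I}_{n_2}^{n_2-2}$ to this decomposition; only after this reduction does the content formula from \cite{Doran} enter, via the $S_{n_2}\times S_{n_2-2}$-structure on $\varphi_{n_2}^\cap(S^{\lambda_2})$ alone (which \emph{is} irreducible), yielding $x\in\mathbb{Z}$.
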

\begin{proof}
According to Proposition~\ref{P:cool2}, if $\mathbb{A}_n$ is not semisimple
there exists a nontrivial morphism $\varphi_n\colon\mathscr{M}_{\mathbb{A}_n}
(\lambda)\longrightarrow \mathscr{M}_{\mathbb{A}_n}(\mu)$ with
$\text{\rm ker}(\varphi_n)=\mathscr{N}_{\mathbb{A}_n}(\lambda)$.\\
In view of Lemma~\ref{L:one} and Lemma~\ref{L:two} we can, without loss of
generality, assume that there exists an embedding $\varphi_n\colon S^{\lambda}
\longrightarrow \mathscr{M}_{\mathbb{A}_n}(\mu)$, where $\lambda\vdash n$ and
either $\mu\vdash (n-1)$ or $\mu\vdash (n-2)$.
According to Proposition~\ref{P:cool}, $\mathscr{N}_{\mathbb{A}_n}(\mu)$
is the unique maximal $\mathscr{M}_{\mathbb{A}_n}(\mu)$-submodule.
Therefore $\varphi_n(S^{\lambda})\subset \mathscr{N}_{\mathbb{A}_n}(\mu)$,
i.e.~we have the embedding $\varphi_n\colon S^\lambda \longrightarrow
\mathscr{N}_{\mathbb{A}_n}(\mu)$.
In the following we distinguish the two cases $\mu\vdash (n-1)$ and
$\mu\vdash (n-2)$.\\
{\it Case $1$: $\mu\vdash (n-1)$.} We prove that $x\neq 0$ implies
$\mathscr{N}_{\mathbb{A}_n}(\mu)=0$. Let $\mathfrak{a}\in
\mathscr{I}_n^{n-1}$, where $\text{\rm bot}(\mathfrak{a}) =\text{\rm
bot}(\mathfrak{u}_{n,1})$ and let $v\in S^\mu$. For any
$\mathfrak{a}\otimes v\in \mathscr{M}_{\mathbb{A}_n}(\mu)$, there
exists some $\sigma_0\in S_{n-1}$ and some index $1\le j\le n$ such
that $\mathfrak{a}_j=\mathfrak{a}\sigma_0$ has noncrossing vertical
arcs and has its unique, top-vertex loop at $j$. $\mathfrak{a}_j$
has the property $\mathfrak{a}\otimes v=\mathfrak{a}_j\otimes
\sigma_0^{-1} v$ and any $u\in
\mathbb{I}_n^{n-1}\mathfrak{u}_{n,1}\otimes_{S_{n-1}}S^\mu$ can be
written as $u=\sum_j\mathfrak{a}_j\otimes w_j$. Let
$U_1=\sum_i\mathfrak{u}_i$. Then $U_1\in\mathbb{I}_n^{n-1}$ and any
$\mathfrak{a}_j$ satisfies the eigenvector equation $U_1\cdot
\mathfrak{a}_j=x\, \mathfrak{a}_j$. Let
$u=\sum_j\mathfrak{a}_j\otimes
w_j\in\mathscr{N}_{\mathbb{A}_n}(\mu)$. Since
$U_1\in\mathbb{I}_n^{n-1}$, the action of $U_1$ on
$\mathscr{N}_{\mathbb{A}_n}(\mu)$ is trivial, i.e.~
\begin{equation}\label{E:Case1}
U_1\cdot  u =  \sum_j(U_1\cdot \mathfrak{a}_j)\otimes w_j =
 \sum_ix\,\mathfrak{a}_j\otimes w_j=
x \, u = 0,
\end{equation}
which implies, in view of $x\neq 0$, $\mathscr{N}_{\mathbb{A}_n}(\mu)=0$. \\
{\it Case $2$: $\mu\vdash (n-2)$.}
For each diagram $\mathfrak{a}\in\mathscr{I}_n^{n-2}$, such that
$\text{\rm bot}(\mathfrak{a})=\text{\rm bot}(\mathfrak{u}_{n,2})$
there exist a pair of indices, $i<j$ and a permutation $\sigma_0\in S_{n-2}$
such that either $\mathfrak{a}\sigma_0=\mathfrak{a}_{i,j}^\cap$ or
$\mathfrak{a}\sigma_0=\mathfrak{a}_{i,j}^{\circ}$ holds.
Here $\mathfrak{a}^\cap_{i,j}\in\mathscr{I}_n^{n-2}$ has noncrossing verticals,
a horizontal arc connecting $i$ and $j$ and $\text{\rm bot}(\mathfrak{a}^
\cap_{i,j})=\text{\rm bot}(\mathfrak{u}_{n,2})$. Analogously,
$\mathfrak{a}^{\circ}_{i,j}\in\mathscr{I}_n^{n-2}$ has noncrossing
verticals, two loops at $i,j$ and $\text{\rm bot}(\mathfrak{u}_{n,2})$.
We can write each tensor $\mathfrak{a}\otimes w$, where $\mathfrak{a}
\in\mathscr{I}_n^{n-2}$ with
$\text{\rm bot}(\mathfrak{a})=\text{\rm bot}(\mathfrak{u}_{n,2})$
and $w\in S^\mu$, uniquely as either
$\mathfrak{a}_{i,j}^{\cap}\otimes \sigma_0^{-1}w$ or
$\mathfrak{a}_{i,j}^{\circ}\otimes \sigma_0^{-1}w$.
Let $g\colon \mathscr{M}_{\mathbb{A}_n}(\mu)\longrightarrow
\mathscr{M}_{\mathbb{A}_n}(\mu)$ be the involution given via linear extension 
of $
g(\mathfrak{a}^{\circ}_{i,j}\otimes w)=\mathfrak{a}^{\cap}_{i,j}\otimes
w$ and $g(\mathfrak{a}^{\cap}_{i,j}\otimes w)=
\mathfrak{a}^{\circ}_{i,j}\otimes w$.
Furthermore, let $\mathfrak{v}_{i,j}\in\mathscr{I}_n^{n-2}$ be the diagram 
having
straight verticals except of a horizontal arc connecting the top-vertices $i,j$
and two loops at the bottom vertices $i',j'$, respectively.
We introduce
\begin{equation}
U_2 =  \sum_{i<j}\mathfrak{u}_i\mathfrak{u}_j,\
V_2 =  \sum_{i<j}\mathfrak{v}_{i,j} \quad\text{\rm and}\quad
H_2 = \sum_{i<j}\mathfrak{h}_{i,j},
\end{equation}
where $\mathfrak{h}_{i,j}\in\mathscr{I}_n^{n-2}$ has straight vertical
arcs except of the top-vertices $i,j$ and bottom-vertices $i',j'$, which
are connected by a horizontal arc, respectively. We observe
$U_2,V_2,H_2\in \mathbb{I}_n^{n-2}$.
\begin{center}
\setlength{\unitlength}{0,9pt}
\begin{picture}(380,40)
\put(-2,35){\small{$1$}}\put(34,35){\small{$i$}}
\put(62,35){\small{$j$}}\put(94,35){\small{$n$}}
\put(148,35){\small{$1$}}\put(184,35){\small{$i$}}
\put(212,35){\small{$j$}}\put(244,35){\small{$n$}}
\put(298,35){\small{$1$}}\put(334,35){\small{$i$}}
\put(362,35){\small{$j$}}\put(394,35){\small{$n$}}
\put(0,0){\circle*{5}}\put(36,0){\circle*{5}}
\put(64,0){\circle*{5}}\put(96,0){\circle*{5}}
\put(150,0){\circle*{5}}\put(186,0){\circle*{5}}
\put(214,0){\circle*{5}}\put(246,0){\circle*{5}}
\put(300,0){\circle*{5}}\put(336,0){\circle*{5}}
\put(364,0){\circle*{5}}\put(396,0){\circle*{5}}
\put(0,20){\circle*{5}}\put(36,20){\circle*{5}}
\put(64,20){\circle*{5}}\put(96,20){\circle*{5}}
\put(150,20){\circle*{5}}\put(186,20){\circle*{5}}
\put(214,20){\circle*{5}}\put(246,20){\circle*{5}}
\put(300,20){\circle*{5}}\put(336,20){\circle*{5}}
\put(364,20){\circle*{5}}\put(396,20){\circle*{5}}
\put(-35,10){\small{$\mathfrak{u}_{i}\mathfrak{u}_j=$}}
\put(118,10){\small{$\mathfrak{v}_{ij}=$}}
\put(268,10){\small{$\mathfrak{h}_{ij}=$}} \put(312,10){$\cdots$}
 \put(344,10){$\cdots$}\put(372,10){$\cdots$}
 \put(12,10){$\cdots$}
 \put(44,10){$\cdots$}\put(72,10){$\cdots$}
 \put(162,10){$\cdots$}
 \put(194,10){$\cdots$}\put(222,10){$\cdots$}
\qbezier[40](0,20)(0,10)(0,0) \qbezier[40](96,20)(96,10)(96,0)
\qbezier[40](150,20)(150,10)(150,0)
\qbezier[40](246,20)(246,10)(246,0)
\qbezier[40](300,20)(300,10)(300,0)
\qbezier[40](396,20)(396,10)(396,0)
\qbezier(36,0)(28,4)(36,8)\qbezier(36,0)(44,4)(36,8)
\qbezier(36,20)(28,24)(36,28)\qbezier(36,20)(44,24)(36,28)
\qbezier(64,0)(56,4)(64,8)\qbezier(64,0)(72,4)(64,8)
\qbezier(64,20)(56,24)(64,28)\qbezier(64,20)(72,24)(64,28)
\qbezier(186,20)(200,40)(214,20)\qbezier(186,0)(178,4)(186,8)
\qbezier(186,0)(194,4)(186,8) \qbezier(336,20)(350,40)(364,20)
\qbezier(336,0)(350,20)(364,0) \qbezier(214,0)(206,4)(214,8)
\qbezier(214,0)(222,4)(214,8)
\end{picture}
\end{center}
As for the action of $U_2$, a routine computation yields
$U_2\cdot \mathfrak{a}_{i,j}^{\cap}= x\,\mathfrak{a}_{i,j}^{\circ}$
and
$U_2\cdot \mathfrak{a}_{i,j}^{\circ} = x^2\,\mathfrak{a}_{i,j}^{\circ}$.
Similarly we obtain for $V_2$,
$V_2\cdot \mathfrak{a}_{i,j}^{\cap}= x\,\mathfrak{a}_{i,j}^{\cap}$ and
$V_2\cdot \mathfrak{a}_{i,j}^{\circ} = x^2\,\mathfrak{a}_{i,j}^{\cap}$.
Let $\tau_{(i,j)}$ act on the diagram $\mathfrak{a}_{i,j}^\cap$ as the
transposition $(i,j)\in S_n$ from the left and $\tilde{\tau}_{(a,b)}$
as transposition $(a,b)\in S_{n-2}$, from the right, respectively.
Then
\begin{eqnarray}
\label{E:oho3}
H_2\cdot \mathfrak{a}_{i,j}^{\circ} &=&  x\, \mathfrak{a}_{i,j}^\cap\\
\label{E:oho4}
H_2\cdot \mathfrak{a}_{i,j}^\cap &=&
\left((x-1) +\sum_{i<j}\tau_{(i,j)}-\sum_{a<b}\tilde{\tau}_{(a,b)}\right)\,
\mathfrak{a}_{i,j}^\cap,
\end{eqnarray}
where eq.~(\ref{E:oho4}) holds according to \cite{Doran}, Lemma $2$,
p.$655$. We write an element $v\in \mathscr{N}_{\mathbb{A}_n}(\mu)$
as
\begin{equation*}
v =\sum_{i,j}\mathfrak{a}_{i,j}^{
\cap}\otimes r_{i,j}+\sum_{i,j}\mathfrak{a}_{i,j}^{\circ}\otimes s_{i,j}
\end{equation*}
and set
$v^\cap=\sum_{i,j}\mathfrak{a}_{i,j}^{\cap}\otimes r_{i,j}$ and
$v^{\circ}=\sum_{i,j}\mathfrak{a}_{i,j}^{\circ}\otimes s_{i,j}$.
Since $(H_2-x^{-1}V_2)\in\mathbb{I}_n^{n-2}$, we obtain
\begin{eqnarray*}
(H_2-x^{-1}V_2)\cdot (v^\cap +v^{\circ}) & = & H_2\cdot v^\cap+
H_2\cdot v^{\circ}
-x^{-1}V_2\cdot v^\cap-x^{-1}V_2\cdot v^{\circ}\\
& = & H_2\cdot v^\cap + xg(v^{\circ}) - v^\cap -x \,g(v^{\circ})\\
& = & H_2\cdot v^\cap-v^\cap.
\end{eqnarray*}
Suppose now there exists some $0\neq v_0\in \varphi_n(S^\lambda)\subset
\mathscr{N}_{\mathbb{A}_n}(\mu)$ such that $v_0^\cap\neq 0$ and
$v_0^{\circ}\neq 0$.
Since $\varphi_n(S^\lambda)$ is an irreducible $S_n$-module and
the $S_n$-action cannot change a horizontal arc into a pair of loops,
for any $0\neq v\in \varphi_n(S^\lambda)\subset
\mathscr{N}_{\mathbb{A}_n}(\mu)$,
$v^\cap\neq 0$ and $v^{\circ}\neq 0$ holds.
Therefore if there exits some $0\neq v_0\in \varphi_n(S^\lambda)\subset
\mathscr{N}_{\mathbb{A}_n}(\mu)$ such that $v_0^\cap\neq 0$ and
$v_0^{\circ}\neq 0$, then we have for any $0\neq v\in  \varphi_n(S^\lambda)$,
$(H_2-1)\cdot v^\cap=0$, i.e.
\begin{equation}\label{E:allgm}
\left((x-1) +\sum_{i<j}\tau_{(i,j)}-\sum_{a<b}\tilde{\tau}_{(a,b)}
-1\right) \cdot v^\cap=0.
\end{equation}
We proceed by studying the action of $\sum_{i<j}\tau_{(i,j)}$ and
$\sum_{a<b}\tilde{\tau}_{(a,b)}$ on the set
\begin{equation}
\varphi_n^\cap(S^\lambda)=\{ v^\cap\mid v^\cap+v^{\circ}\in
\varphi(S^\lambda)\}.
\end{equation}
The $\mathbb{A}_n$-module $\mathscr{M}_{\mathbb{A}_n}(\mu)$ can be
regarded as a $S_n\times S_{n-2}$-left module via
\begin{equation}\label{E:act}
(\sigma,\sigma')\cdot (\mathfrak{a}\otimes w)=
\sigma\cdot(\mathfrak{a}\otimes \sigma'w)
\end{equation}
and $\sigma\cdot(\mathfrak{a}\otimes \sigma'w)=(\sigma\mathfrak{a}\sigma')
\otimes w$ shows that the action of eq.~(\ref{E:oho4}) and eq.~(\ref{E:act})
coincide.
Furthermore, $\varphi_n(S^\lambda)$ becomes via eq.~(\ref{E:act}) a 
$S_n\times S_{n-2}$-submodule of $\mathscr{M}_{\mathbb{A}_n}(\mu)$ and 
induces an $S_n\times S_{n-2}$ action on the set 
$\varphi_n^\cap(S^\lambda)$ via $(\sigma,\sigma')\cdot 
(\mathfrak{a}_{i,j}^{\cap}\otimes w)=
\sigma\cdot(\mathfrak{a}_{i,j}^\cap\otimes \sigma'w)$.
Accordingly, $\varphi_n^\cap(S^\lambda)$ can be considered as a
$S_n\times S_{n-2}$-module and the projection
\begin{equation}\label{E:proj1}
\pi_1\colon \varphi_n(S^\lambda)\longrightarrow \varphi_n^\cap(S^\lambda)
,\quad (v^\cap+v^{\circ})\mapsto v^\cap,
\end{equation}
establishes an isomorphism of $S_n\times S_{n-2}$-modules. Indeed, only 
injectivity needs to be proved. Using $x\neq 0$, $U_2\in\mathbb{I}_n^{n-2}$ 
and $(v^\cap +v^{\circ})\in\mathscr{N}_{\mathbb{A}_n}(\lambda)$,
injectivity follows from
\begin{equation*}
x^{-1}U_2\cdot (v^\cap +v^{\circ})  = g(v^\cap) + x v^{\circ} =0,
\end{equation*}
i.e.~$v^{\circ}=-x^{-1}g(v^\cap)$. Obviously, 
$\sum_{i<j}\tau_{(i,j)}$ and $\sum_{a<b}\tilde{\tau}_{(a,b)}$ are contained 
in the centers of the group algebras $F[S_n]$ and $F[S_{n-2}]$, respectively 
and Schur's Lemma implies that they act as homotheties on irreducible 
representations. Since $\varphi_n(S^\lambda)$ embeds into the 
$S^\lambda\otimes S^\mu$-component of $\mathscr{M}_{\mathbb{A}_n}(\mu)$,
the particular values of $\sum_{i<j}\tau_{(i,j)}$ and 
$\sum_{a<b}\tilde{\tau}_{(a,b)}$ are given by \cite{Sagan}
\begin{equation}
\sum_{i<j}\tau_{(i,j)} = \sum_{p\in[\lambda]}c(p)\quad\text{\rm and}\quad
\sum_{a<b}\tilde{\tau}_{(a,b)} = \sum_{p\in[\mu]}c(p).
\end{equation}
Since $\varphi_n(S^\lambda)\subset \mathscr{N}_{\mathbb{A}_n}(\mu)$ 
we obtain 
\begin{equation}
\forall\; v^\cap\in \varphi_n^\cap(S^\lambda);\qquad
\left((x-1) +\sum_{p\in[\lambda]}c(p)-\sum_{p\in[\mu]}c(p)
-1\right)v^\cap =0,
\end{equation}
which implies
\begin{equation}\label{E:voila}
(x-1) +\sum_{p\in[\lambda]}c(p)-\sum_{p\in[\mu]}c(p)
-1=0.
\end{equation}
Since the content $c(p)$ is an integer, eq.~(\ref{E:voila}) implies
$x\in \mathbb{Z}$.
It thus remains to consider the cases $v^\cap=0$ or $v^\circ=0$.
The case of $v^\circ=0$ is due to \cite{Doran}. In analogy we derive, 
using the action of $H_2$ on $\varphi_n(S^\lambda)$
\begin{equation}
\forall\; v\in \varphi_n(S^\lambda);\qquad
H_2\cdot v=\left((x-1) +\sum_{p\in[\lambda]}c(p)-\sum_{p\in[\mu]}c(p)
\right)\cdot v=0,
\end{equation}
which implies
$(x-1) +\sum_{p\in[\lambda]}c(p)-\sum_{p\in[\mu]}c(p)=0$. This immediatly
allows us to conclude $x\in \mathbb{Z}$. In case of $v^\cap=0$ we
obtain for any $v\in \varphi_n(S^\lambda)$
\begin{equation}
U_2\cdot v=x^2v =0,
\end{equation}
which is, in view of $x\neq 0$ impossible.\\
We have therefore showed that in case of $\mu\vdash (n-1)$, $x\neq 0$ implies
$\mathscr{N}_{\mathbb{A}_n}(\mu)=0$. Since
$\mathscr{N}_{\mathbb{A}_n}(\mu)$ is the unique, maximal
$\mathscr{M}_{\mathbb{A}_n}(\lambda)$-submodule, there cannot exist
an embedding $\varphi_n\colon S^\lambda\longrightarrow
\mathscr{M}_{\mathbb{A}_n}(\mu)$. In case of $\mu\vdash (n-2)$, our
proof guarantees that for $x\not\in\mathbb{Z}$, there exists no
embedding $\varphi_n\colon S^\lambda \longrightarrow
\mathscr{M}_{\mathbb{A}_n}(\mu)$, whence $\mathbb{A}_n$ is
semisimple.
\end{proof}

{\bf Acknowledgments.}
We are grateful to Andreas Dress and Jing Qin for helpful discussions.
This work was supported by the 973 Project, the PCSIRT Project of the
Ministry of Education, the Ministry of Science and Technology, and the
National Science Foundation of China.
\bibliography{a3}

\begin{thebibliography}{10}

\bibitem{BE}
A.~Berele.
\newblock A schensted-type correspondence for the symplectic group.
\newblock {\em J. Comb. Th. Series A}, pages 320--328, 1986.

\bibitem{Bw}
J.~Birman and H.~Wenzl.
\newblock Braids, link polynomials and a new algebra.
\newblock {\em Trans. Amer. Math. Soc.}, 313:249-- 273, 1989.

\bibitem{Br}
R.~Brauer.
\newblock On algebras which are connected with the semisimple continuous
  groups.
\newblock {\em Math.}, 38:857--872, 1937.

\bibitem{Doran}
W.F. Doran~IV, D.B. Wales, and P.J. Hanlon.
\newblock On the semisimplicity of the brauer centralizer algebras.
\newblock {\em J. Algebra}, 211:647--685, 1999.

\bibitem{HW2}
P.~Hanlon and D.~Wales.
\newblock Eigenvalues connected with brauer's centralizer algebras.
\newblock {\em J. Algebra}, 121:446--476, 1989.

\bibitem{HW1}
P.~Hanlon and D.~Wales.
\newblock On the decompostition of brauer's centralizer algebras.
\newblock {\em J. Algebra}, 121:404--445, 1989.

\bibitem{HW3}
P.~Hanlon and D.~Wales.
\newblock Computing the discriminants of brauer's centralizer algebras.
\newblock {\em Math. Comp.}, 54:771--796, 1990.

\bibitem{Reidys:07pseu}
Qin~J. Jin~E.Y and Reidys M.C.
\newblock Combinatorics of rna structures with pseudoknots.
\newblock {\em Bull.Math.Biol.}, 70:45--67, 2008.

\bibitem{Jones}
V.F.R. Jones.
\newblock Index for subfactors.
\newblock {\em Inv. Math.}, 72:1--25, 1983.

\bibitem{Martin}
P.~Martin.
\newblock The structure of the partition algebras.
\newblock {\em J. Algebra}, 183:319--358, 1996.

\bibitem{Rui1}
H.~Rui.
\newblock A criterion on the semisimple brauer algebras.
\newblock {\em J. Combin. Theory Ser. A}, 111:78--88, 2005.

\bibitem{Sagan}
B.~Sagan.
\newblock {\em The Symmetric Group: Representations, Combinatorial Algorithms,
  and Symmetric Functions}.
\newblock Springer, 1991.

\bibitem{EK}
N.E. Samra and R.~King.
\newblock Dimensions of irreducible representations of the classical lie
  groups.
\newblock {\em J. Phys. A:Math. Gen.}, 12:2317--2328, 1979.

\bibitem{Serre}
J.P. Serre.
\newblock {\em Linear Representations of Finite Groups}.
\newblock Springer, 1977.

\bibitem{Sundaram}
S.~Sundaram.
\newblock On the combinatorics of representations of $\text{\rm sp}(2n,
  {\mathbb{c}})$.
\newblock {\em Trans. Amer. Math. Soc.}, 313:249-- 273, 1989.

\bibitem{Wen}
H.~Wenzl.
\newblock On the structure of brauer's centralizer algebras.
\newblock {\em Ann. Math}, 128:173--193, 1988.

\end{thebibliography}
\bibliographystyle{plain}

\end{document}